\newtheorem{mthm}{Main Theorem}
\newtheorem{lemma}{Lemma}
\newtheorem{theorem}{Theorem}
\newtheorem{remark}{Remark}
\newtheorem{proposition}{Proposition}
\newtheorem{definition}{Definition}
\newtheorem{corollary}{Corollary}
\newtheorem{question}{Question}
\newtheorem{problem}{Problem}
\def\<#1>{\left\langle #1 \right\rangle}
\newcommand{\Apart}[1]{\ensuremath{\mathcal{A}^{#1}}}
\newcommand{\Bpart}[1]{\ensuremath{\mathcal{B}^{#1}}}
\newcommand{\w}{\ensuremath{\mathrm{w}}}
\newcommand{\FP}{\ensuremath{\Pi}}
\newcommand{\axisDir}{\ensuremath{\mathrm{A}}}
\newcommand{\angleFP}{\ensuremath{\theta}}
\renewcommand{\a}{\ensuremath{\mathrm{a}}}
\newcommand{\V}{\ensuremath{\mathcal{V}}}
\newcommand{\ci}{\ensuremath{\mathrm{i}\,}}
\newcommand{\qi}{\ensuremath{\mathbbm{i}}}
\newcommand{\qj}{\ensuremath{\mathbbm{j}}}
\newcommand{\qk}{\ensuremath{\mathbbm{k}}}
\newcommand{\N}{\ensuremath{\mathbbm{N}}}
\newcommand{\Z}{\ensuremath{\mathbbm{Z}}}
\newcommand{\R}{\ensuremath{\mathbbm{R}}}
\newcommand{\C}{\ensuremath{\mathbbm{C}}}
\newcommand{\Q}{\ensuremath{\mathbbm{Q}}}
\renewcommand{\H}{\ensuremath{\mathbbm{H}}}
\renewcommand{\S}{\ensuremath{\mathbbm{S}}}
\renewcommand{\Re}{\ensuremath{\mathrm{Re}}}
\newcommand{\Imc}{\ensuremath{\mathrm{Im}_{\C}}}
\newcommand{\Imq}{\ensuremath{\mathrm{Im}_{\H}}}
\author{\normalsize Alexander I. Bobenko, Tim Hoffmann, Andrew O. Sageman-Furnas}
\date{\normalsize \today}
\title{Compact Bonnet Pairs: isometric tori with the same curvatures}
\begin{document}
\maketitle

\begin{abstract}
We explicitly construct a pair of immersed tori in three dimensional Euclidean space that are related by a mean curvature preserving isometry. These Bonnet pair tori are the first examples of compact Bonnet pairs.

This resolves a longstanding open problem on whether the metric and mean curvature function determine a unique smooth compact immersion.

Moreover, we prove these isometric tori are real analytic. This resolves a second longstanding open problem on whether real analyticity of the metric already determines a unique compact immersion.

Our construction uses the relationship between Bonnet pairs and isothermic surfaces. The Bonnet pair tori arise as conformal transformations of an isothermic torus with one family of planar curvature lines. We classify such isothermic tori in our companion paper~\cite{short-isothermic-planar}.

The above approach stems from computational investigations of a $5\times7$ quad decomposition of a torus using a discrete differential geometric analog of isothermic surfaces and Bonnet pairs.

\end{abstract}

\tableofcontents


\section{Introduction}
\label{sec:intro}
\begin{figure}[tbh!]
\includegraphics[width= \textwidth]{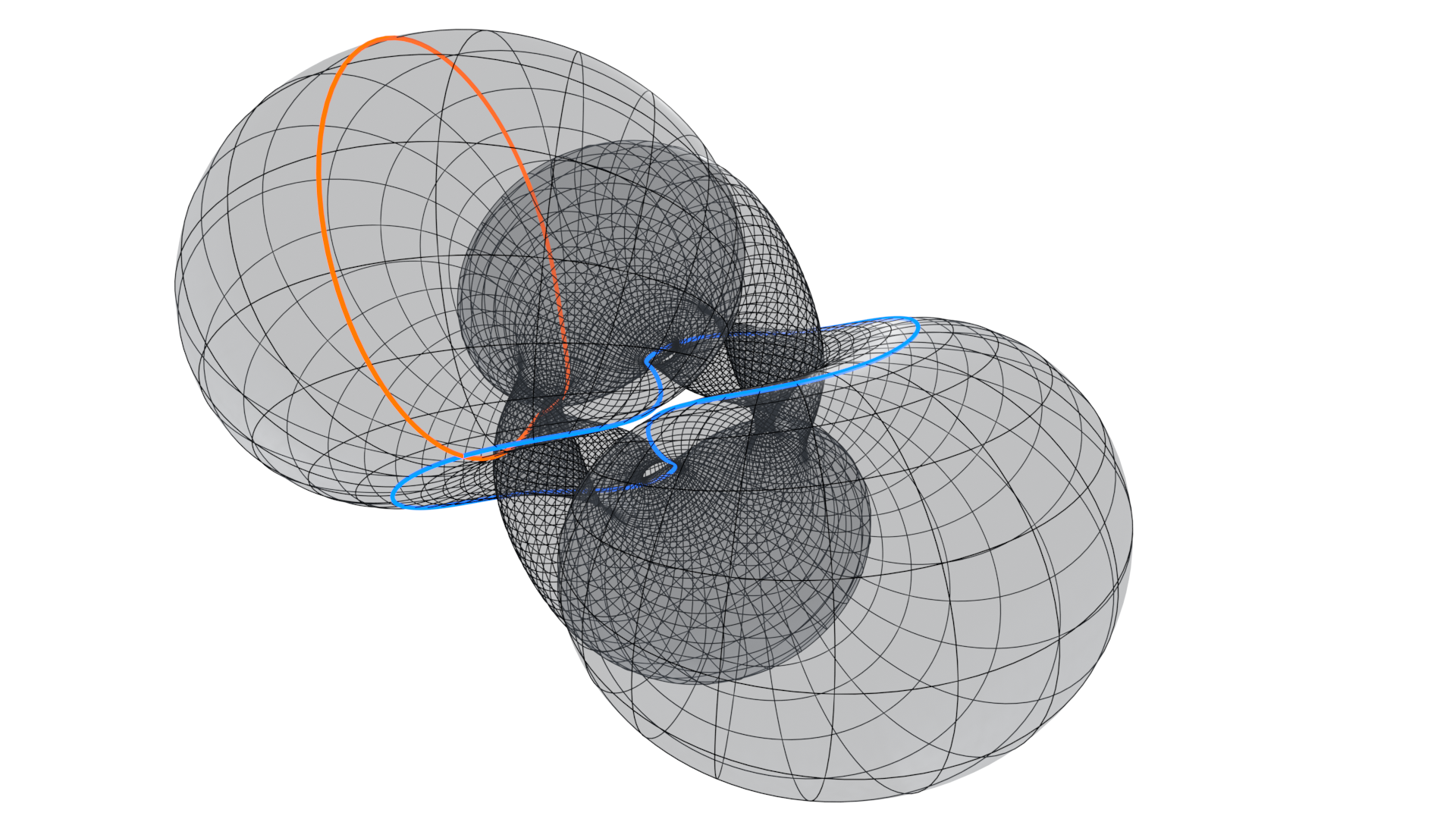}\\
\includegraphics[width= \textwidth]{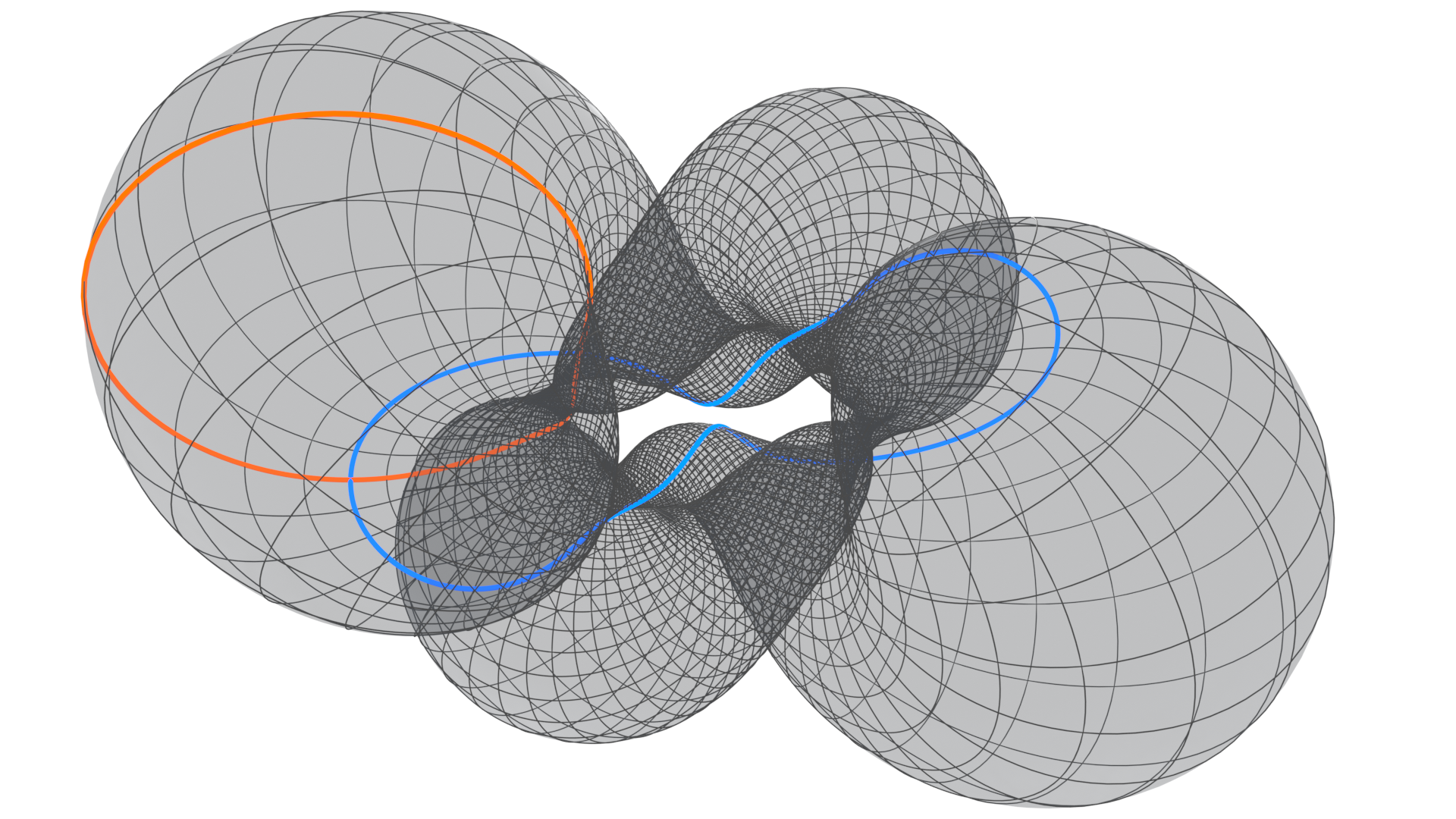}
	\caption{A compact Bonnet pair: two non-congruent immersed real analytic tori that are related by a mean curvature preserving isometry. Corresponding generators are shown on each surface in orange and blue. Note that the two large bubbles on the left are closer together than the corresponding bubbles on the right, and both surfaces have $180^\circ$ rotational symmetry. These Bonnet pair tori arise as conformal transformations from an isothermic torus with one family of planar curvature lines. The orange generators come from the planar curves and are therefore congruent, while the blue generators are not congruent.}
	\label{fig:teaserNonsphericalPlusMinus}
\end{figure}

A smooth orientied surface immersed in three-dimensional Euclidean space is analytically described by a metric and second fundamental form. The latter is a symmetric bilinear form with respect to the metric, and its determinant and trace are the Gauss curvature and twice the mean curvature, respectively. Two immersions are \emph{congruent} if they are related by an orientation-preserving ambient isometry, i.e., a rigid motion. The classical Bonnet theorem is that a metric and second fundamental form satisfying the Gauss--Codazzi compatibilty equations determine an immersion that, up to congruence, is unique.

When is a reduced set of geometric data sufficient for uniqueness?

The metric determines the Gauss curvature, so in 1867 Bonnet asked if a surface can instead be characterized by a metric and mean curvature function~\cite{Bonnet}. Generically, the answer is yes, but there are important exceptions. These include constant mean curvature surfaces, like the textbook example (see \cite{Hoffman-Karcher}) of the isometry between the helicoid and catenoid minimal surfaces, both of which have vanishing mean curvature.

In 1981, Lawson and Tribuzy proved that for each smooth metric and non-constant mean curvature function there exist at most two compact smooth immersions~\cite{LawTri}. Moreover, they showed there is at most one immersion of a compact surface with genus zero (see Corollary \ref{Bonnet_spheres} below). They emphasized the following remained unanswered:

\begin{problem}[Global Bonnet Problem]
Do there exist two non-congruent compact smooth ($C^\infty$) immersions in three-dimensional Euclidean space that are related by an isometry with the same mean curvature at corresponding points?
\end{problem}

On the other hand, sometimes a compact immersion is uniquely determined by the metric alone. In 1927 Cohn-Vossen proved that two isometric compact analytic ($C^\omega$) surfaces that are convex
must be congruent~\cite{CohnVossen1927}. A similar statement can also hold for non-convex analytic surfaces. For example, an analytic surface isometric to a circular torus of revolution must be congruent to it (a special case of A.D. Alexandrov's uniqueness result on tight analytic immersions~\cite{zbMATH03032753}). 

In 1929, Cohn-Vossen constructed two isometric compact 
surfaces that are nowhere locally congruent (compare to  Remark~\ref{rem:IsometricFromFlatExamples}) but had to drastically reduce the regularity from analytic to class $C^2$~\cite{CohnVossen1929}. In 2010, Marcel Berger highlighted that Cohn-Vossen's analytic non-convex metric uniqueness question remains open. It is the first unsolved problem Berger states in the section ``What we don't entirely know how to do for surfaces'' of his beautiful book Geometry Revealed~\cite[Section VI.9, pp. 386--387]{BergerBook2010}.

\begin{problem}[Cohn-Vossen--Berger Problem]
	\label{problem:cvb}
	Do there exist two isometric compact immersions in Euclidean three-space that are analytic ($C^\omega$) but not related by an ambient isometry?
\end{problem}

Note the Cohn-Vossen--Berger problem asks for uniqueness up to ambient isometry, i.e., rigid motions and reflections.

\begin{remark}
	\label{rem:IsometricFromFlatExamples}
	The $C^\infty$ version of Problem~\ref{problem:cvb} is also mostly unexplored. As far as we know, the only known examples of a pair of isometric compact $C^\infty$ surfaces not related by ambient isometry are constructed by locally altering a smooth compact surface with a flat (zero Gaussian curvature) region. One smoothly attaches a bump either outward or inward, respectively, in place of the flat region, see~\cite[Sec. 5-2, Fig. 5-1]{doCarmoBook} and ~\cite[Chapter 12, pp. 209--211]{SpivakVol5}). The isometry is thus a congruence away from the locally altered regions.
\end{remark}
We explicitly construct genus one, i.e., tori, examples to the Global Bonnet Problem. A numerical example is shown in Figure~\ref{fig:teaserNonsphericalPlusMinus}.

\begin{mthm}
	\label{thm:main1}
	There exist two non-congruent smooth tori in $\R^3$ that are related by a mean curvature preserving isometry.
\end{mthm}

We prove there are uncountably many such pairs, as their construction has a functional parameter. Moreover, our methods lead to immersions that are real analytic and generically lead to pairs that are not related by an ambient isometry. We therefore simultaneously resolve both the Global Bonnet Problem and the Cohn-Vossen--Berger Problem.

\begin{mthm}
	\label{thm:main2}
	There exist two isometric analytic tori in $\R^3$ not related by an ambient isometry.
\end{mthm}

Both Main Theorems follow directly from Theorem~\ref{thm:Bonnet_pair_two} in Section~\ref{sec:Bonnet_two_surfaces}.

\begin{remark}
Since our main focus is the Global Bonnet Problem, the tori we construct correspond via a mean curvature preserving isometry that is nowhere locally a congruence. In other words, every pair of corresponding neighborhoods are non-congruent. This is in stark contrast to the only previously known $C^\infty$ examples of isometric compact immersions not related by an ambient isometry, as in Remark~\ref{rem:IsometricFromFlatExamples}.
\end{remark}

\subsection{Background}
There are various problems about the existence and uniqueness of immersions with some prescribed data drawn from the metric and second fundamental form. In \cite{Cartan4} Cartan gives a good overview of such local problems including immersions with prescribed metric and either second fundamental form~\cite{Cartan3} or Weingarten operator ~\cite{Bryant2001}. The existence of an isometric immersion, i.e., with arbitrary prescribed metric, is a challenging question with vast, complicated literature~\cite{HanHongBook, Gromov2017}. Our focus is on uniqueness up to congruence of surfaces already in $\R^3$, with an emphasis on Bonnet's problem.

A generic surface is locally determined by its metric and mean curvature function (see \cite{Car1942}). Bonnet knew that there are three exceptional cases~\cite{Bonnet}: constant mean curvature surfaces, Bonnet families, and Bonnet pairs.
\begin{enumerate}
	\item {\bf Constant mean curvature surfaces.} Every constant mean curvature surface is part of a one-parameter associated family of isometric surfaces with the same constant mean curvature.
	
	Global theories for constant mean curvature surfaces are an active field of research. Techniques for constructing complete and embedded examples span from integrable systems~\cite{PS1989,Hitchin1990,Bob_CMC1991} and (generalized) Weierstrass representations \cite{HoffmanTraizetWhite2016, DPW} to geometric analysis~\cite{Kapouleas1990,KorevaarKusner93} . Recently, these approaches are starting to be blended together \cite{Traizet2020}.
	
	As far as we know, it remains an open question if the associated family of a compact constant mean curvature surface contains a second, non-congruent compact immersion.
	
	\item {\bf Bonnet families.} There exists a finite dimensional space of non-constant mean curvature surfaces that exhibit a one-parameter family of isometric deformations preserving principal curvatures. The local classification of such families was obtained in \cite{Haz1887, Car1942, Che1985}. The global classification of Bonnet families was obtained in \cite{BE_Painleve2000} using techniques from the theory of Painlev\'e equations and isomonodromic deformations. In particular it was shown that surfaces in Bonnet families cannot be compact.
	
	\item {\bf Bonnet pairs.} A Bonnet pair is two non-congruent immersions $f^+$ and $f^-$ with the same metric and mean curvature function. If a third immersion exists that is isometric to the other two and has the same mean curvature function, then an entire one-parameter family must exist. These families are further classified depending on whether the mean curvature is constant or not (see above).
	
	Global results for compact Bonnet pairs have focused on uniqueness, i.e., non-existence of a pair. For example, a compact surface of revolution is uniquely determined by its metric and mean curvature function~\cite{RH1990}. In 2010, Sabitov published a paper claiming that compact Bonnet pairs cannot exist for every genus \cite{Sab2010}. In 2012, however, he retracted his claims and published a second paper with sufficient conditions for uniqueness ~\cite{Sab2012}. The geometry of these sufficient conditions has been further clarified in \cite{JMN2018}.	
\end{enumerate}

We study the Global Bonnet Problem by investigating Bonnet pairs. We build from Kamberov, Pedit, and Pinkall's local classification of Bonnet pairs, using a quaternionic function theory, in terms of isothermic surfaces~\cite{KPP}. Iso\-thermic surfaces are characterized by exhibiting conformal, curvature line coordinates away from umbilic points. Isothermic surfaces have a Christoffel dual surface $f^*$ with parallel tangent planes and inverse metric. The differentials of the Bonnet pair surfaces $f^\pm$ are written in terms of the isothermic surface $f$, its dual surface $f^*$, and a real parameter $\epsilon$ as follows.
\begin{align}
	\label{eq:dfpm}
	df^\pm = (\pm \epsilon - f)df^*(\pm \epsilon + f)
\end{align}
The action of $f \pm \epsilon$ on $df^*$ is a rotation and scaling, implying that $f^+$ and $f^-$ are conformally equivalent to $f^*$ and therefore also $f$.

Here, we construct compact Bonnet pairs that are tori.

\subsection{Outline of the construction}
We explicitly construct examples of compact Bonnet pairs of genus one, i.e., both surfaces $f^+$ and $f^-$ are tori. Moreover, they are analytic. The construction uses the above relationship to isothermic surfaces. For an isothermic surface $f(u,v)$ with conformal curvature line coordinates $u,v$, \eqref{eq:dfpm} allows us to study the period problem of $f^+(u,v)$ and $f^-(u,v)$ directly. An immediate necessary condition is that the isothermic surface $f$ must be a torus. We therefore find an appropriate isothermic torus $f$ that integrates using \eqref{eq:dfpm} to the Bonnet pair tori $f^\pm$.

\paragraph{Isothermic tori with one family of planar curvature lines.}
Our essential geometric observation is that the periodicity conditions drastically simplify when the isothermic torus has one family of curvature lines that are planar. We make this planar assumption for the curvature $u$-lines, i.e., $f(u,v_0)$ lies in a plane for each constant $v_0$.

In 1883, Darboux used complex analytic methods to locally classify isothermic surfaces with one family of planar curvature lines~\cite{Darboux1883,Darboux1896}. His choice of real reduction does not include tori. In our companion paper~\cite{short-isothermic-planar}, we classify the tori found in the second real reduction. We restate the key results in Sections~\ref{sec:periodicty-conditions-planar-u} and ~\ref{sec:periodicty-conditions-planar-u-spherical-v}. The geometry of these isothermic tori are key to constructing Bonnet pair tori.

In particular, an isothermic surface $f(u,v)$ with closed planar $u$-lines has a functional freedom in its construction. Given $f$, there exists a mapping of all planar $u$-lines into a common plane such that the family of planar curves is holomorphic with respect to $u + \ci \w$ for a \emph{reparametrization function} $\w(v)$. Conversely, given the holomorphic family of closed planar curves, choosing a reparametrization function $\w(v)$ gives $f$. The surface depends on the choice of reparametrization function $\w(v)$. Some choices close $f$ into a torus.

\paragraph{Constructing Bonnet tori.}
The Kamberov--Pedit--Pinkall construction \eqref{eq:dfpm} is a formula for 1-forms. It allows us to analyze when the resulting Bonnet pair surfaces $f^+$ and $f^-$ are closed.
\begin{itemize}
	\item The isothermic surface $f$ must be a torus for $f^\pm$ to be tori.
\end{itemize}
	Expanding \eqref{eq:dfpm} gives
	\begin{align}
		\label{eq:dfpmEpsExpansion}
		df^\pm = -f df^* f + \epsilon^2 df^* \pm \epsilon (df^* f - f df^*).
	\end{align}
In Section~\ref{sec:periodicty-conditions-planar-u} we show that if $f$ has planar curvature $u$-lines the periodicity conditions are drastically simplified.
	
	If $f$ is a torus with planar curvature $u$-lines then:
\begin{itemize}

	\item $f^*$ is a torus. Thus, the $\epsilon^2$ term of \eqref{eq:dfpmEpsExpansion} is closed.
	\item $(f^{-1})^*$ is a torus. Thus, the $\epsilon^0$ term of \eqref{eq:dfpmEpsExpansion} is closed.
	
	The $\epsilon^1$ term is not automaticlly closed, but simplifies as follows:
	
	\item The $u$-period of $df^* f - f df^*$ vanishes.
	\item The $v$-period of $df^* f - f df^*$ is independent of $u$.
\end{itemize}
In short, when $f$ is an isothermic surface with one family of planar curvature lines the Bonnet surfaces $f^\pm$ are tori if and only if
\begin{enumerate}
	\item[i.] $f$ is a torus and
	\item[ii.] the $\R^3$-valued $v$-period vanishes.
\end{enumerate}
In Theorem~\ref{thm:isothermicCylinderBonnetPeriodicityConditions} we show these two conditions reduce even more, see Figure~\ref{fig:periodicity-conditions-outline}.
\begin{enumerate}
	\item[i.] (Rationality condition) For the isothermic surface $f$ to be a torus the reparametrization function $\w(v)$ must be periodic. In this case, $f$ is generated by the rotation of a fundamental piece around an axis. Thus, $f$ is a torus when the rotation angle is a rational multiple of $\pi$. 
	\item[ii.] (Vanishing axial part) The $\R^3$-valued integral for the $v$-period reduces to an $\R$-valued period that must vanish.
\end{enumerate}

To construct Bonnet tori, we show how to choose a reparametrization function $\w(v)$ so these two conditions are simultaneously satisfied. The challenge is to make both conditions analytically tractable.

In Section~\ref{sec:periodicty-conditions-planar-u-spherical-v} we consider the special case of isothermic surfaces with one family of closed planar and one family of spherical curvature lines. The spherical curvature lines are governed by a second elliptic curve. Both the rationality condition and the vanishing of the real period (conditions i. and ii. above) are expressed in terms of elliptic integrals, see Theorem~\ref{thm:bonnetPeriodicityAsAbelianIntegrals}. We show that both conditions can be satisfied, proving the existence of real analytic Bonnet pair tori in Theorem~\ref{thm:compactBonnetPairsSphericalExistence}.

We remark that isothermic tori with two families of spherical curvature lines were studied by Bernstein in \cite{Bernstein2001} in relationship to the Bonnet problem, but compact Bonnet pairs were not constructed.

In Section~\ref{sec:compact-bonnet-pairs} we prove the existence of more general examples of real analytic Bonnet pair tori. We retain the planar curvature $u$-lines of the isothermic torus $f$ but analytically perturb the reparametrization function $\w(v)$ so that the $v$-lines are no longer spherical, see Theorem~\ref{thm:Bonnet_pair_two}. This perturbation retains a real analytic functional freedom. Figure~\ref{fig:teaserNonsphericalPlusMinus} shows a numerical example of these more general analytic Bonnet pair tori.

Both Main Theorems are immediate corollaries of Theorem~\ref{thm:Bonnet_pair_two} in Section~\ref{sec:Bonnet_two_surfaces}.

\subsection{Discovery using Discrete Differential Geometry}
At last, we would like to mention the role of Discrete Differential Geometry (DDG) in the discovery of compact Bonnet tori. DDG aims at the development of structure preserving discrete equivalents of notions and methods of classical differential geometry. 
 Discrete isothermic surfaces introduced in \cite{DiscreteIsothermic96} are a well-studied example, highlighting the link between geometry and integrable systems. It was recently observed that a discrete analog of the Kamberov--Pedit--Pinkall construction \eqref{eq:dfpm} allows to define discrete Bonnet pairs \cite{HSFW17}. This led to numerical experiments to search for Bonnet tori on an extremely coarse torus, see Section~\ref{sec:discrete-theory}.

A careful study of the discrete isothermic torus, which led to a discrete compact Bonnet pair, showed, in particular, that one of its family of curvature lines was planar. This observation initiated our work on the present paper. It is remarkable that a very coarse $5\times7$ torus has the essential features of the corresponding smooth object. This exemplifies the importance of a structure preserving discrete theory.

Data from the figures are available in the Discretization for Geometry and Dynamics Gallery \url{https://www.discretization.de/gallery/}  .

\paragraph{Acknowledgements}
We thank Max Wardetzky for inspiring discussions that led to the computational discovery of discrete compact Bonnet pairs. This research was supported by the DFG Collaborative Research Center TRR 109 ``Discretization in Geometry and Dynamics''.


\section{Differential equations of surfaces}
\label{sec:diff-eq-surfaces}
\subsection{Conformally parametrized surfaces}
\label{sec:conf-parametrized}

Let ${\cal F}$ be a smooth orientable surface in 3-dimensional Euclidean space.
The Euclidean metric induces a metric $\Omega$ on this surface, which
in turn generates the complex structure of a Riemann surface $\cal R$.
 Under
such a parametrization, which is called  conformal, the surface
$\cal F$ is given by an immersion
$$
f=(f_1, f_2, f_3) : {\cal R} \rightarrow \R^3,
$$
and the metric is conformal: $\Omega=e^{2h}\, dzd\bar{z}$, where $z$ is
a local coordinate on $\cal R$. Denote by $u$ and $v$ its real and imaginary parts: $ z=u+\ci v$.

The tangent vectors $f_u, f_v$  together with the unit normal $n:{\cal R}\to \mathbb{S}^2$ define a conformal moving frame on the surface:
\begin{align*}
\<f_u, f_u>=\<f_v, f_v>&=e^{2h},\ \<f_u,f_v>=0,& \\
\<f_u, n>=\<f_v,n>&=0,\ \<n,n>=1.&                         
\end{align*}

Let us use the complex operator   $\partial_z=\frac{1}{2}(\partial_u-\ci \partial_v)$ and the complexified inner product to introduce the quadratic Hopf differential $Qdz^2$ by
$$
Q=\<f_{zz},n>.
$$
The first and second fundamental forms of the surface are given by
\begin{eqnarray}
\label{eq:fund.forms}
\< df, df >&=&e^{2h} dz d\bar{z},\\
-\< df, dn >&=&He^{2h} dz d\bar{z}+Qdz^2+\bar{Q}d\bar{z}^2 \nonumber
\end{eqnarray}
where $H=\frac{1}{2}(k_1+k_2)$ is the mean curvature (average of the principal curvatures) of the surface
\begin{equation}			\label{eq:mean_curvature}
\<f_{z\bar{z}},n>=\frac{1}{2}He^{2h}.
\end{equation}
We will also consider the third fundamental form $\<dn,dn>$.

The Gaussian curvature is given by 
$$
K=k_1k_2=H^2-4Q\bar{Q}e^{-4h},
$$
and is known to be determined by the metric only.

A point $P$ on the surface $\cal F$ is called {\em umbilic} if the principal curvatures coincide $k_1(P)=k_2(P)$. The Hopf differential vanishes $Q(P)=0$ exactly at umbilic points.

The conformal frame satisfies the following complex frame equations:
\begin{eqnarray}
\left(\begin{array}{c}
f_z\\
f_{\bar{z}}\\
n
\end{array}\right)_{z}=
\left(\begin{array}{ccc}
2h_z & 0 & Q\\
0 & 0 & \frac{1}{2}He^{2h}\\
-H & -2e^{-2h}Q & 0
\end{array}\right)
\left(\begin{array}{c}
f_z\\
f_{\bar{z}}\\
n
\end{array}\right),  \label{eq:conformal_frame_z}
\\
\left(\begin{array}{c}
f_z\\
f_{\bar{z}}\\
n
\end{array}\right)_{\bar{z}}=
\left(\begin{array}{ccc}
0 & 0 &  \frac{1}{2}He^{2h}\\
0 & 2h_{\bar{z}} &  \bar{Q}\\
-2e^{-2h}\bar{Q} & -H & 0
\end{array}\right)
\left(\begin{array}{c}
f_z\\
f_{\bar{z}}\\
n
\end{array}\right).   \label{eq:conformal_frame_zbar}
\end{eqnarray}                                    
Their compatibility conditions, known as the Gauss--Codazzi equations, have the following form:
\begin{equation}        \label{complex version of Gauss-Codazzi}
\begin{array}{lrcl}
 {\rm Gauss\  equation}\quad &
    h_{z \bar z} + \frac{1}{4}\,
    H^2\,e^{2h} -  \vert Q\vert^2\,e^{-2h}
    & = & 0,
\\
 {\rm Codazzi\  equation}\quad &
    Q_{\bar z} & = & \frac{1}{2}\,H_z\,e^{2h}.
\end{array}
\end{equation}
These equations are necessary and sufficient for the existence of the corresponding
surface. The classical Bonnet theorem characterizes surfaces via the coefficients $e^{2h},Q,H$ of their fundamental forms.

\begin{theorem} {\bf (Bonnet theorem)}.                \label{Bonnet.t}
Given a metric $e^{2h}\, dzd\bar{z}$, a quadratic differential $Q\, dz^2$, and a mean curvature 
function $H$ on $\cal R$ satisfying the Gauss--Codazzi equations, there exists an
immersion
$$
f:\tilde{\cal R} \to \R^3
$$
with the fundamental forms (\ref{eq:fund.forms}). Here $\tilde{\cal R}$ is the
universal covering of $\cal R$. The immersion $f$ is unique up to Euclidean
motions in $\R^3$.
\end{theorem}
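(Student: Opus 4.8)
\section*{Proof proposal}

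The plan is to regard the complex frame equations \eqref{eq:conformal_frame_z}--\eqref{eq:conformal_frame_zbar} as a linear, overdetermined system for the $\C^3$-valued triple $\Psi=(f_z,f_{\bar z},n)^{\mathsf T}$, namely $\Psi_z=U\Psi$ and $\Psi_{\bar z}=V\Psi$ with $U,V$ the explicit matrices built from $h,Q,H$, and to integrate it by the Frobenius theorem on the simply connected surface $\tilde{\cal R}$. First I would verify by direct substitution that the integrability condition $U_{\bar z}-V_z+UV-VU=0$ is equivalent to the Gauss--Codazzi system \eqref{complex version of Gauss-Codazzi}; this is the bookkeeping that links the two formulations and is assumed implicitly by the statement. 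Granting it, for a chosen base point $p_0\in\tilde{\cal R}$ and any initial value $\Psi(p_0)=\Psi_0$ there is a unique smooth solution $\Psi$ on all of $\tilde{\cal R}$.

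Next comes the conceptual heart of the argument: showing that an \emph{admissible} initial frame stays admissible. I would pick $\Psi_0$ whose rows $a,\bar a,n_0$ satisfy the conformality relations $\<a,a>=0$, $\<a,\bar a>=\frac{1}{2}e^{2h(p_0)}$, $\<a,n_0>=0$, $\<n_0,n_0>=1$ (with $n_0$ fixing the orientation), as well as the reality constraint that the second row is the complex conjugate of the first and the third row is real. Writing ${\cal G}$ for the matrix of complexified inner products of the rows of $\Psi$, differentiation gives ${\cal G}_z=U{\cal G}+{\cal G}U^{\mathsf T}$ and ${\cal G}_{\bar z}=V{\cal G}+{\cal G}V^{\mathsf T}$; a short check shows that the matrix ${\cal G}_0$ having the entries $\frac{1}{2}e^{2h}$ in positions $(1,2)$ and $(2,1)$, a $1$ in position $(3,3)$, and zeros elsewhere solves exactly this linear system. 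Since ${\cal G}$ and ${\cal G}_0$ agree at $p_0$, uniqueness forces ${\cal G}\equiv{\cal G}_0$; an analogous invariance argument, using the identity $\bar V=PUP$ with $P$ the transposition of the first two indices, preserves the reality constraint. Thus $(f_z,f_{\bar z},n)$ is a genuine conformal frame at every point; in particular $\<f_z,f_{\bar z}>=\frac{1}{2}e^{2h}\neq 0$ and $n$ is a real unit vector orthogonal to the tangent plane.

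I would then recover $f$ itself. The first row of the system reads $(f_z)_{\bar z}=\frac{1}{2}He^{2h}n=(f_{\bar z})_z$, so the $\R^3$-valued $1$-form $f_z\,dz+f_{\bar z}\,d\bar z=2\Re(f_z\,dz)$ is closed; by simple connectedness of $\tilde{\cal R}$ it integrates to an immersion $f:\tilde{\cal R}\to\R^3$ (an immersion, since $f_u,f_v$ are linearly independent by conformality). Substituting the frame expressions for $f_z,f_{\bar z},n_z,n_{\bar z}$ into $\<df,df>$ and $-\<df,dn>$ reproduces the prescribed fundamental forms \eqref{eq:fund.forms}, and the propagated $n$ is the Gauss map with the chosen orientation. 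For uniqueness, two immersions carrying the same data have frames solving the same linear system; a Euclidean motion matching the two frames at $p_0$ matches them everywhere by ODE uniqueness, because rigid motions act on the vector index and commute with the frame evolution, and integrating shows the immersions differ by that motion.

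The main obstacle is the invariance step of the second paragraph: a priori the Frobenius solution consists of nine arbitrary $\R$-valued functions, so the crux is to check that the conformality and reality relations cut out an invariant submanifold for the flow, equivalently that ${\cal G}_0$ solves the Gram-matrix ODE. The remaining ingredients — the equivalence of the compatibility of the linear system with Gauss--Codazzi, and the closedness of $f_z\,dz+f_{\bar z}\,d\bar z$ — are routine but essential, and simple connectedness of $\tilde{\cal R}$ is used twice, once to globalize the Frobenius integration and once to integrate $df$ to $f$.
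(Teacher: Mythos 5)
The paper does not prove this statement at all: Theorem~\ref{Bonnet.t} is the classical Bonnet theorem, quoted as known background (the frame equations \eqref{eq:conformal_frame_z}--\eqref{eq:conformal_frame_zbar} and the remark that their compatibility is \eqref{complex version of Gauss-Codazzi} are the only ingredients the paper records). So there is no in-paper argument to compare against; what you give is the standard existence-and-uniqueness proof, and it is sound. Your two ``routine'' steps are exactly right: the zero-curvature condition $U_{\bar z}-V_z+UV-VU=0$ for the matrices in \eqref{eq:conformal_frame_z}--\eqref{eq:conformal_frame_zbar} is precisely \eqref{complex version of Gauss-Codazzi} (this is how the paper presents Gauss--Codazzi in the first place), and the crucial invariance claims check out against the explicit matrices: ${\cal G}_0$ with entries $\tfrac12 e^{2h}$ at $(1,2),(2,1)$ and $1$ at $(3,3)$ does satisfy ${\cal G}_z=U{\cal G}+{\cal G}U^{\mathsf T}$, ${\cal G}_{\bar z}=V{\cal G}+{\cal G}V^{\mathsf T}$ (the off-diagonal $(1,3),(2,3)$ contributions cancel between $Q$, $\tfrac12He^{2h}$ and the third row of $U$, $V$), and $\bar V=PUP$ holds since $h$ is real, so the reality constraint propagates. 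The recovery of $f$ from the closed form $f_z\,dz+f_{\bar z}\,d\bar z$, the verification of \eqref{eq:fund.forms}, and the uniqueness argument via matching frames at a base point are the standard conclusion.

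One small point worth making explicit in the uniqueness step: at the base point the two frames $(f_u,f_v,n)$ and $(\tilde f_u,\tilde f_v,\tilde n)$ have the same Gram matrix, and since in both cases $n$ is the oriented normal, both triples are positively oriented, so the matching isometry can be taken to be a proper rotation (plus a translation); with the opposite orientation convention the data $(Q,H)$ would change sign, so no reflection ambiguity arises. With that remark, and with the Frobenius/zero-curvature verification carried out (it is a finite computation), your proposal is a complete and correct proof of the stated theorem.
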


Generic surfaces are determined uniquely by the metric and the mean curvature function. This paper is devoted to the investigation of the exceptions, i.e., to surfaces which possess non-congruent isometric ``relatives'' with the same curvatures.

\subsection{Isothermic surfaces}
\label{sec:isothermic}
Isothermic surfaces play a crucial role in this paper.

A parametrization that is simultaneously conformal and curvature line is called {\em isothermic}.
In this case the preimages of the curvature lines are the lines
$u={\rm const}$ and $v={\rm const}$ on the parameter domain, where
$z=u+\ci v$ is a conformal coordinate. Equivalently a parametrization is isothermic if it is conformal and $f_{uv}$ lies in the tangent plane, i.e.,
\begin{equation}   \label{eq:f_uv-isothermic}
f_{uv}\in {\rm span}\{ f_u, f_v \}.
\end{equation}
A surface  is called {\em isothermic}
if it admits an isothermic parametrization. Isothermic surfaces are
divided by their curvature lines into infinitesimal squares. 

Written in terms of an isothermic coordinate $z$, the Hopf differential of an isothermic surface is real,
i.e., $Q(z, \bar{z})\in \R$.

The differential equations describing isothermic surfaces simplify in isothermic coordinates. The frame equations are as follows: 
\begin{eqnarray}
\left(\begin{array}{c}
f_u\\
f_v\\
n
\end{array}\right)_u=
\left(\begin{array}{ccc}
h_u & -h_v & k_1e^{2h}\\
h_v & h_u & 0\\
-k_1 & 0 & 0
\end{array}\right)
\left(\begin{array}{c}
f_u\\
f_v\\
n
\end{array}\right),
\label{eq:isothermicFrameEquationsDU}
\\
\left(\begin{array}{c}
f_u\\
f_v\\
n
\end{array}\right)_v=
\left(\begin{array}{ccc}
h_v & h_u &  0\\
-h_u & h_v & k_2e^{2h}\\
0 & -k_2 & 0
\end{array}\right)
\left(\begin{array}{c}
f_u\\
f_v\\
n
\end{array}\right).
\label{eq:isothermicFrameEquationsDV}
\end{eqnarray}
Here $k_1, k_2$ are the principal curvatures along the $u$ and $v$ curvature lines. The Hopf differential is
$$
Q=\frac{1}{4}e^{2h}(k_1-k_2).
$$ 
The first, second, and the third fundamental forms are given by
\begin{eqnarray}
\label{eq:fund.forms_isothermic}
\<df,df> & =& e^{2h} (du^2+dv^2), \nonumber\\
- \< df,dn >&=&e^{2h}(k_1du^2+k_2dv^2), \\
\<dn,dn>&=&e^{2h}(k_1^2 du^2+k_2^2 dv^2).\nonumber
\end{eqnarray}
The Gauss--Codazzi equations become
\begin{eqnarray}
& h_{uu}+h_{vv}+k_1k_2e^{2h}=0,  \label{eq:Gauss_isothermic}\\
& {k_2}_u=h_u(k_1-k_2),\quad {k_1}_v=h_v(k_2-k_1).\label{eq:Codazzi_isothermic}
\end{eqnarray}

Let $D\subset \C$ be a simply connected domain and $f:D\to \R^3$ be an isothermic immersion without umbilic points. Its differential is $df=f_u du+f_v dv$. An important property of an isothermic immersion is that the following form is closed.
\begin{equation}
\label{eq:isothermic_dual}
df^*:=e^{-2h}(f_u du -f_v dv).
\end{equation}
The corresponding immersion $f^*:D\to \R$, which is determined up to a translation, is also isothermic and is called the {\em (Christoffel) dual isothermic surface}. The relation (\ref{eq:isothermic_dual}) is an involution. Note that the dual isothermic surface is defined through one forms and the periodicity properties of $f:{\cal R}\to \R^3$ are not respected.


\section{The Bonnet problem}
\label{sec:the-problem}

The Bonnet Theorem \ref{Bonnet.t} characterizes surfaces via the coefficients
$e^{2h},Q, H$ of their fundamental forms. These coefficients are not independent and
are subject to the Gauss--Codazzi equations (\ref{complex version of Gauss-Codazzi}).
A natural question is whether some of these data are superfluous.

A Bonnet pair is two non-congruent isometric surfaces ${\cal F}^+$ and ${\cal F}^-$ with the same mean curvature at corresponding points.

In 1981, Lawson and Tribuzy proved there are at most two non-congruent isometric immersions of a compact surface with the same non-constant mean curvature function \cite{LawTri}. This led them to ask:

\vspace{0.2cm}
(Global Bonnet Problem)\emph{ Do there exist compact Bonnet pairs?}
\vspace{0.2cm}

Let ${\cal F}^+, {\cal F}^-\subset\R^3$ be a smooth Bonnet pair. As conformal immersions of the same Riemann surface,
$$
f^+:{\cal R}\to \R^3 \quad \text{and} \quad f^-:{\cal R}\to \R^3
$$
are described by the corresponding Hopf differentials $Q^+, Q^-$, their common metric
$e^{2h}\, dzd\bar{z}$ and their common mean curvature function $H$. Since the surfaces are not congruent, the Gauss--Codazzi equations immediately imply that their Hopf differentials are not equal $Q^+\not\equiv Q^-$.

\begin{proposition}                            \label{conditions_forQ_12}
Let $Q^+$ and $Q^-$ be the Hopf differentials of a Bonnet pair
$f^\pm:{\cal R}\to\R^3$. Then
\begin{equation}                        \label{eq:holQ}
Q_h=Q^+ -Q^-
\end{equation}
is a holomorphic quadratic differential  $Q_h\, dz^2$  on $\cal R$ and
\begin{equation}                        \label{eq:modQ}
\mid Q^+ \mid=\mid Q^- \mid.
\end{equation}
\end{proposition}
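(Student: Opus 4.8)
The two claims are both consequences of the Codazzi equation in \eqref{complex version of Gauss-Codazzi}, applied to the two surfaces of the pair with their common data $e^{2h}$ and $H$. Since $f^+$ and $f^-$ are isometric with the same mean curvature function, we have the Codazzi equations $Q^+_{\bar z} = \tfrac12 H_z e^{2h}$ and $Q^-_{\bar z} = \tfrac12 H_z e^{2h}$, and subtracting gives $(Q^+ - Q^-)_{\bar z} = 0$, i.e. $Q_h = Q^+ - Q^-$ is holomorphic. One should check the tensorial point: $Q\,dz^2$ transforms as a quadratic differential under holomorphic coordinate changes, and $e^{2h}\,dz\,d\bar z$ and $H$ are coordinate-independent, so the difference $Q_h\,dz^2$ is a globally well-defined holomorphic quadratic differential on $\mathcal R$ (not just locally). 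This gives \eqref{eq:holQ}.

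For \eqref{eq:modQ}, the plan is to use the Gauss equation. Both surfaces share the metric, hence the same $h$, hence the same left-hand side derivative term $h_{z\bar z}$; and they share $H$. So the Gauss equations read
\begin{align*}
h_{z\bar z} + \tfrac14 H^2 e^{2h} - |Q^+|^2 e^{-2h} &= 0,\\
h_{z\bar z} + \tfrac14 H^2 e^{2h} - |Q^-|^2 e^{-2h} &= 0.
\end{align*}
Subtracting, $(|Q^+|^2 - |Q^-|^2)e^{-2h} = 0$, and since $e^{-2h} > 0$ this forces $|Q^+| = |Q^-|$ pointwise, which is \eqref{eq:modQ}.

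I do not expect a serious obstacle here; the statement is essentially a direct reading of the Gauss--Codazzi system for the two surfaces with identical $(e^{2h}, H)$. The only point requiring a little care is the global well-definedness of $Q_h\,dz^2$ as a holomorphic quadratic differential on the Riemann surface $\mathcal R$: one must confirm that the local expressions $Q^\pm$ patch correctly under the transition maps of $\mathcal R$ (they do, since the second fundamental forms of $f^\pm$ are globally defined symmetric bilinear forms, and $Q^\pm\,dz^2$ is their $(2,0)$-part), so their difference is a genuine global section of $K_{\mathcal R}^{\otimes 2}$; holomorphy then follows from the Codazzi computation above, and holomorphy is a local condition so it is enough to have verified it in each chart. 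Finally one may remark that $Q_h \not\equiv 0$: if $Q_h \equiv 0$ then $Q^+ \equiv Q^-$, and together with $|Q^+| = |Q^-|$ — which is automatic then — the Bonnet theorem (Theorem~\ref{Bonnet.t}) would force $f^+$ and $f^-$ to be congruent, contradicting the hypothesis; this is already noted in the paragraph preceding the proposition, so it need not be repeated in the proof itself.
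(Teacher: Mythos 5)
Your argument is correct and is precisely the intended one: the paper states this proposition without proof as an immediate consequence of the Gauss--Codazzi system (\ref{complex version of Gauss-Codazzi}) for two surfaces sharing $e^{2h}$ and $H$ — subtracting the Codazzi equations gives $(Q^+-Q^-)_{\bar z}=0$, and subtracting the Gauss equations gives $|Q^+|=|Q^-|$, exactly as you wrote. Your remarks on the global tensorial nature of $Q_h\,dz^2$ and on $Q_h\not\equiv 0$ are accurate but not needed beyond what the paper already notes.
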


Due to (\ref{eq:modQ}) the umbilic points of ${\cal F}^+$ and
${\cal F}^-$ correspond. 

A holomorphic quadratic differential on a sphere vanishes identically $Q_h\equiv
0$. This implies $Q^+=Q^-$, and the non-existence of Bonnet spheres. This result and the following corollary were proven by Lawson and Tribuzy \cite{LawTri}.

\begin{corollary}                            \label{Bonnet_spheres}
There exist no Bonnet pairs of genus $g=0$.
\end{corollary}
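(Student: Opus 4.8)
The plan is to combine Proposition~\ref{conditions_forQ_12} with the classical fact about holomorphic quadratic differentials on the Riemann sphere. Suppose for contradiction that $f^\pm : \mathcal{R} \to \R^3$ is a Bonnet pair with $\mathcal{R}$ of genus $g = 0$, i.e., $\mathcal{R}$ is (conformally) the Riemann sphere $\C \cup \{\infty\}$. By Proposition~\ref{conditions_forQ_12}, $Q_h = Q^+ - Q^-$ is a holomorphic quadratic differential on $\mathcal{R}$.

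The key step is to recall why a holomorphic quadratic differential on $\mathcal{R} \cong \S^2$ must vanish identically. Writing it in an affine coordinate $z$ as $q(z)\, dz^2$ with $q$ holomorphic (hence entire) on $\C$, one examines the transformation under the coordinate change $w = 1/z$ covering a neighborhood of $\infty$: since $dz^2 = dw^2/w^4$, the coefficient in the $w$-chart is $q(1/w)/w^4$, which must extend holomorphically across $w = 0$. This forces $q(1/w) = O(w^4)$ as $w \to 0$, i.e., $q(z) \to 0$ as $z \to \infty$; by Liouville's theorem the bounded entire function $q$ is constant, and the decay forces that constant to be $0$. Hence $Q_h \equiv 0$, so $Q^+ \equiv Q^-$.

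It then remains to conclude that $f^+$ and $f^-$ are congruent. Since the two immersions share the metric $e^{2h}\, dz\, d\bar z$, the mean curvature function $H$, and now also the Hopf differential $Q^+ = Q^-$, they have identical first and second fundamental forms by \eqref{eq:fund.forms}. The classical Bonnet theorem (Theorem~\ref{Bonnet.t}) then yields that $f^+$ and $f^-$ coincide up to a Euclidean motion of $\R^3$, contradicting the assumption that the pair is non-congruent. Therefore no Bonnet pair of genus $0$ exists.

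I do not anticipate a serious obstacle here; the argument is entirely classical. The only point requiring a little care is the passage from "holomorphic quadratic differential on a compact Riemann surface of genus $g$" to the dimension count (which is $0$ for $g = 0$, $1$ for $g = 1$, and $3g-3$ for $g \geq 2$ by Riemann--Roch) — but for $g = 0$ the elementary Liouville-type argument above suffices and avoids invoking Riemann--Roch. One should also note in passing that the mild hypothesis ``non-constant mean curvature'' from Lawson--Tribuzy is not needed for this particular corollary: the vanishing of $Q_h$ is forced purely by the genus, independently of whether $H$ is constant.
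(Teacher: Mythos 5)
Your proposal is correct and follows exactly the paper's route: vanishing of the holomorphic quadratic differential $Q_h$ on the sphere (via Proposition~\ref{conditions_forQ_12}), hence $Q^+\equiv Q^-$, and then rigidity from Bonnet's theorem~\ref{Bonnet.t} contradicts non-congruence. The paper merely states these steps tersely (attributing the result to Lawson--Tribuzy), while you spell out the Liouville-type argument and the final appeal to Theorem~\ref{Bonnet.t}, which is unproblematic since a genus-zero surface is simply connected.
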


For tori, the Riemann surface can be represented as a factor ${\cal R}=\C/{\cal L}$ with respect to a lattice ${\cal L}$. A non-vanishing holomorphic quadratic differential $Q_h dz^2$ is represented by a doubly-periodic holomorphic function $Q_h$, which must be a non-vanishing constant. Scaling the complex coordinate $z$ one can normalize so that  $Q_h=i$, in which case
\begin{equation}                        \label{eq:normalizationQ}
Q^+={1\over 2}(\alpha +i), \quad Q^-={1\over 2}(\alpha -i),
\end{equation}
where $\alpha:\C/{\cal L}\to\R$ is a smooth function.

\begin{proposition}                            \label{prop:pairs_GC}
Bonnet pairs of genus $g=1$ have no umbilic points. 
The Gauss--Codazzi equations of Bonnet tori with properly normalized (\ref{eq:normalizationQ}) Hopf differentials are
\begin{eqnarray}
4h_{z\bar{z}}+H^2 e^{2h}-(1+\alpha^2)e^{-2h}&=&0,     \label{eq:Gauss_Bonnet}\\
\alpha_{\bar{z}}&=&e^{2h} H_z.               \label{eq:Codazzi_Bonnet}
\end{eqnarray}
\end{proposition}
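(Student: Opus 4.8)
The plan is to prove the two statements of Proposition~\ref{prop:pairs_GC} in order: first the absence of umbilics, then the form of the Gauss--Codazzi equations. For the first claim, I would argue by contradiction. Suppose $f^+$ (equivalently $f^-$, by \eqref{eq:modQ}) has an umbilic point $P$, so $Q^+(P)=Q^-(P)=0$. On a torus the difference $Q_h=Q^+-Q^-$ is a holomorphic quadratic differential, hence (writing $\mathcal{R}=\C/\mathcal{L}$ in a conformal coordinate $z$) a doubly-periodic holomorphic function, which is necessarily a constant $c$. If this constant were zero we would have $Q^+\equiv Q^-$, forcing the two surfaces to have identical fundamental forms and hence to be congruent by the Bonnet Theorem~\ref{Bonnet.t}, contradicting the hypothesis that $f^\pm$ form a (non-congruent) Bonnet pair. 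So $c\neq 0$. But then $Q^+-Q^-$ is a nonzero constant, which is incompatible with $Q^+(P)=Q^-(P)=0$. Hence there are no umbilic points, and after rescaling $z$ we may normalize $Q_h=\ci$ and write $Q^\pm$ as in \eqref{eq:normalizationQ} with $\alpha:\C/\mathcal{L}\to\R$ smooth (the reality of $\alpha$ follows from \eqref{eq:modQ}: $|Q^+|=|Q^-|$ forces $Q^++Q^-$ to have the same modulus structure, and with $Q^+-Q^-=\ci$ one checks $Q^++Q^-$ is real).

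For the second claim, substitute the normalization \eqref{eq:normalizationQ} into the general complex Gauss--Codazzi equations~\eqref{complex version of Gauss-Codazzi}, which must hold separately for $f^+$ with Hopf differential $Q^+$ and for $f^-$ with $Q^-$, both sharing the same $h$ and $H$. The Gauss equation reads $h_{z\bar z}+\tfrac14 H^2 e^{2h}-|Q^\pm|^2 e^{-2h}=0$; since $|Q^+|^2=|Q^-|^2=\tfrac14(\alpha^2+1)$ from \eqref{eq:normalizationQ}, this becomes $h_{z\bar z}+\tfrac14 H^2 e^{2h}-\tfrac14(1+\alpha^2)e^{-2h}=0$, and multiplying by $4$ gives \eqref{eq:Gauss_Bonnet}. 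Note both surfaces yield the \emph{same} Gauss equation, which is consistent with them being isometric. For the Codazzi equation, $Q^\pm_{\bar z}=\tfrac12 H_z e^{2h}$; since $Q^\pm=\tfrac12(\alpha\pm\ci)$ and the constant $\pm\ci$ is killed by $\partial_{\bar z}$, we get $\tfrac12\alpha_{\bar z}=\tfrac12 H_z e^{2h}$, i.e. $\alpha_{\bar z}=e^{2h}H_z$, which is \eqref{eq:Codazzi_Bonnet}. (The two Codazzi equations for $Q^+$ and $Q^-$ differ only by the constant terms, which drop out, so they collapse to a single equation for $\alpha$.)

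The routine part is the substitution in the second claim; the only place requiring care is checking that $\alpha$ is genuinely real-valued and that \eqref{eq:modQ} is equivalent to $Q^++Q^-$ being real once $Q^+-Q^-$ has been normalized to $\ci$. This is a short computation: writing $Q^+=x+\ci y$, the condition $Q^+-Q^-=\ci$ gives $Q^-=x+\ci(y-1)$, and $|Q^+|^2=|Q^-|^2$ reduces to $y^2=(y-1)^2$, hence $y=\tfrac12$, so $Q^\pm=x\pm\tfrac{\ci}{2}$ with $x$ real; setting $\alpha=2x$ recovers \eqref{eq:normalizationQ}.

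The main obstacle is really conceptual rather than computational: one must be careful that the Gauss--Codazzi system is being applied once per surface with shared data $(h,H)$, and that the compatibility of the two copies is exactly what forces the constant imaginary part of $Q^\pm$ and produces the clean reduced system. No hard analysis is needed beyond Liouville's theorem for the doubly-periodic holomorphic function $Q_h$ and the uniqueness part of Bonnet's theorem, both already available in the excerpt.
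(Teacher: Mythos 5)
Your proposal is correct and follows essentially the same route the paper takes (the paper gives no separate proof; the argument is contained in the discussion preceding the proposition): $Q_h=Q^+-Q^-$ is holomorphic, hence a constant on the torus, nonzero since otherwise Bonnet's theorem would force congruence, so the common zeros of $Q^\pm$ (the umbilics) cannot exist, and after normalizing $Q_h=\ci$ the reality of $\alpha$ and the reduced Gauss--Codazzi system follow by the same substitution you perform. Your explicit check that $|Q^+|=|Q^-|$ together with $Q^+-Q^-=\ci$ forces $\mathrm{Im}\,Q^\pm=\pm\tfrac12$ is exactly the computation underlying \eqref{eq:normalizationQ}.
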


\begin{remark}\label{re:umbilic_divisor}
Generally, for any Bonnet pair ${\cal F}^\pm$ the set of umbilic points is isolated and coincides with the zero divisor of the quadratic differential $Q_h$ on ${\cal R}$, see \cite{Bob_Bonnet2008} for the smooth case and \cite{Sab2012} for the case of finite smoothness. 
\end{remark}

A direct analytic way to construct compact Bonnet pairs that are tori would be to find doubly-periodic solutions $h, H, \alpha:\C/{\cal L}\to\R$ of
(\ref{eq:Gauss_Bonnet},\ref{eq:Codazzi_Bonnet}) that integrate by (\ref{eq:conformal_frame_z},\ref{eq:conformal_frame_zbar}) to doubly-periodic frames, and finally to doubly periodic immersions $f^\pm:\C/{\cal L}\to\R^3$. We will not take this approach, since it seems hardly realizable.

Instead, we will find genus $g=1$ Bonnet pairs using a quaternionic description of surfaces and a deep relationship to isothermic surfaces.


\section{Characterization of Bonnet pairs via isothermic surfaces}
\label{sec:isothermic-characterization}

\subsection{Quaternionic description of surfaces}
\label{sec:quaternionic-desc}

We construct and investigate surfaces in $\R^3$ by analytic methods.
For this purpose it is convenient to rewrite the conformal frame equations
(\ref{eq:conformal_frame_z},\ref{eq:conformal_frame_zbar},\ref{eq:isothermicFrameEquationsDU},\ref{eq:isothermicFrameEquationsDV}) in terms of quaternions.
This quaternionic description is
useful for studying general curves and surfaces in 3- and
4-space, and particular special classes of surfaces
\cite{Bob_CMC1991, DPW, KPP, BFLPP}.

Let us denote the algebra of quaternions by $\H$, the multiplicative quaternion
group by ${\H_*}={\H}\setminus\{0\}$, and their standard basis by $\{{\bf1}
,{\qi},{\qj},{\qk}\}$, where
\begin{eqnarray}
{\qi}{\qj}={\qk},\
{\qj}{\qk}={\qi},\
{\qk}{\qi}={\qj},\
\qi^2 = \qj^2 = \qk^2 = -1.                                      \label{eq:quaternion_algebra}
\end{eqnarray}
This basis can be represented for example by the matrices 
\begin{eqnarray*}
{\bf1}=\left( \begin {array}{cc} 1&0\\0&1 \end{array}\right), \
\qi=\left( \begin {array}{cc} 0&-\ci \\-\ci &0 \end{array}\right),\ 
\qj=\left( \begin {array}{cc} 0&{-1}\\1&0 \end{array}\right),\
\qk=\left( \begin {array}{cc} -\ci &0\\0& \ci \end{array}\right).
 \end{eqnarray*}
We identify
$\H$ with 4-dimensional Euclidean space
$$
q=q_0 {\bf 1}+ q_1 \qi+q_2 \qj+q_3 \qk
\longleftrightarrow \ q=(q_0, q_1, q_2, q_3)\in\R^4.
$$
The length of a quaternion is $|q|^2 = q \overline q$, where $\overline q =q_0 {\bf 1}- q_1 \qi-q_2 \qj-q_3 \qk$ is the conjugate of $q$. The inverse of $q \neq 0$ is $q^{-1} = \frac{\overline q}{|q|^2}.$ The sphere $\S^3\subset\R^4$ is naturally identified with the group of unitary
quaternions $\H_1=\mathrm{SU}(2)$.

Three dimensional Euclidean space is identified with the space of imaginary quaternions
${\rm Im}\ {\H}$
\begin{eqnarray}
X =X_1\qi +X_2\qj +X_3\qk \in{\rm Im}\ {\H}\
\longleftrightarrow \ X=(X_1, X_2, X_3)\in\R^3.                             \label{eq:R^3=ImH}
\end{eqnarray}
The scalar and the cross products of vectors in terms of quaternions are given by
\begin{eqnarray} 
XY=-\<X,Y>+X \times Y, 										 \label{eq:products}
\end{eqnarray}
in particular
$$
[X,Y]=XY-YX=2 X\times Y.
$$
Throughout this article we will not distinguish quaternions, their matrix representation, and their vectors in $\R^3$. For example vectors $f$ and $n$ are also identified with imaginary quaternions, and we can also write the Christoffel dual one-form \eqref{eq:isothermic_dual} as $df^*=-f_u^{-1} du + f_v^{-1} dv.$

Moreover, we identify the space of complex numbers $\C$ with the span of ${\bf1}$ and $\qi$.
$$
z = a + b \ci \longleftrightarrow z = a {\bf1} + b \qi.
$$
In particular, we will often use a copy of the complex plane in the span of $\qj, \qk$ written as $\C \qj$. For $z \in \C$
$$
z \qj = (a + b \ci ) \qj = a \qj + b \qk.
$$
Note that $(a + b \ci) \qj = z \qj = \qj \bar z = \qj (a - b \ci)$.

For clarity we use $\Imc : \C \to \R$ and $\Imq: \H \to {\rm Im}\ {\H}$ to distinguish between the complex and quaternionic imaginary part. Note, in particular, that under these identifications, for $z = (a + b \ci) \in \C$, $\Imc z = b$ while $\Imq  = b \ \qi$. There is no ambiguity for the real part $\Re z = a$.

We will extensively use the actions of quaternions on $\R^3$. For $q \in \H$ and $X \in \R^3$, the action
$X \mapsto q^{-1} X q$ 
rotates $X$ about an axis parallel to $\Imq q$,
while $X \mapsto \overline{q} X q$ rotates about $\Imq q$ and scales by $|q|^2$.

\subsection{Local description of Bonnet pairs}
\label{subsec:Bonnet_isothermic_local}

If $f:D\to {\rm Im}\ {\H}$ is an isothermic surface with the differential $df=f_u du+ f_v dv$ then the dual isothermic immersion (\ref{eq:isothermic_dual}) is given by the closed form
\begin{equation}
\label{eq:isothermic_qual_quaternionic}
df^*=-(f_u)^{-1}du + (f_v)^{-1}dv,
\end{equation}
where $f_u$ and $f_v$ are imaginary quaternions. The closedness of (\ref{eq:isothermic_qual_quaternionic}) is equivalent to $-\frac{\partial}{\partial v}(f_u)^{-1}=\frac{\partial}{\partial u}(f_v)^{-1}$. The conformality 
\begin{equation}		\label{eq:conformality_quaternionic}
f_u^2=f_v^2=-e^{2h}, \quad f_u f_v=-f_v f_u
\end{equation} 
implies $(f_u)^{-1}=-e^{2h}f_u$, so we obtain the condition 
\begin{equation}   \label{eq:f_uv_technical}
f_u f_{uv} f_u=f_v f_{uv} f_v.
\end{equation}
This identity is satisfied for isothermic surfaces (\ref{eq:f_uv-isothermic}). 

Bianchi found out that Bonnet pairs can be described in terms of isothermic surfaces in $S^3$ \cite{Bianchi1903}. The following description of simply connected Bonnet pairs in quaternionic terms via isothermic surfaces was obtained by Kamberov, Pedit, and Pinkall in \cite{KPP}.

\begin{theorem}	\label{t:KPP}
The immersions $f^\pm :D\to {\rm Im}\ {\H} =\R^3$ build a Bonnet pair if and only if there exists an isothermic surface $f:D\to {\rm Im}\ {\H}$ and a real number $\epsilon\in \R$ such that
\begin{equation}\label{eq:KPP}
df^\pm=(\pm \epsilon - f)df^*(\pm \epsilon + f),
\end{equation}
where $f^*$ is the dual isothermic surface (\ref{eq:isothermic_qual_quaternionic}). 
\end{theorem}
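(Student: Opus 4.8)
The plan is to verify the two directions of the equivalence by direct quaternionic computation, using the frame equations and the structure of $df^\pm$ as written in \eqref{eq:KPP}. First I would establish the necessary bookkeeping: the factor $(\pm\epsilon + f)$ is a quaternion whose square of the modulus is $\epsilon^2 + |f|^2$ (since $f \in \Imq\,\H$, so $f^2 = -|f|^2$ and $\bar f = -f$), hence multiplication $X \mapsto (\pm\epsilon - f)\,X\,(\pm\epsilon + f)$ for $X \in \Imq\,\H$ is an orientation-preserving conformal map of $\R^3$ with conformal factor $(\epsilon^2 + |f|^2)$; in particular $df^\pm$ is again $\Imq\,\H$-valued, so $f^\pm$ are well-defined $\R^3$-immersions once closedness is checked. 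Closedness of $df^*$ is given, and closedness of $df^\pm$ will follow because the $\epsilon^2$ term is $\epsilon^2\,df^*$, the $\epsilon^0$ term $-f\,df^*\,f$ is the dual of the inverted surface $(f^{-1})^*$ (a standard isothermic fact, equivalently checked from \eqref{eq:f_uv_technical}), and the cross term $\pm\epsilon(df^*\,f - f\,df^*)$ must be shown closed by a short computation using \eqref{eq:isothermicFrameEquationsDU}, \eqref{eq:isothermicFrameEquationsDV} and \eqref{eq:isothermic_qual_quaternionic}.

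For the direction \emph{($\Leftarrow$): \eqref{eq:KPP} produces a Bonnet pair}, I would compute the induced first and second fundamental forms of $f^\pm$ from \eqref{eq:KPP}. Writing $df^* = -f_u^{-1}\,du + f_v^{-1}\,dv = e^{-2h}(f_u\,du - f_v\,dv)$ and using conformality \eqref{eq:conformality_quaternionic}, the vectors $f^\pm_u = (\pm\epsilon - f)\,e^{-2h}f_u\,(\pm\epsilon + f)$ and $f^\pm_v = -(\pm\epsilon - f)\,e^{-2h}f_v\,(\pm\epsilon + f)$ are the images of $e^{-2h}f_u$ and $-e^{-2h}f_v$ under the same conformal map, so $\langle f^\pm_u, f^\pm_u\rangle = \langle f^\pm_v, f^\pm_v\rangle = (\epsilon^2+|f|^2)^2 e^{-4h}\cdot e^{2h}$ and $\langle f^\pm_u, f^\pm_v\rangle = 0$. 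Thus $f^+$ and $f^-$ share the \emph{same} induced metric $\tilde e^{2h} = (\epsilon^2+|f|^2)^2 e^{-2h}$, independent of the sign. Next I would compute the unit normals $n^\pm$: since the conformal map $(\pm\epsilon - f)(\,\cdot\,)(\pm\epsilon + f)$ acts on $\Imq\,\H$ as a rotation $R_\pm \in SO(3)$ times scaling, $n^\pm = R_\pm(n)$ where $n$ is (up to sign) the common normal direction $f_u \times f_v / e^{2h}$; I would then compute $-\langle df^\pm, dn^\pm\rangle$ and check that the mean curvature $H^\pm = \tfrac12(k_1^\pm + k_2^\pm)$ comes out equal for the two signs, while the Hopf differentials $Q^\pm$ differ — this is exactly where the $\pm\epsilon$ enters asymmetrically. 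This last step — the normal-vector and second-fundamental-form computation showing $H^+ = H^-$ but $Q^+ \ne Q^-$ — is the technical heart and the step I expect to be the main obstacle, since it requires carefully differentiating the rotation $R_\pm$ and is where all the quaternionic algebra has to come together cleanly.

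For the direction \emph{($\Rightarrow$): every Bonnet pair arises this way}, I would start from Proposition \ref{conditions_forQ_12}: a Bonnet pair has $Q_h = Q^+ - Q^-$ holomorphic and $|Q^+| = |Q^-|$. On a simply connected domain $D$ I can, after a holomorphic change of the complex coordinate $z$ adapted to $Q_h$, arrange $Q^+ = \tfrac12(\alpha + i)$, $Q^- = \tfrac12(\alpha - i)$ as in \eqref{eq:normalizationQ}, which in particular makes the average $\tfrac12(Q^+ + Q^-)$ real — so the ``average'' surface has real Hopf differential in this coordinate, i.e. is isothermic; this is the candidate for $f$ (after a Möbius/conformal adjustment built from $\epsilon$). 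More precisely, I would define $f$ and $\epsilon$ by inverting the algebraic relation \eqref{eq:KPP}: given $df^\pm$, the combination $\tfrac12(df^+ + df^-)$ and $\tfrac12(df^+ - df^-)$ yield (via \eqref{eq:dfpmEpsExpansion}) the pair $(\epsilon^2\,df^* - f\,df^*\,f)$ and $\epsilon(df^*\,f - f\,df^*)$, from which one reconstructs $f$ up to translation and $\epsilon$ up to sign, then verifies that the so-defined $f$ is isothermic (using $Q^\pm$ real in the normalized coordinate and the Codazzi equation \eqref{eq:Codazzi_Bonnet}) and that $df^*$ defined from it is closed. Finally one checks the reconstructed data satisfy \eqref{eq:KPP} identically, closing the loop. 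Throughout, I would rely on the uniqueness part of the Bonnet theorem (Theorem \ref{Bonnet.t}) to reduce ``the surfaces agree'' to ``their fundamental-form data agree,'' so that the whole argument reduces to the algebra of quaternionic conformal maps rather than solving PDEs.
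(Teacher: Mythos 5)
Your forward direction ($\Leftarrow$) follows the same route as the paper's proof: expand \eqref{eq:KPP} into the $\epsilon^0$, $\epsilon^1$, $\epsilon^2$ terms and check closedness of each, then use that conjugation by $\pm\epsilon+f$ is a scaled rotation to get conformality and the common metric $(\epsilon^2+|f|^2)^2e^{-2h}$. But the heart of the theorem --- equality of the mean curvatures --- is exactly the step you defer (``the main obstacle''), and you give no identity or mechanism that makes it work. The paper resolves it concretely: writing $S=\epsilon+f$, so that $df^+=\bar S\,df^*S$, $df^-=S\,df^*\bar S$, $n^+=S^{-1}nS$, $n^-=SnS^{-1}$, one compares $\<(f^\pm)_u,(n^\pm)_u>$ and finds that the two expressions differ only by terms of the form $\<n,[f^*_u,S_u\bar S]>$ versus $-\<n,[f^*_u,\bar S S_u]>$, whose equality reduces to the pointwise quaternionic identity $[f_u,\bar S S_u+S_u\bar S]=0$ (and the analogous statement in $v$). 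Without this (or an equivalent computation) the proposal does not yet prove $H^+=H^-$. Similarly, your route to non-congruence via $Q^+\neq Q^-$ is asserted but not argued; the paper instead gets non-congruence from the non-constancy of the rotation $T=(\epsilon+f)/(\epsilon-f)$ relating the frames (which tacitly requires $\epsilon\neq 0$). So for the direction the paper actually proves, your plan is the right skeleton but is missing its load-bearing computation.

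For the converse ($\Rightarrow$) you attempt more than the paper, which only cites \cite{KPP}; however, your sketch has a genuine gap. The step ``define $f$ and $\epsilon$ by inverting \eqref{eq:KPP}: the combinations $\tfrac12(df^++df^-)$ and $\tfrac12(df^+-df^-)$ yield $\epsilon^2df^*-f\,df^*f$ and $\epsilon[df^*,f]$'' is circular: those identifications presuppose that an isothermic $f$ with dual $f^*$ satisfying \eqref{eq:KPP} already exists, which is precisely what must be constructed. From the data $df^\pm$ alone you have two quaternionic $1$-form equations in the two unknowns $f$ and $df^*$, and extracting $f$ from them (equivalently, showing that the unit rotation field $T$ with $df^+=T^{-1}df^-T$ is of the Möbius form $(\epsilon+f)/(\epsilon-f)$ with $f$ isothermic) is the actual content of the KPP converse, not an algebraic inversion. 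Likewise, ``the average has real Hopf differential, so the average surface is isothermic'' presumes there is a surface realizing the averaged fundamental-form data, but $(e^{2h},\tfrac12(Q^++Q^-),H)$ need not satisfy the Gauss--Codazzi equations, so no such surface is available without further argument.
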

Since this theorem plays a crucial role in our construction, we give its proof in one direction, showing that the formulas (\ref{eq:KPP}) indeed give a Bonnet pair. For the proof in other direction see \cite{KPP}, and also \cite{Bob_Bonnet2008}, where a description of Bonnet pairs and isothermic surfaces in terms of loop groups is also presented.
\begin{proof}
To show are:  the forms $df^\pm$ are closed and give conformally parametrized surfaces, the surfaces $f^\pm$ are isometric and non-congruent, and that they have equal mean curvatures.

Let us decompose (\ref{eq:KPP}) as
$$
df^\pm=-f df^* f \pm \epsilon [df^*, f] +\epsilon^2 df^*
$$
and check that all three terms are closed.

The closedness of the first term $fdf^* f=f(-(f_u)^{-1}du + (f_v)^{-1}dv)f$ is equivalent to $-(f (f_u)^{-1}f)_v= (f (f_v)^{-1}f)_u$. The latter identity follows from the conformality (\ref{eq:f_uv_technical}) and (\ref{eq:conformality_quaternionic}). The closedness condition for the second term $[df^*, f] $ is $-[f,(f_u)^{-1}]_v=[f,(f_v)^{-1}]_u$. It also follows from (\ref{eq:conformality_quaternionic}, \ref{eq:f_uv_technical}). The closedness of the last term was already shown (\ref{eq:isothermic_qual_quaternionic}).

Since (\ref{eq:KPP}) is a scaled rotation, the conformality of the frames $df^\pm$ follows from the conformality of the frame $df^*$.

The surfaces $f^\pm$ are isometric because their frames are related by the rotation $df^+=T^{-1}df^-T$ where $T=(\epsilon +f)/(\epsilon -f)\in \H_1$. As $T$ is non-constant $f^\pm$ are non-congruent.

The surfaces $f^\pm$ are isometric, so equality of their mean curvatures (\ref{eq:mean_curvature}) is equivalent to $$\<(f^+)_{uu}+(f^+)_{vv},n^+> =\<(f^-)_{uu}+(f^-)_{vv},n^->,$$ where $n^\pm$ are the respective Gauss maps for $f^\pm$. Setting $S=\epsilon +f$, with $S^{-1}=\bar{S}/|S|^2$, we have
$$
df^+=\bar{S}df^* S, \quad df^-=S df^* \bar{S}, \quad n^+=S^{-1} n S,\quad n^-= S n S^{-1},
$$
where $n$ is the Gauss map of $f^*$ (and $f$). We obtain 
\begin{eqnarray*}
\<(f^+)_u,(n^+)_u>&=&\<f^*_u, n_u |S|^2+[n,S_u \bar{S}]>\\&=&\<f^*_u, n_u |S|^2>-\<n, [f^*_u,S_u \bar{S}]> \text{ and }\\
\<(f^-)_u,(n^-)_u>&=&\<f^*_u, n_u |S|^2-[n,\bar{S}S_u ]>\\&=&\<f^*_u, n_u |S|^2>+\<n, [f^*_u,\bar{S}S_u]>.
\end{eqnarray*}
These two expressions are equal 
 if 
$
[f_u,\bar{S}S_u+S_u \bar{S}]=0.
$
The last identity is easy to check directly.

Analogously we obtain $\<(f^+)_v,(n^+)_v>=\<(f^-)_v,(n^-)_v>$, and thus equality of the mean curvatures $H^+=H^-$.
\end{proof}

\subsection{Periodicity conditions}

Now we pass to the global theory of isothermic surfaces and Bonnet pairs. We are mostly interested in the case of tori ${\cal R}=\C/{\cal L}$. If $\gamma$ is a homologically nontrivial cycle on $\cal R$ that is closed on the isothermic surface $f:{\cal R}\to \R^3$, then the corresponding curves are closed on the Bonnet pair if $\int_\gamma df^\pm=0$, i.e.,
\begin{eqnarray}
&{\rm (}\Apart{}  {\rm - periodicity \ condition)} \ &\int_\gamma -f df^* f +\epsilon^2 df^*=0 \text { and }  \label{eq:A-condition}\\
&{\rm (}\Bpart{}  {\rm - periodicity \ condition)} \ &\int_\gamma [df^*, f] =2 \int_\gamma \Imq ( df^* f )=0. \label{eq:B-condition}
\end{eqnarray}

Let us start with some simple observations. 
\begin{remark}
Varying the parameter $\epsilon$ in formula (\ref{eq:KPP}) is not essential. It is equivalent to scaling the surface $f\to f/\epsilon$.
\end{remark}

\begin{proposition}
The map $f\to -f^{-1}$ is the inversion of $\R^3\cup \infty$  in the unit sphere. This map is conformal and maps isothermic surfaces to isothermic surfaces. Thus if $f:\C/{\cal L}\to {\rm Im}\ {\H}$ is an isothermic torus then $f^{-1}:\C/{\cal L}\to {\rm Im}\ {\H}$ is also an isothermic torus. It and its dual are given by 
\begin{eqnarray}
d(f^{-1})=-f^{-1}f_u f^{-1} du - f^{-1} f_v f^{-1} dv, \nonumber \\ 
d(f^{-1})^*=f(f_u)^{-1}f du - f(f_v)^{-1} f dv. \label{eq:f^{-1}^*}
\end{eqnarray}
\end{proposition}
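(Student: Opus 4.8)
The plan is to verify the four assertions in turn: that $f\mapsto -f^{-1}$ is inversion in the unit sphere, that it is conformal and preserves isothermicity, that the torus property survives, and finally to read off the two differential formulas by a short quaternionic computation. For a nonzero imaginary quaternion $X\in\mathrm{Im}\,\H$ one has $\bar X=-X$ and $X^{-1}=\bar X/|X|^2=-X/|X|^2$, hence $-X^{-1}=X/|X|^2$. This is precisely the inversion $X\mapsto X/|X|^2$ of $\R^3$ in the unit sphere, extended to $\R^3\cup\infty$ by swapping $0$ and $\infty$.

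Such an inversion is a conformal (Möbius) transformation of $\R^3\cup\infty$; since Möbius transformations carry the family of principal curvature spheres of a surface to itself, they preserve curvature line nets, and, being conformal, they preserve conformal parametrizations as well. Hence isothermicity (conformal plus curvature line, cf.\ \eqref{eq:f_uv-isothermic}) is Möbius invariant, and if $z=u+\ci v$ is an isothermic coordinate for $f$ then the same $z$ is an isothermic coordinate for $-f^{-1}$. For a self-contained argument one may instead check the criterion \eqref{eq:f_uv_technical} directly for $g=f^{-1}$, using $(f^{-1})_u=-f^{-1}f_uf^{-1}$ together with the conformality relations \eqref{eq:conformality_quaternionic}; since $f$ is imaginary, $f^{-2}=-1/|f|^2$ is a real scalar, which reduces the computation to the corresponding identity for $f$.

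After a translation of $\R^3$ (which changes neither conformality nor isothermicity) we may assume $0\notin f(\C/\mathcal L)$; then $-f^{-1}$ is smooth on all of $\C/\mathcal L$, is again an immersion because inversion is a local diffeomorphism away from the origin, and is manifestly $\mathcal L$-periodic, being a pointwise function of $f$. Together with the previous paragraph this gives that $f^{-1}:\C/\mathcal L\to\mathrm{Im}\,\H$ is an isothermic torus (the overall sign is immaterial, as $f$ and $-f$ are congruent). For the differential, differentiating $f f^{-1}=\mathbf 1$ yields $(f^{-1})_u=-f^{-1}f_uf^{-1}$ and $(f^{-1})_v=-f^{-1}f_vf^{-1}$, which is the first displayed formula. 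For its dual, apply \eqref{eq:isothermic_qual_quaternionic} to $g=f^{-1}$, namely $dg^*=-(g_u)^{-1}du+(g_v)^{-1}dv$: from $g_u=-f^{-1}f_uf^{-1}$ one gets $(g_u)^{-1}=-f(f_u)^{-1}f$ and likewise $(g_v)^{-1}=-f(f_v)^{-1}f$, and substituting gives $d(f^{-1})^*=f(f_u)^{-1}f\,du-f(f_v)^{-1}f\,dv$, the second displayed formula.

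I expect the only genuine subtlety to be the claim that $f^{-1}$ is isothermic in the \emph{same} coordinate $z$, i.e.\ that inversion preserves the curvature line parametrization. This is classical (Möbius invariance of isothermic surfaces), but if one insists on a direct verification it is the single step requiring an honest quaternionic computation via \eqref{eq:f_uv_technical}; everything else is bookkeeping with the product rule and the conformality relations \eqref{eq:conformality_quaternionic}.
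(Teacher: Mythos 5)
Your proposal is correct and follows essentially the paper's route: the paper likewise invokes M\"obius invariance of the isothermic class, notes conformality of $f^{-1}$, and verifies the curvature-line condition \eqref{eq:f_uv-isothermic} for $f^{-1}$ by a direct computation (yielding $(f^{-1})_{uv}=(\alpha+2\langle f^{-1},f_v\rangle)(f^{-1})_u+(\beta+2\langle f^{-1},f_u\rangle)(f^{-1})_v$), with the two displayed differential formulas obtained, as you do, from the chain rule and \eqref{eq:isothermic_qual_quaternionic}. One small caveat: the step ``after a translation we may assume $0\notin f(\C/\mathcal L)$'' is not a legitimate reduction, since translating $f$ replaces $f^{-1}$ by $(f+c)^{-1}$, a different surface; rather, the hypothesis that $f$ omits the origin is implicit in the assertion that $f^{-1}:\C/\mathcal L\to{\rm Im}\,\H$ is a torus.
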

\begin{proof}
The class of isothermic immersions belongs to M\"obius geometry and is invariant under M\"obius transformations, in particular under $f\to -f^{-1}$.  We see that $f^{-1}(u,v)$ is conformal, and by direct computation one can check that (\ref{eq:f_uv-isothermic}) with $f_{uv} = \alpha f_u + \beta f_v$ for $\alpha,\beta \in R$ implies 
$$
(f^{-1})_{uv}=(\alpha +2\<f^{-1},f_v>)(f^{-1})_u+(\beta +2\<f^{-1},f_u>)(f^{-1})_v.
$$
Thus, $f^{-1}(u,v)$ is isothermic.
\end{proof}

\begin{remark}
Substituting $f^{-1}$ instead of $f$ into (\ref{eq:KPP}) and using (\ref{eq:f^{-1}^*}) we observe that the isothermic surfaces $f$ and $f^{-1}$  generate the same $\epsilon$-set of Bonnet pairs. 
\end{remark}

Formula (\ref{eq:f^{-1}^*}) implies the following periodicity property.
\begin{corollary} \label{cor:A-condition}
Let $f:\C/{\cal L}\to {\rm Im}\ {\H}$ be an isothermic torus such that the dual isothermic surfaces $f^*$ and $(f^{-1})^*$ are also tori. Then the $\Apart{}$-periodicity condition (\ref{eq:A-condition}) for Bonnet pairs is satisfied for any cycle on $\C/{\cal L}$ and for any $\epsilon$. Moreover,
\begin{equation} \label{eq:A-condition_integrated}
\int -f df^* f +\epsilon^2 df^*= (f^{-1})^*+\epsilon^2 f^*.
\end{equation} 
\end{corollary}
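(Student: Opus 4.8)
The plan is to show that the $\Apart{}$-periodicity one-form in \eqref{eq:A-condition} is not merely closed but exact on $\C/\mathcal{L}$ with a \emph{single-valued} primitive, so that its integral over every cycle vanishes automatically. Concretely, I will identify this primitive explicitly as $(f^{-1})^*+\epsilon^2 f^*$, which is exactly the content of \eqref{eq:A-condition_integrated}; the periodicity conclusion is then a one-line consequence of the hypothesis.

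First I would record the two differentials that are already available: from \eqref{eq:isothermic_qual_quaternionic}, $df^*=-(f_u)^{-1}\,du+(f_v)^{-1}\,dv$, and from \eqref{eq:f^{-1}^*}, $d(f^{-1})^*=f(f_u)^{-1}f\,du-f(f_v)^{-1}f\,dv$. Substituting the former into $-f\,df^*\,f$ and comparing termwise with the latter gives the pointwise identity of one-forms
\[
-f\,df^*\,f \;=\; f(f_u)^{-1}f\,du-f(f_v)^{-1}f\,dv \;=\; d(f^{-1})^*,
\]
so nothing beyond \eqref{eq:f^{-1}^*} is needed here. Adding $\epsilon^2\,df^*=d(\epsilon^2 f^*)$ then yields
\[
-f\,df^*\,f+\epsilon^2\,df^* \;=\; d\bigl((f^{-1})^*+\epsilon^2 f^*\bigr),
\]
which is precisely the indefinite-integral assertion \eqref{eq:A-condition_integrated}.

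It remains to argue the periodicity claim. By the proposition on the inversion $f\mapsto -f^{-1}$, the surface $f^{-1}$ is again isothermic, so $d(f^{-1})^*$ is closed and the (a priori only local) primitive $(f^{-1})^*$ exists; likewise $f^*$ exists as the Christoffel dual of $f$. The hypothesis of the corollary is exactly that both $(f^{-1})^*$ and $f^*$ are tori, i.e., descend to well-defined single-valued maps on $\C/\mathcal{L}$. Hence the right-hand side above is an exact one-form on $\C/\mathcal{L}$ with single-valued primitive, and for every cycle $\gamma$ on $\C/\mathcal{L}$ and every $\epsilon\in\R$ we get $\int_\gamma\bigl(-f\,df^*\,f+\epsilon^2\,df^*\bigr)=\int_\gamma d\bigl((f^{-1})^*+\epsilon^2 f^*\bigr)=0$.

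I do not expect a genuine obstacle: once \eqref{eq:f^{-1}^*} is in hand this is a bookkeeping step. The only point that deserves emphasis is that the conclusion truly needs the hypothesis that $f^*$ and $(f^{-1})^*$ be globally defined on the torus — for a generic isothermic torus the Christoffel dual exists only on the universal cover, and then the primitive $(f^{-1})^*+\epsilon^2 f^*$ would acquire nonzero translational periods along the cycles of $\C/\mathcal{L}$, so the $\Apart{}$-condition would fail.
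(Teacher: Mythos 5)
Your proposal is correct and follows the same route the paper intends: the identity $-f\,df^*\,f=d(f^{-1})^*$ read off from \eqref{eq:f^{-1}^*}, addition of $\epsilon^2\,df^*=d(\epsilon^2 f^*)$ to get \eqref{eq:A-condition_integrated}, and vanishing of all periods because the primitive $(f^{-1})^*+\epsilon^2 f^*$ is single-valued on $\C/\mathcal{L}$ by hypothesis. Nothing further is needed.
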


To construct Bonnet pair tori we will consider isothermic tori with closed curvature lines ($u$-lines and $v$-lines). They are given by a doubly periodic immersion $f:\C/{\cal L}\to {\rm Im}\ {\H}$, satisfying $f(u+\rm U,v)=f(u, v+{\rm  V})=f(u,v)$. The $\Bpart{}$-periodicity condition \eqref{eq:B-condition} becomes
\begin{eqnarray*}
\int_0^{\rm U} (f(f_u)^{-1}-(f_u)^{-1} f)du =0 \text{ and }
\int_0^{\rm V} ((f_v)^{-1} f - f(f_v)^{-1})dv =0.
\end{eqnarray*}

\section{Bonnet periodicity conditions when f has one generic family of closed planar curvature lines}
\label{sec:periodicty-conditions-planar-u}
 
This section describes our main insight: examples of compact Bonnet pairs that are tori arise from isothermic tori with one family of planar curvature lines.

First, we summarize the classification and global geometry of an isothermic cylinder $f(u,v)$ with one generic family of closed planar curvature lines, as described in our paper~\cite{short-isothermic-planar}.

Second, we build a Bonnet pair $f^+(u,v), f^-(u,v)$ from an isothermic cylinder $f(u,v)$ with one family of closed planar curvature lines.  We see the geometric construction and global properties of $f(u,v)$ vastly simplify the Bonnet periodicity conditions.

\subsection{Geometry of an isothermic surface with one generic family of closed planar curvature lines}
Construct an isothermically parametrized cylinder $f$ as follows. Consider a particular family of periodic curves $\gamma(\cdot, \w)$ satisfying $\gamma(u,\w)= \gamma(u + 2\pi,\w)$ expressed in the $1,\qi$-plane. Now, traverse these curves using a \emph{reparametrization} function $\w: \R \to \R, v \mapsto \w(v)$. Multiply by $\qj$ to place each curve into the $\qj, \qk$-plane and, simultaneously, rotate using a unit-quaternion valued function $\Phi: \R \to \H_1, v \mapsto \Phi(v)$. The curvature line planes therefore intersect in the origin, forming a cone, and generically span $\R^3$. Explicitly,
\begin{equation}
	\begin{aligned}
		\label{eq:fBasicStructure}
		f(u,v) &= \Phi(v)^{-1} \gamma(u,\w(v)) \qj \Phi(v) \text{ where } \\
		\Phi'(v) \Phi^{-1}(v) &= \sqrt{1 - \w'(v)^2} W_1(\w(v)) \qk.
	\end{aligned}	
\end{equation}
The term $\ci W_1(\w(v)) \qj$ lies in the $\qj,\qk$-plane and expresses the tangent line to the cone about which the corresponding plane infinitesimally rotates. The logarithmic derivative $\gamma_z/\gamma$ is a particular elliptic function of $z = u + \ci \w$ and $\overline{\gamma(z)}=-\gamma(\bar z)$.

It turns out that every isothermic cylinder with one generic family of closed planar curvature lines is constructed this way. The possible families of plane curves $\gamma(\cdot,\w)$ are given explicitly in terms of theta functions on an elliptic curve with rhombic period lattice. There is an open interval of such lattices.

We now state the classification and geometric results from our paper on isothermic surfaces with one family of planar curvature lines~\cite{short-isothermic-planar}. For theta functions $\vartheta_i$ we use the convention of Whittaker and Watson ~\cite[Sec. 21.11]{whittaker_watson_1996}, where they are defined on a lattice spanned by $\pi$ and $\tau \pi$ with nome $q = e^{\ci \pi \tau}$.

\begin{theorem}[\protect{\cite[Theorem 5]{short-isothermic-planar}}]
	\label{thm:planarIsothermicCylinderFormulas}
	Every real analytic isothermic cylinder with one generic family ($u$-curves) of closed planar curvature lines is given in isothermic parametrization by the following formulas.
	\begin{align}
		f(u,v) &= \Phi^{-1}(v) \gamma(u,\w(v)) \qj \Phi(v), 
		\label{eq:isothermicCylinder}\\
		\gamma(u,\w) &=  -\ci \frac{2 \vartheta _2(\omega)^2}{\vartheta_1^{\prime}(0) \vartheta _1(2 \omega)} \frac{\vartheta_1\left(\frac{1}{2} (u+\ci \w-3 \omega)\right)}{\vartheta_1\left(\frac{1}{2} (u+\ci\w+\omega)\right)},
		\label{eq:gammaClosedCurves} \\
		\Phi'(v) \Phi^{-1}(v) &= \sqrt{1-\w'(v)^2} W_1(\w(v)) \qk, \label{eq:phiPrimePhiInverse}\\
		W_1(\w) &=\ci \frac{\vartheta _1^{\prime}(0)}{2\vartheta _2(\omega)}\frac{\vartheta _2(\omega -\ci \w)}{\vartheta_1(\ci \w)}.
	\end{align}
	The theta functions $\vartheta_i(z | \tau)$ are defined on an elliptic curve of rhombic type spanned by $\pi$ and $\pi \tau$ with $\tau = \frac12 + \ci \R$ satisfying $0 < \Imc \tau < \Imc \tau_0 \approx 0.3547$ (defined by $\vartheta_2''(0 | \tau_0) = 0$). The parameter $\omega$ is the unique \emph{critical} $\omega \in (0, \pi/4)$ satisfying $\vartheta _2^{\prime}(\omega | \tau) = 0$. The curvature line planes are tangent to a cone with apex at the origin. The function $\w(v)$ is a $\tau$-admissible reparametrization function.
\end{theorem}

For each rhombic lattice, we emphasize the functional freedom given by the reparametrization function $v \mapsto \w(v)$. To ensure the isothermic surface is real analytic we mildly restrict to $\tau$-admissible reparametrization functions.

\begin{definition}[\protect{ \cite[real analytic version of Definition 3]{short-isothermic-planar}}]	\label{def:admissibleReparametrizationFunction} Fix $\tau \in \frac12 + \ci \R$ such that $0 < \Imc \tau < \Imc \tau_0 \approx 0.3547$. A map $\w: \R \to \R$ is called a $\tau$-\emph{admissible reparametrization function} if
	\begin{enumerate}
		\item $\w: \R \to (0, 2 \pi \Imc \tau)$ is a real analytic function and
		\item $\sqrt{1-\w'(\cdot)^2}: \R \to \R$ is also a real analytic function.
	\end{enumerate}
\end{definition}

\begin{remark} 
\label{re: psi}
We make three remarks about this definition.
	\begin{itemize}
		\item The range of $\w$ is bounded because the closed curve $\gamma(\cdot, \w)$ degenerates for $\w = 0$ and $\w = 2\pi \Imc \tau$.
		
		\item We necessarily have $|\w'(v)| \leq 1$ for all $v \in \R$. Note that the associated square root function $\sqrt{1-\w'(\cdot)^2}$ can change sign at $v_0$ where $\w'(v_0) = 1$.
		
		\item To close the isothermic cylinders into tori we will consider $\tau$-admissible reparametrization functions that are periodic. These are essentially all periodic real analytic functions. If $\psi: \S^1 \to \R$ is a
		real analytic function then after appropriate renormalization to $\tilde \psi$ one can define $\w'(v) := \cos(\tilde \psi(v))$ and $\sqrt{1-\w'(v)^2} := \sin(\tilde \psi(v))$ so that $\w: \S^1 \to (0, 2 \pi \Imc \tau)$ is a periodic $\tau$-admissible reparametriztion function.
	\end{itemize}
\end{remark}

We summarize the conformal frame of $f$. The metric $e^{2h}$ and Gauss map $n$ will be used in the next section to better understand the periodicity conditions of the Bonnet pair surfaces $f^\pm$.

\begin{corollary}[\protect{\cite[Corollary 3]{short-isothermic-planar}}]
	Let $f(u,v)$ be an isothermic cylinder with one generic family of planar curvature lines, as in Theorem~\ref{thm:planarIsothermicCylinderFormulas}. Then its frame is 
	\begin{align}
		\label{eq:fuIsothermicPlanarClosed}
		f_u(u,v) &= e^{h(u,\w(v))} \Phi(v)^{-1} e^{\ci \sigma(u,\w(v))} \qj \Phi(v), \\
		\label{eq:fvIsothermicPlanarClosed}
		f_v(u,v) &= e^{h(u,\w(v))} \Phi(v)^{-1} \left( \sqrt{1-\w'(v)^2} \, \qi + \w'(v) \, e^{\ci \sigma(u,\w(v))} \qk \right) \Phi(v), \\
		\label{eq:nIsothermicPlanarClosed}
		n(u,v) &= \Phi(v)^{-1} \left( \w'(v) \, \qi - \sqrt{1-\w'(v)^2} \, e^{\ci \sigma(u,\w(v))} \qk \right) \Phi(v),
	\end{align}
	and the family of closed planar curves $\gamma(u,\w)$ satisfies the following.
	\begin{align}
		\label{eq:gammaUClosedCurves}
		\gamma_u(u,\w) &= -\ci \gamma_{\w}(u,\w) = e^{h(u,\w) + \ci \sigma(u,\w)} = -\ci \left(\frac{\vartheta_2\left(\frac{u + \ci \w - \omega}{2}\right)}{\vartheta_1\left(\frac{u + \ci \w + \omega}{2}\right)}\right)^2, \\
		\label{eq:etohClosedCurves}
		e^{h(u,\w)} &= \frac{\vartheta_2\left(\frac{u + \ci \w - \omega}{2}\right)}{\vartheta_1\left(\frac{u + \ci \w + \omega}{2}\right)}\frac{\vartheta_2\left(\frac{u - \ci \w - \omega}{2}\right)}{\vartheta_1\left(\frac{u - \ci \w + \omega}{2}\right)},\\
		\label{eq:etoisigmaClosedCurves}
		e^{\ci \sigma(u,\w)} &= -\ci \frac{\vartheta_2\left(\frac{u + \ci \w - \omega}{2}\right)}{\vartheta_1\left(\frac{u + \ci \w + \omega}{2}\right)}\frac{\vartheta_1\left(\frac{u - \ci \w + \omega}{2}\right)}{\vartheta_2\left(\frac{u - \ci \w - \omega}{2}\right)}.
	\end{align}
\end{corollary}

Moreover, the global geometry of these isothermic cylinders is as follows.

\begin{theorem}[\protect{\cite[Theorem 6]{short-isothermic-planar}}]
	\label{thm:planarIsothermicCylinderGeometry}
	Every real analytic isothermic cylinder $f(u,v)$ with one generic family of closed planar curvature $u$-lines has the following geometric properties:
	\begin{enumerate}
		\item 	 The $v$-curve defined by critical $u = \omega$ lies on a sphere of radius $| R(\omega)|$, where $R(\omega)$ is the real number given by
		\begin{align} R(\omega) = \frac{2 \vartheta _2(\omega)^2}{\vartheta_1^{\prime}(0) \vartheta _1(2 \omega)} \label{eq:radiusOmega}
		\end{align}
		Moreover, $f(\omega,v)$ and $f_u(\omega,v)$ are parallel and satisfy
		\begin{align}
			\frac{1}{R(\omega)}f(\omega,v) = -\frac{f_u(\omega,v)}{|f_u(\omega,v)|}. \label{eq:fomegafuParallel}
		\end{align}
		\item Define $f^{\textup{inv}} = -R(\omega)^2 f^{-1}$ as the inversion of $f$ in the sphere of radius $| R(\omega) |$. Then,
		\begin{align}
			\label{eq:fInvUShift}
			f^{\textup{inv}}(u, v) = f(2 \omega-u, v).
		\end{align}
		In other words, this inversion maps $f$ onto itself and is an involution. The plane of each $u$-curve is mapped to itself.
		\item A Christoffel dual $f^*$ of $f$, with $(f^*)_u = \frac{f_u}{| f_u |^2}$ and $(f^*)_v = - \frac{f_v}{| f_v |^2}$, is 
		\begin{align}
			\label{eq:fDualUShift}
			f^*(u,v) = - f(\pi - u, v). 
		\end{align}
		In other words, this duality maps $f$ onto (minus) itself and is an involution. The plane of each $u$-curve is mapped to itself.
	\end{enumerate}
\end{theorem}

Figure~\ref{fig:uCurvePlaneDualAndInverse} illustrates that the spherical inversion and dualization operations map each planar curvature line, and therefore the entire surface, onto (minus) itself. 

\begin{figure}[t]
	\centering
	\includegraphics[width=0.7\hsize]{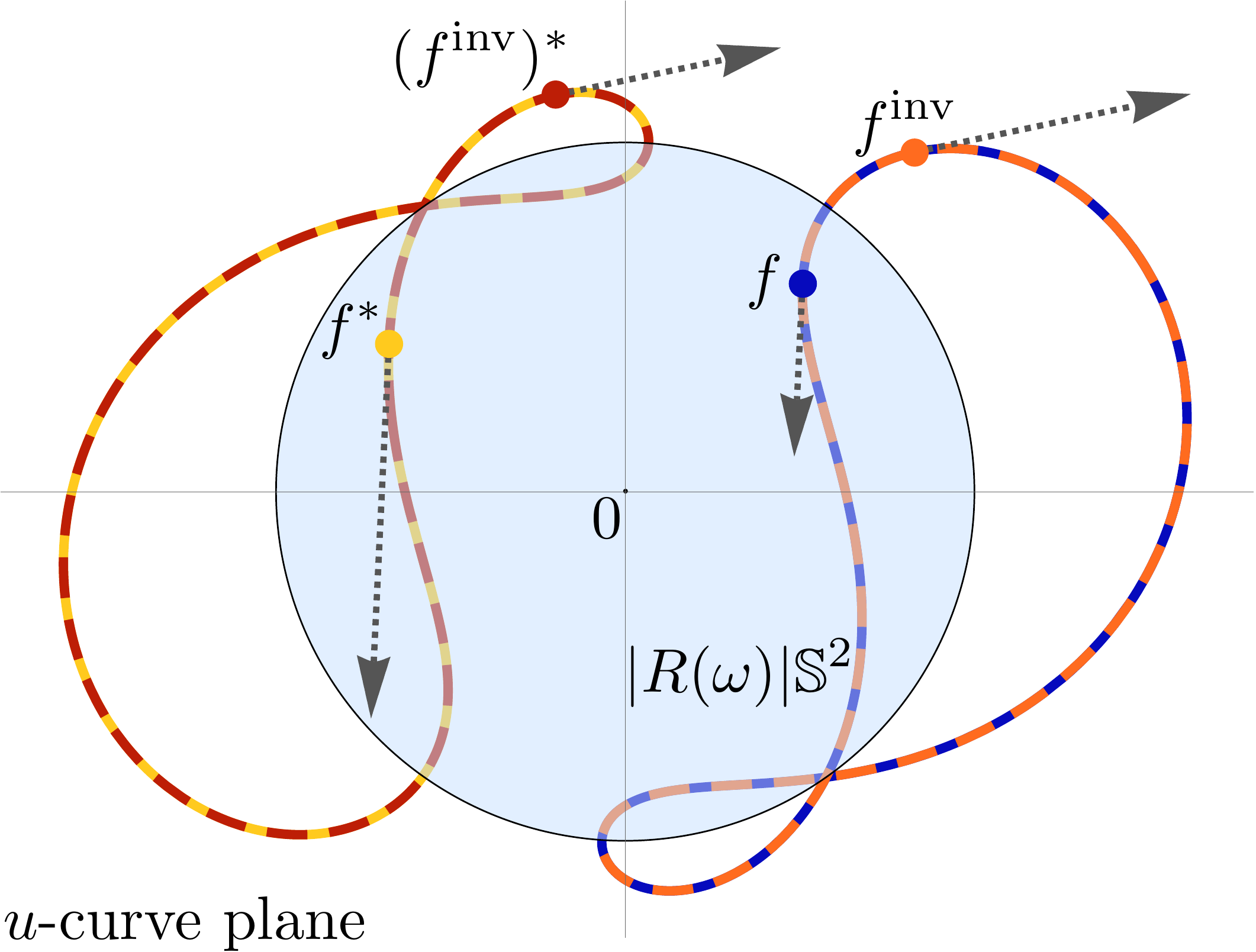}
	\caption{A $u$-curve plane of an isothermic surface $f$ with one generic family of closed planar curvature lines. The transformations $(\cdot)^{\textup{inv}}$, inversion in the sphere of radius $| R(\omega) |$ centered at the origin, and $(\cdot)^*$ a Christoffel dualization, are noncommuting involutions that map every closed planar curvature line onto (possibly minus) itself.}
	\label{fig:uCurvePlaneDualAndInverse}
\end{figure}

These global symmetries have remarkable consequences for the Bonnet periodicity conditions. In particular, when $f$ is a torus the $\Apart{}-$periodicity conditions \eqref{eq:A-condition} are automatically satisfied, as shown in Corollary~\ref{cor:A-condition}. The geometric construction of $f$ also has strong consequences for the $\Bpart{}-$periodicity conditions \eqref{eq:B-condition}. We will describe these in more detail after discussing how to close $f(u,v)$ into a torus. From \eqref{eq:fBasicStructure} we see that for $f(u,v)$ to be a torus, it is necessary that the $\tau$-admissible reparametrization function $\w(v)$ is periodic. We define the fundamental piece as the trace of $f(u,v)$ after one period of $\w(v)$.
\begin{definition}
	\label{def:isothermicCylinderFromAFundamentalPiece}
	We say $f(u,v) = \Phi(v)^{-1} \gamma(u,\w(v)) \qj \Phi(v)$ is an \emph{isothermic cylinder from a fundamental piece} if
	\begin{enumerate}
		\item $f(u,v)$ is an isothermic cylinder with one generic family ($u$-curves) of closed planar curvature lines as in Theorem~\ref{thm:planarIsothermicCylinderFormulas} and 
		\item $\w(v)$ has period $\V$, but $f$ is not closed after only one period, i.e., 
		\begin{align*}
			\w(v + \V) = \w(v) \quad \text{ and } \quad	f(u,v + \V) \neq f(u,v). 
		\end{align*}
		The \emph{fundamental piece $\FP$} is the parametrized cylinderical patch
		\begin{align*}
			\FP = \{ f(u,v) \, \big \vert \, u \in [0,2\pi] \text{ and } v \in [0, \V]\}.
		\end{align*}
	\end{enumerate}
	The \emph{axis} $A$ and \emph{generating rotation angle} $\angleFP \in [0,\pi]$ are defined by the monodromy matrix of the ODE for $\Phi$ \eqref{eq:phiPrimePhiInverse}:
	\begin{equation}
		\label{eq:monodromy}
		\Phi(0)^{-1}\Phi(\V) = \cos (\angleFP/2) + \sin (\angleFP/2) \axisDir.
	\end{equation}
	
\end{definition}

An isothermic cylinder from a fundamental piece $\Pi$ is extended by piecing together congruent copies of $\Pi$ via a fixed rotation.

\begin{lemma}
	\label{lem:rotationalSymmetryIsothermicCylinderFromFundamentalPiece}
	Let $f$ be an isothermic cylinder from a fundamental piece $\FP$ with axis $\axisDir$ and angle $\angleFP$. Define the rotation quaternion $R = \cos (\angleFP/2) + \sin (\angleFP/2) \axisDir$. Then for all $1 < k \in \N, u \in [0,2\pi],$ and $v \in [0, \V]$ we have
	\begin{equation}
		\label{eq:rotation_f}
		f(u,v+k \V) = R^{-k} f(u,v) R ^{k}.
	\end{equation}
\end{lemma}
\begin{proof}
	The traversed family of planar $u$-curves only depends on $\w(v)$, therefore $\gamma(u,v) = \gamma(u, v + \V)$, and
	$
	f(u,v+k \V)
	= \Phi(v + k \V)^{-1} \gamma(u,\w(v)) \qj \Phi(v + k \V).
	$	
	The frame $\Phi(v)$ is integrated from the ODE \eqref{eq:phiPrimePhiInverse}. When $\w(v)$ is periodic this ODE has periodic coefficients 
	with monodromy matrix $\Phi(0)^{-1}\Phi(\V)$. In other words, 
	\begin{align}
		\label{eq:phiMonodromyForAllV}
		\Phi(v + k \V) = \Phi(v) \left( \Phi(0)^{-1}\Phi(\V) \right)^{k}.
	\end{align}
	Continuing the calculation from above, we arrive at \eqref{eq:rotation_f}.
	This formula desribes the $k$-times rotation with the axis $\axisDir$ and generating rotation angle $\theta \in [0,\pi]$.
\end{proof}

Thus, closing the isothermic cylinder into a torus is a rationality condition, as shown in Figure~\ref{fig:fundamental-piece-axis}.

\begin{lemma}
\label{lem:rationalClosingIsothermicCylinderFromFundamentalPiece}
	An isothermic cylinder $f$ from a fundamental piece is a torus if and only if the generating rotation angle $\angleFP \in [0,\pi]$ satisfies $k \angleFP \in 2\pi \N $ for some $k \in \N$ so the $v$-period is $k \V$.
\end{lemma}

\begin{figure}
	\centering
	\includegraphics[width=\hsize]{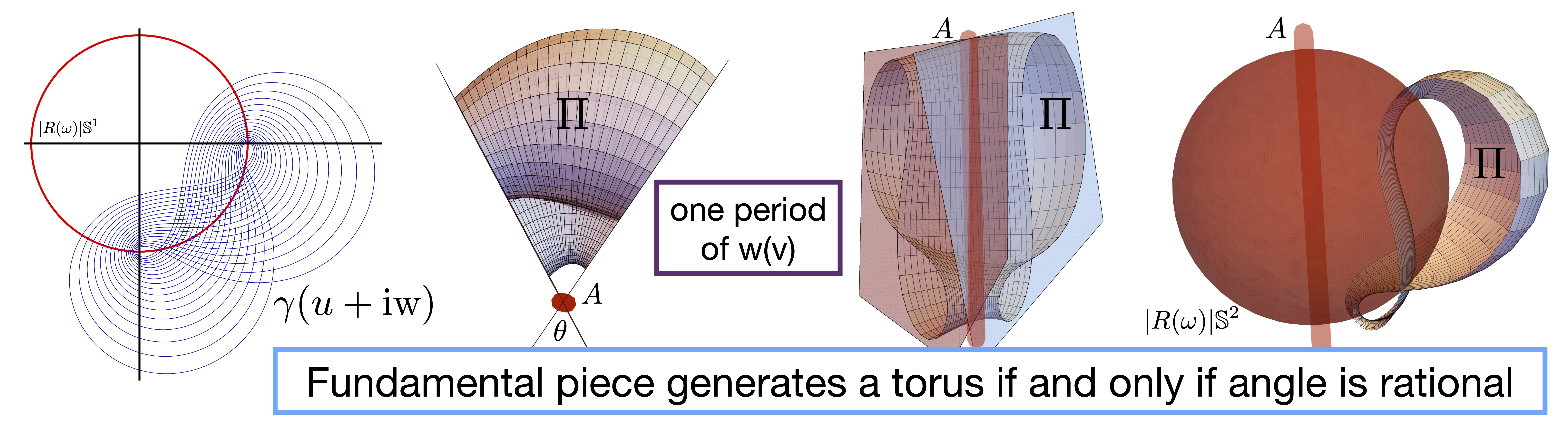}
	\caption{Left: A family of closed curves $\gamma(\cdot, \w)$ with its symmetry circle. Middle: Two views of a fundamental piece $\Pi$ with axis $\axisDir$ and angle $\angleFP$ traced out by one period of a periodic $\tau$-admissible reparametrization function $\w(v)$. Right: A third view of the fundamental piece showing the axis and sphere symmetry.}
	\label{fig:fundamental-piece-axis}
\end{figure}

\subsection{The corresponding Bonnet pair cylinders}

The global symmetries, stated in Theorem~\ref{thm:planarIsothermicCylinderGeometry}, of an isothermic cylinder $f$ with one generic family of closed planar curvature lines have remarkable consequences for the Bonnet periodicity conditions~\eqref{eq:A-condition} and~\eqref{eq:B-condition}.

\begin{theorem}
	\label{thm:BPFromPlanarFormulas}
	Let $f(u,v) = \Phi(v)^{-1} \gamma(u,\w(v)) \qj \Phi(v)$ be a real analytic isothermic cylinder with one generic family of closed planar curvature lines as in Theorem~\ref{thm:planarIsothermicCylinderFormulas}. For each $\epsilon \in \R$, the resulting Bonnet pair surfaces $f^\pm(u,v)$ are real analytic cylinders with translational periods in $v$ that are equal up to sign. Their immersion formulas are:
	\begin{align}
		\begin{aligned}
			\label{eq:fPlusMinusBasicStructure}
	f^{\pm}(u,v) = &\overbrace{R(\omega)^2f(\pi-2\omega + u, v) - \epsilon^2 f(\pi - u, v)}^{\Apart{} = (f^{-1})^* + \epsilon^2 f^*} \pm \underbrace{2\epsilon \left( \Phi^{-1}(v) \hat B(u,\w(v)) \qi \Phi(v)  + \tilde B(v) \right)}_{\Bpart{} = 2 \epsilon \int \Imq ( df^* f )},
		\end{aligned}
	\end{align}
	where 
$		R(\omega) = \frac{2 \vartheta _2(\omega)^2}{\vartheta_1^{\prime}(0) \vartheta _1(2 \omega)}$~\eqref{eq:radiusOmega}, $\hat B(u,\w(v))$ is a real analytic real-valued function that is $2\pi$-periodic in $u$, and $\tilde B(v)$ is a real analytic $\R^3$-valued function that depends only on $v$.
\end{theorem}
\begin{proof}
	The proof of this theorem, together with explicit formulas for $\hat B(u,\w(v))$ and the ODE determining $\tilde B(v)$ are given in Appendix~\ref{sec:explicit-bonnet-formulas}.
\end{proof}

Theorem~\ref{thm:BPFromPlanarFormulas} shows that an isothermic cylinder $f(u,v)$ with one generic family of closed planar curvature lines gives rise to Bonnet pair surfaces $f^\pm(u,v)$ that are cylinders. All three surfaces $f(u,v)$ and $f^\pm(u,v)$ are $2\pi$-periodic in $u$.

Moreover, when $f$ is a torus, both $\Phi$ and $\w$ are periodic. So, if $f$ is a torus, then the closing of both Bonnet pair cylinders $f^\pm$ reduces to a single real-valued integral in terms of $\tilde B$. This integral is computed along the spherical $v$-curve $f(\omega,v)$, recall Theorem~\ref{thm:planarIsothermicCylinderGeometry}, on the isothermic surface.

\begin{lemma}
	\label{lem:reductionToBPartAlongAVCurve}
	Let $f$ be an isothermic torus from a fundamental piece with axis $\axisDir$ and $v$-period $k \V$. The following are equivalent.
	\begin{enumerate}
		\item The Bonnet pair surfaces $f^\pm$ parametrized by \eqref{eq:fPlusMinusBasicStructure} are tori.
		\item The axial component of $\tilde B(v)$ vanishes over a period of $\w(v)$, i.e.,
		\begin{align}
			\label{eq:axialTildeBCondition}
			\left \langle \axisDir,\int_0^{\V}  \tilde B'(v) dv \right \rangle_{\R^3} = 0.
		\end{align}
		\item The Gauss map $n$, metric $e^{2h} = |f_u|^2 = |f_v|^2$, and axis $\axisDir$ of $f$ satisfy the following along the spherical curve $u = \omega$ over a period of $\w(v)$.
		\begin{align}
		\label{eq:axialWeightedGaussMapCondition}
		\left\langle \axisDir,	\int_0^{\V} e^{-h(\omega,\w(v))} n(\omega,v) dv \right \rangle_{\R^3}  = 0.
		\end{align}
	\end{enumerate}
\end{lemma}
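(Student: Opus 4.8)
The plan is to prove the chain of equivalences by first reducing the statement ``$f^\pm$ are tori'' to a periodicity condition involving only $\tilde B(v)$, and then unwinding the definition of $\tilde B'(v)$ from \eqref{eq:tildeBPrimeBasicStructure} to get the Gauss-map formulation. By Proposition~\ref{lem:pairExplicitFormulaWithBCondition} the Bonnet pair surfaces are already known to be cylinders, so they are tori exactly when they are periodic over the $v$-period $k\V$ of $f$. The $\Apart{}$ part of \eqref{eq:fPlusMinusBasicStructure} is expressed through $(f^{-1})^* + \epsilon^2 f^*$, and by Corollary~\ref{cor:A-condition} (together with the fact that $f^*$ and $(f^{-1})^*$ are tori, established in Section~\ref{sec:isothermic-planar-u}) this part is automatically closed over any cycle. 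So the only obstruction is the $\Bpart{}$ part $2\epsilon(\Phi^{-1}\hat B \qi \Phi + \tilde B)$. Here I would use that $\hat B(u,\w(v))$ is $2\pi$-periodic in $u$ and depends on $v$ only through the periodic function $\w(v)$, hence $\hat B(u,\w(v+k\V)) = \hat B(u,\w(v))$; together with the rotational structure \eqref{eq:phiMonodromyForAllV} of $\Phi$, the term $\Phi^{-1}\hat B \qi \Phi$ transforms under $v \mapsto v + k\V$ by conjugation with the $k$-fold generating rotation, which fixes the axis $\axisDir$ but rotates about it. So the $\Bpart{}$ contribution returns to itself after a full $v$-period if and only if $\tilde B(v)$ does, and — after conjugating everything into a fixed frame where the generating rotation is rotation about $\axisDir$ — this happens precisely when the component of $\int_0^{\V}\tilde B'(v)\,dv$ along $\axisDir$ vanishes, since the rotation-about-$\axisDir$ symmetry already kills the components orthogonal to $\axisDir$. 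This establishes the equivalence of (1) and (2).

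For (2) $\Leftrightarrow$ (3), I would start from the ODE \eqref{eq:tildeBPrimeBasicStructure} for $\tilde B'(v)$ and evaluate it at the convenient point $u_0 = \omega$, the location of the spherical $v$-curve. The key simplifications come from the global properties of the isothermic cylinder listed before Proposition~\ref{lem:pairExplicitFormulaWithBCondition}: along $u = \omega$, by \eqref{eq:fomegafuParallel} the vectors $f(\omega,v)$ and $f_u(\omega,v)$ are parallel, so $(f^*)_v(\omega,v) = -f_v/|f_v|^2$ and $f(\omega,v)$ interact cleanly, and $\Imq((f^*)_v(\omega,v) f(\omega,v))$ collapses to something expressible via the normal $n(\omega,v)$ and the conformal factor $e^{h(\omega,\w(v))}$. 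I expect the frame equations \eqref{eq:isothermicFrameEquationsDU}–\eqref{eq:isothermicFrameEquationsDV} in isothermic coordinates, together with $f_v^2 = -e^{2h}$ and the fact that at $u = \omega$ the surface point is radial, to reduce $\Imq((f^*)_v f)$ at $u = \omega$ to a multiple of $e^{-h}n$ plus a total $v$-derivative. The second, $\hat B$-dependent term in \eqref{eq:tildeBPrimeBasicStructure} is likewise a total $v$-derivative (it is literally $\big(\Phi^{-1}\hat B(\omega,\w(v))\qi\Phi\big)_v$), so upon integrating over $[0,\V]$ and pairing with $\axisDir$ these derivative terms contribute boundary values that — using $\hat B(\omega,\w(0)) = \hat B(\omega,\w(\V))$ and the fact that conjugation by the generating rotation fixes $\axisDir$ — cancel. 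What survives the pairing with $\axisDir$ is precisely the weighted Gauss-map integral $\int_0^{\V} e^{-h(\omega,\w(v))} n(\omega,v)\,dv$, up to a nonzero constant, giving \eqref{eq:axialWeightedGaussMapCondition}.

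The main obstacle is the bookkeeping in the (2) $\Leftrightarrow$ (3) step: correctly identifying which pieces of $\tilde B'(v)$ are exact $v$-derivatives (and hence integrate to boundary terms that drop out after pairing with the rotation-invariant axis $\axisDir$), and verifying that the non-exact remainder is exactly the $e^{-h}n$ integrand and not $e^{-h}n$ plus some extra axial term. This requires carefully expanding $\Imq((f^*)_v(\omega,v)f(\omega,v))$ using \eqref{eq:fBasicStructure}, \eqref{eq:fomegafuParallel}, and the $v$-frame equations, and tracking the $\Phi$-conjugation; in particular one must check the term $\big(\Phi^{-1}\hat B(u_0,\w(v))\qi\Phi\big)_v$ produces the right $\Phi'\Phi^{-1} = \sqrt{1-\w'^2}\,W_1(\w)\qk$ contributions so that everything orthogonal to $\axisDir$ is annihilated on integration. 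The (1) $\Leftrightarrow$ (2) step is conceptually the cleanest, resting only on the rotational monodromy \eqref{eq:phiMonodromyForAllV} and the periodicity of $\hat B$ and $\w$; the subtlety there is simply to note that closing up requires the \emph{full} axial period $\sum_{j=0}^{k-1}$ of conjugates of $\int_0^{\V}\tilde B'$, but since each conjugation fixes $\axisDir$, the axial components add directly and vanishing over a single period $\V$ suffices.
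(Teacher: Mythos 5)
Your plan is correct and follows essentially the same route as the paper's proof: (1)$\Leftrightarrow$(2) rests on the monodromy/equivariance structure of $\tilde B'(v)$ so that the off-axis parts of the $k$ rotated copies of $\int_0^{\V}\tilde B'\,dv$ cancel while the axial parts add, and (2)$\Leftrightarrow$(3) comes from evaluating \eqref{eq:tildeBPrimeBasicStructure} at $u_0=\omega$, where \eqref{eq:fomegafuParallel} together with $(f^*)_v=-e^{-2h}f_v$ collapses $\Imq\big((f^*)_v f\big)$ to exactly $-R(\omega)e^{-h(\omega,\w(v))}n(\omega,v)$ with no extra term (the paper's \eqref{eq:bPartWeightedGaussMap}), which is precisely the identity you flag as the remaining check. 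The only cosmetic difference is that you dispose of the $\hat B$-boundary term by pairing with the rotation-invariant axis over a single period $[0,\V]$, whereas the paper integrates over the full period $k\V$ (where that term vanishes by periodicity) and then applies the same axial-decomposition argument to the weighted Gauss map; both are valid.
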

\begin{proof}
The Bonnet pair cylinders parametrized by \eqref{eq:fPlusMinusBasicStructure} are tori when the four terms on the right-hand side are $v$-periodic. When $f(u,v)$ is a torus, $\Phi(v)$ and $\w(v)$ are periodic, so one immediately verifies the first three terms are $v$-periodic. We compute the translational period of $\tilde B(v)$. 

By combining \eqref{eq:fPlusMinusBasicStructure} and \eqref{eq:fDualUShift} we see that $\tilde B'(v)$ can be written in the form $\Phi(v)^{-1} X(\w(v), \w'(v)) \Phi(v)$. With \eqref{eq:phiMonodromyForAllV} for $\Phi(v + k \V)$ we find
\begin{align*}
	\int_{0}^{k \V} \tilde B'(v) dv 
	& = \sum_{n=0}^{k-1} \left(\Phi(0)^{-1} \Phi(\V) \right)^{-n} \left( \int_0^{\V} \tilde B'(v) dv \right) \left(\Phi(0)^{-1} \Phi(\V) \right)^{n}.
\end{align*}
This sum decomposes into the parts parallel and orthogonal to the axis. The orthogonal part must vanish. To see this, recall that $\Phi(0)^{-1} \Phi(\V)$ induces a rotation about the axis $\axisDir$ by angle $\angleFP$ with $k \angleFP \in 2\pi \N$. Hence, the orthogonal part is the sum of the planar $k$ roots of unity, which is zero. The parallel components then sum together, giving
\begin{align}
	\int_{0}^{k \V} \tilde B'(v) dv = k \left \langle \axisDir,\int_0^{\V}  \tilde B'(v) dv \right \rangle_{\R^3} \axisDir.
\end{align}
Thus, the condition that $\tilde B(v)$ is periodic with period $k \V$, so that the Bonnet surfaces are tori, is equivalent to \eqref{eq:axialTildeBCondition}.

It remains to prove the equivalent condition on the weighted integral of the Gauss map $n = e^{-2h} f_u f_v$. Since $\Phi$ and $\hat B$ are both $v$-periodic, \eqref{eq:fPlusMinusBasicStructure} implies that
\begin{align*}
\int_0^{k \V} \tilde B'(v) dv = \int_0^{k\V} \Imq \left( (f^*)_v(\omega,v) f(\omega,v) \right) dv.
\end{align*}
Now use $(f^*)_v = -\frac{f_v}{|f_v|^2} = -f_ve^{-2h}$ and $f(\omega,v)$ from \eqref{eq:fomegafuParallel} to compute
\begin{align}
\label{eq:bPartWeightedGaussMap}
(f^*)_v(\omega,v) f(\omega,v) 
= - R(\omega) e^{-h(\omega,\w(v))} n(\omega,v).
\end{align}
Thus,
\begin{align*}
\int_0^{k \V} \tilde B'(v) dv = - R(\omega) \int_0^{k\V} e^{-h(\omega,\w(v))} n(\omega,v) dv.
\end{align*}
The Gauss map $n$ along the $u = \omega$ curve has a similar structure as $\tilde B'(v)$, namely $\Phi(v)^{-1} Y(\w(v), \w'(v)) \Phi(v)$. The same reasoning applies to show that
\begin{align*}
	\int_{0}^{k \V} e^{-h(\omega,\w(v))} n(\omega,v) dv = k \left \langle \axisDir,\int_0^{\V}  e^{-h(\omega,\w(v))} n(\omega,v) dv \right \rangle_{\R^3} \axisDir,
\end{align*}
so condition \eqref{eq:axialWeightedGaussMapCondition} is equivalent to \eqref{eq:axialTildeBCondition}.
\end{proof}

\subsection{Simultaneous periodicity conditions of an isothermic cylinder from a fundamental piece and its corresponding Bonnet pair cylinders.}
We summarize Lemma~\ref{lem:rationalClosingIsothermicCylinderFromFundamentalPiece} and Lemma~\ref{lem:reductionToBPartAlongAVCurve} into the following theorem. These are the conditions on $f$ so that it is a torus and gives rise to tori $f^\pm$.

\begin{theorem}
	\label{thm:isothermicCylinderBonnetPeriodicityConditions}
	Let $f(u,v)$ be an isothermic cylinder with one generic family ($u$-curves) of closed planar curvature lines, and with periodic $\w(v + \V) = \w(v)$ that yields a fundamental piece with axis $\axisDir$ and generating rotation angle $\angleFP \in [0, \pi]$. Denote its Gauss map by $n$ and metric by $e^{2h}$.
	
	Then the resulting Bonnet pair cylinders $f^\pm$ are tori if and only if
	\begin{enumerate}
		\item (Rationality condition) 
		\begin{align}
		\label{eq:rationalityCondition}
			k \angleFP \in 2\pi \N \text{ for some } k \in \N \quad \text{and}
		\end{align}
		\item (Vanishing axial \Bpart{} part)
		\begin{align}
		\label{eq:axialWeightedGaussMapConditionAgain}
		\left\langle \axisDir,	\int_0^{\V} e^{-h(\omega,\w(v))} n(\omega,v) dv \right \rangle_{\R^3}  = 0.
		\end{align}
	\end{enumerate}
\end{theorem}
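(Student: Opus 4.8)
The plan is to obtain the theorem by assembling the two preceding lemmas, which between them already encode the two conditions. By Proposition~\ref{lem:pairExplicitFormulaWithBCondition} the Bonnet pair surfaces $f^{\pm}$ are cylinders closed in $u$ with period $2\pi$, so ``$f^{\pm}$ are tori'' is equivalent to ``$f^{+}$ and $f^{-}$ are each $v$-periodic.'' Writing $f^{\pm}=\Apart{}\pm\Bpart{}$ as in \eqref{eq:fPlusMinusBasicStructure} and passing to the sum and difference, this is in turn equivalent to ``the $\Apart{}$ part and the $\Bpart{}$ part are each $v$-periodic.'' So the proof splits into identifying when each of these two pieces is periodic.

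First I would show that $v$-periodicity of $\Apart{}(u,v)=R(\omega)^2 f(\pi-2\omega+u,v)-\epsilon^2 f(\pi-u,v)$ is equivalent to $f$ being a torus, and hence — via the lemma stated just before Lemma~\ref{lem:reductionToBPartAlongAVCurve} (an isothermic cylinder from a fundamental piece closes up exactly when $k\angleFP\in 2\pi\N$ for some $k$) — equivalent to the rationality condition \eqref{eq:rationalityCondition}. One direction is clear, since $\Apart{}$ is built from $u$-reflected copies of $f$. For the converse I would set $D(u,v)=f(u,v+P)-f(u,v)$ for a putative $v$-period $P$ of $\Apart{}$, read off $R(\omega)^2 D(\pi-2\omega+u,v)=\epsilon^2 D(\pi-u,v)$ from periodicity of $\Apart{}$, and compose this reflection with itself to obtain $(R(\omega)^4-\epsilon^4)D\equiv 0$, so $D\equiv 0$ provided $\epsilon^2\neq R(\omega)^2$.

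Once the rationality condition holds, $f$ is an isothermic torus from a fundamental piece, so Lemma~\ref{lem:reductionToBPartAlongAVCurve} applies verbatim: with $f$ a torus the $\Apart{}$ part is automatically $v$-periodic, and $f^{\pm}$ are tori if and only if the axial weighted Gauss map integral \eqref{eq:axialWeightedGaussMapConditionAgain} vanishes. Combining this with the previous paragraph yields both implications of the theorem. The main obstacle I anticipate is precisely the converse in the second paragraph — ruling out that the $\Apart{}$ and $\Bpart{}$ parts conspire to make $f^{\pm}$ close up while $f$ itself does not. The double-reflection identity dispatches generic $\epsilon$; the exceptional value $\epsilon=\pm R(\omega)$ needs a separate argument, where one observes that a non-closed $f$ has frame $\Phi$ with nontrivial monodromy $\Phi(0)^{-1}\Phi(\V)\neq 1$, which by the root-of-unity computation in the proof of Lemma~\ref{lem:reductionToBPartAlongAVCurve} forces the $\Phi$-conjugated terms of $\Bpart{}$ to remain non-periodic, again contradicting that $f^{\pm}$ are tori.
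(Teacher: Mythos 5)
Your proposal follows essentially the same route as the paper: the paper's entire proof of this theorem is the one-line assembly of the torus-closing lemma (an isothermic cylinder from a fundamental piece is a torus iff $k\angleFP\in2\pi\N$ for some $k\in\N$) with Lemma~\ref{lem:reductionToBPartAlongAVCurve}, which are exactly the two ingredients you invoke, and your reduction of ``$f^\pm$ tori'' to separate $v$-periodicity of the $\Apart{}$ and $\Bpart{}$ parts via sum and difference matches the structure of Proposition~\ref{lem:pairExplicitFormulaWithBCondition}. Where you go beyond the paper is the necessity step ``$f^\pm$ tori $\Rightarrow$ $f$ torus'': the paper simply asserts this before the theorem as an immediate consequence of the construction, whereas you derive it from the explicit decomposition \eqref{eq:fPlusMinusBasicStructure} by the double-reflection identity giving $(R(\omega)^4-\epsilon^4)\,D\equiv 0$, which is correct and a genuine addition. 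The only soft spot is your exceptional case $\epsilon=\pm R(\omega)$, which you merely sketch; it does close along the lines you indicate: if the monodromy angle were an irrational multiple of $2\pi$, then differentiating the assumed periodicity of the $\Bpart{}$ part in $u$ (killing $\tilde B$) would force the plane normals $\Phi^{-1}(v)\qi\Phi(v)$ to be fixed by a nontrivial rotation about $\axisDir$, hence parallel to $\axisDir$ for all $v$, contradicting the genericity (cone) assumption on the family of planar curvature lines. So your argument is sound and, at that one point, more explicit than the paper's.
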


Theorem~\ref{thm:isothermicCylinderBonnetPeriodicityConditions} reveals a path to prove the existence of compact Bonnet pairs, see Figure~\ref{fig:periodicity-conditions-outline}. Use the functional freedom of the periodic reparametrization function $\w(v)$ to simultaneously satisfy the rationality \eqref{eq:rationalityCondition} and one dimensional integral \eqref{eq:axialWeightedGaussMapConditionAgain} conditions.

The main analytical challenge is that we cannot explicitly compute the frame $\Phi(v)$ that rotates the planar curvature lines. The above conditions depend on this frame as follows: the monodromy $\Phi(0)^{-1}\Phi(\V)$ determines the angle $\angleFP$ for the rationality condition \eqref{eq:rationalityCondition} and the axis $\axisDir$; the Gauss map curve $n(\omega,v)$ in \eqref{eq:axialWeightedGaussMapConditionAgain}, however, depends directly on $\Phi(v)$.

To overcome this challenge, we restrict to the very special geometric setting where the $v$-curves of the isothermic surface $f$ lie on spheres. It is a classical theorem that if a surface has one family of planar and one family of spherical curvature lines then the centers of the spheres lie on a common line. By the rotational symmetry from the fundamental piece, this line must be the axis. Thus, in this case, we can compute the axis from local data at every $v$, which allows us to express both of the above conditions with analytically tractable formulas.

We prove the existence of compact Bonnet pairs by explicitly constructing them from isothermic tori with planar and spherical curvature lines. More general examples are then found by perturbing the isothermic torus so it no longer has spherical curvature lines, but retains its one family of planar curvature lines. The remainder of the article is dedicated to this construction.

\begin{figure}[tbh!]
	\centering
	\includegraphics[width=\linewidth]{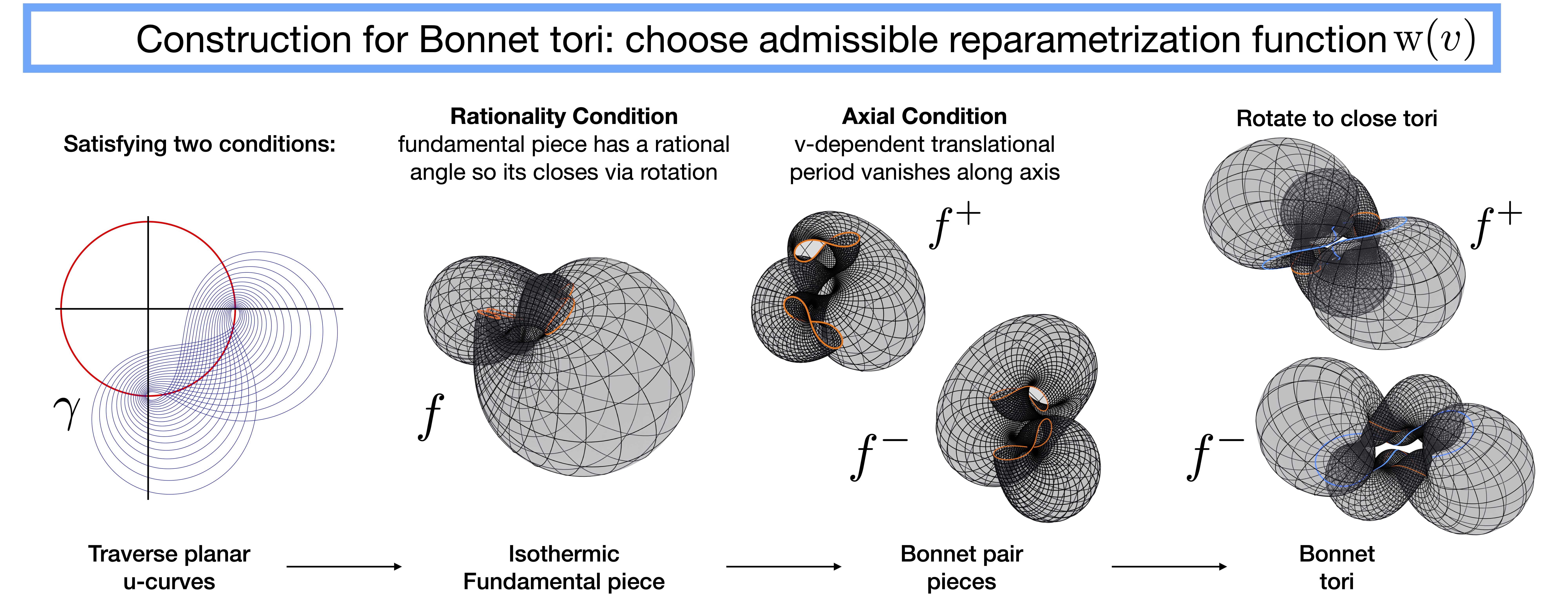}
	\caption{Overview of the construction for real analytic Bonnet pair tori as suggested by Theorem~\ref{thm:isothermicCylinderBonnetPeriodicityConditions}.}
	\label{fig:periodicity-conditions-outline}
\end{figure}


\section{Bonnet periodicity conditions when $f$ has one generic family of closed planar curvature lines and one family of spherical curvature lines}
\label{sec:periodicty-conditions-planar-u-spherical-v}
Our goal is to construct compact Bonnet pairs out of an isothermic cylinder $f(u,v) = \Phi^{-1}(v) \gamma(u,\w(v))\qj \Phi(v)$ with one generic family ($u$-curves) of closed planar curvature lines from a fundamental piece with axis $\axisDir$ and generating roation angle $\theta$. Recall Definition~\ref{def:isothermicCylinderFromAFundamentalPiece} and the discussion surrounding  Theorem~\ref{thm:isothermicCylinderBonnetPeriodicityConditions}. Note that one of the periodicity conditions is computed along the $u=\omega$ curve $f(\omega,v)$, which always lies on a sphere of radius $|R(\omega)|$ centered at the origin. The challenge is computing the axis $\axisDir$.

Fortunately, additionally requiring the isothermic cylinder to have all of its $v$-curvature lines spherical makes the problem analytically tractable. The key geometric insight is that the centers of the curvature line spheres are collinear and lie on the axis.

The analysis, however, is still quite involved. In particular, both periodicity conditions \eqref{eq:rationalityCondition}, \eqref{eq:axialWeightedGaussMapConditionAgain} are written as elliptic integrals on the elliptic curve that governs the spherical $v$-curves. This elliptic curve is stated in Corollary~\ref{cor:cylinderEllipticCurves}.

\subsection{Two elliptic curves and a local formula for the axis}
We state the necessary results from our paper on isothermic surfaces with one family of planar curvature lines~\cite{short-isothermic-planar}.

\begin{corollary}[\protect{\cite[Corollary 6]{short-isothermic-planar}}]
	\label{cor:cylinderEllipticCurves}
	Let $f(u,v)$ be an isothermic cylinder with one generic family ($u$-curves) of closed planar curvature lines, as in Theorem~\ref{thm:planarIsothermicCylinderFormulas}, with rhombic lattice spanned by $\pi, \tau \pi$, critical parameter $\omega$ satisfying $\vartheta_2'(\omega)=0$, and $\tau$-admissible reparametrization function $\w(v)$. Then,
	\begin{enumerate}
		\item The function $s(\w) = e^{-h(\omega, \w)}$ satisfies
		\begin{align}
			s'(\w)^2 &= Q_3(s), \label{eq:uEllipticCurve} \text{ where }\\
	Q_3(s) &= \frac{\vartheta _1^{\prime }(0)^2}{\vartheta_2(\omega)^2} \left(s-\frac{\vartheta _1(\omega)^2}{\vartheta _2(0)^2}\right) \left(s-\frac{\vartheta_3(\omega)^2}{\vartheta _4(0)^2}\right) \left(s-\frac{\vartheta _4(\omega)^2}{\vartheta _3(0)^2}\right) \\
	&= 2U_1'(\omega) s^3 -U_2(\omega) s^2 - 2 U'(\omega) s - U(\omega)^2,
	\label{eq:criticalQ3factorized}
	\end{align}
		and
		\begin{align}
			\label{eq:bigUAtOmegaAndRadius}
			\quad U(\omega) &= -\frac12  \frac{\vartheta_1'(0)}{\vartheta_2(\omega)^2} \vartheta_1(2\omega) = -R(\omega)^{-1} , \quad U_1(\omega) = 0,\\
			U'(\omega) &= -\frac12 \frac{\vartheta_1'(0)}{\vartheta_2(\omega)^2} \vartheta_1'(2\omega) , \quad			U_1'(\omega) = \frac12 \frac{\vartheta_1'(0)^2}{\vartheta_2(\omega)^2}, \\
			U_2(\omega) &= \frac{\vartheta _1^{\prime }(0)^2}{\vartheta _2(\omega)^2} \left(\frac{\vartheta _1(\omega)^2}{\vartheta _2(0)^2}+\frac{\vartheta _4(\omega)^2}{\vartheta_3(0)^2}+\frac{\vartheta _3(\omega)^2}{\vartheta _4(0)^2}\right).
		\end{align}
		Moreover, the elliptic curve \eqref{eq:uEllipticCurve} has rhombic lattice $\pi, \tau \pi$.
		\item The $v$-curvature lines are spherical if and only if the function $\w$ is given by $\w'(v) = \w'(s) s'(v)$ where
		\begin{align}
			\label{eq:sphericalEllipticCurve}
			\w'(s) &= 
			\frac{1}{\sqrt{Q_3(s)}} \quad \text{ and } \quad v'(s) = 
			\frac{\delta}{\sqrt{Q(s)}}, \text{ where }\\
			\label{eq:sphericalEllipticCurveFormula}
			Q(s) &= -(s-s_1)^2(s-s_2)^2 + \delta^{2}Q_3(s),
		\end{align}
		for some $0 \neq \delta \in \R$ and either $s_1, s_2 \in \R$ or $s_2 = \overline{s_1} \in \C$. In this case, $\sqrt{1-\w'(\cdot)^2}$ as a signed real-valued function is
		\begin{align}
			\label{eq:signedSquareRoot}
			\sqrt{1-\w'(v)^2} &= \frac{\delta^{-1} \left(s(\w(v))-s_1\right)\left(s(\w(v))-s_2\right)}{s'(\w(v))},  \text{ where } \\
			s(\w) &= e^{-h(\omega, \w)} = \frac{\vartheta _2(\omega)^2}{\vartheta _2(0)^2} \left(\frac{\vartheta _1(\omega)^2}{\vartheta _2(\omega)^2}-\frac{\vartheta _1\left(\frac{\ci \w}{2}\right)^2}{\vartheta _2\left(\frac{\ci \w}{2}\right)^2}\right).
		\end{align}
		\end{enumerate}
\end{corollary}
\begin{remark}
	The polynomial \eqref{eq:sphericalEllipticCurveFormula} defines a second elliptic curve $y^2 = Q(s)$. Throughout the following, we refer to this elliptic curve and the elliptic curve \eqref{eq:uEllipticCurve} as $Q$ and $Q_3$, respectively.
\end{remark}

Note that $s = e^{-h(\omega, \w)}$ already appears as the Gauss map weight in the vanishing axial periodicity condition \eqref{eq:axialWeightedGaussMapConditionAgain}. When the $v$-curvature lines are spherical, the axis $\axisDir$ from that periodicity condition are also understood in terms of local data depending on $s$, the parameters $\delta, s_1, s_2$ and constants from Corollary~\ref{cor:cylinderEllipticCurves}.

\begin{proposition}[\protect{\cite[Proposition 11]{short-isothermic-planar}}]
	\label{prop:bigZPrimeAtOmegaMovingFrameCoefficients}
	Let $f(u,v)$ be an isothermic cylinder from a fundamental piece with axis $\axisDir$ and whose second family of curvature lines are spherical. Then, the centers $Z(u)$ of the spherical curvature line spheres are collinear and
\begin{align}
	\label{eq:axisDir}
	\pm \axisDir = \frac{Z'(\omega)}{|Z'(\omega)|} &= 	z_1(s(v))\left(e^{-h(\omega,v)}f_u(\omega,v)\right) + z_2(s(v))\left(e^{-h(\omega,v)}f_v(\omega,v)\right) + z_3(s(v)) n(\omega, v),
\end{align}
where
\begin{align}
	z_1(s) &= \frac{s^{-1}}{|Z'(\omega)|} \left(1 + s R(\omega)^2 \left( U'(\omega) + s_1 s_2 U_1'(\omega)\right) \right),\\
	z_2(s) &= \frac{s^{-1}R(\omega)\delta^{-1}}{|Z'(\omega)|} \sqrt{Q(s)},\\
	z_3(s) &= \frac{s^{-1}R(\omega)\delta^{-1}}{|Z'(\omega)|} \left(-\delta^2 U_1'(\omega) s + (s-s_1)(s-s_2) \right),
\end{align}
and
\begin{align}
	\label{eq:bigZPrimeNormSquared}
	\begin{aligned}
		|Z'(\omega)|^2 =  R(\omega)^2 &\left( 2(s_1 + s_2)U_1'(\omega) + \delta^2 U_1'(\omega)^2 - U_2(\omega)\right) + \\ & + R(\omega)^4 \left( U'(\omega) + s_1 s_2 U_1'(\omega)\right)^2.
	\end{aligned}
\end{align}
\end{proposition}
Note the axis is constant, so the right hand side of \eqref{eq:axisDir} is independent of $v$. 
\subsection{Moduli space of isothermic cylinders from a fundamental piece of rhombic type}
To construct explicit examples of isothermic tori that lead to compact Bonnet tori, we work in the following finite dimensional space of isothermic cylinders with planar and spherical curvature lines.

\begin{definition}
	\label{def:isothermicCylinderFromFundamentalPieceOfRhombicType}
	Let $f(u,v)$ be an isothermic cylinder with one generic family of closed planar curvature lines whose second family of curvature lines are spherical as in Corollary~\ref{cor:cylinderEllipticCurves}. The closed planar $u$-curves are governed by a real elliptic curve $Q_3$ of rhombic type. We call $f(u,v)$ an \emph{isothermic cylinder from a fundamental piece of rhombic type} if :
	\begin{enumerate}
		\item the second real elliptic curve $Q$ governing the spherical $v$-curvature lines is also of rhombic type, and arises from choosing real parameters $\delta \neq 0, s_1, s_2 \in \R$;
		\item the real oval of $Q_3$ strictly contains the real oval of $Q$; and
		\item in \eqref{eq:sphericalEllipticCurve} we choose positive signs for both square roots.
	\end{enumerate}
\end{definition}
\begin{remark} \ 
	\begin{itemize}
		\item We denote the two real zeroes of $Q(s)$ by $s_1^-,s_1^+$, so its real oval is $[s_1^-, s_1^+]$. As suggested by the notation, we will often assume that $s_1^- < s_1 < s_1^+$.
		\item The real oval of $Q_3$ is $[s_0,\infty)$, where $s_0 = \frac{\vartheta_1(\omega ; Q_3)^2}{\vartheta_2(0; Q_3)^2} > 0.$ Note that the leading coefficient is $2U'_1(\omega) = \frac{\vartheta_1'(0 ; Q_3)^2}{\vartheta_2(\omega; Q_3)^2} > 0$.
		\item The next lemma shows $\w(v)$ is periodic. So, this definition is a special case of, and thus compatible with, Definition~\ref{def:isothermicCylinderFromAFundamentalPiece} for an \emph{isothermic cylinder from a fundamental piece}.
		\item The terminology \emph{of rhombic type} refers to the curve $Q$, since $Q_3$ must be of rhombic type in order to have closed $u$-curves. For simplicity we assume $s_1, s_2 \in \R$.
	\end{itemize}
\end{remark}

\begin{lemma}
	\label{lem:globalSphericalWOfv}
	An isothermic cylinder from a fundamental piece of rhombic type has immersion formula $f(u,v)$ given as in Theorem~\ref{thm:planarIsothermicCylinderFormulas} with periodic $\tau$-admissible reparametrization function $\w$ globally defined by:
	\begin{align}
		\label{eq:wOfVDoublyRhombic}
		\w(v) &= \wp^{-1} \left(\a_0 + \frac{\a_1 \, \wp(v;\delta^{-2}Q) + \a_2}{\a_3 \, \wp(v; \delta^{-2}Q) + \a_4} ; Q_3\right), \text{ where }\\
		\a_0 &= \frac{1}{24}Q_3''(s_0), \quad
		\a_1 = \frac{1}{4}Q_3'(s_0), \quad
		\a_2 = - \frac{1}{96}Q_3'(s_0)\delta^{-2}Q''(s_1^-), \\
		\a_3 &= (s_1^- - s_0), \quad
		\a_4 = \frac{1}{4}\delta^{-2}Q'(s_1^-) - (s_1^- - s_0)\frac{1}{24}\delta^{-2}Q''(s_1^{-}).
	\end{align}
	The period $\mathcal{V}$ of $\w(v)$ is the real period of $\wp(v;\delta^{-2}Q)$. 
	We use a nonstandard notation for the Weierstrass $\wp$ function as explained in Remark~\ref{rem:wpNotation}.
\end{lemma}
\begin{proof}
The real oval $[s_0,\infty)$ of $Q_3$ strictly contains the real oval $[s_1^-, s_1^+]$ of $Q$. Both elliptic integrals
\begin{equation}
\label{eq:elliptic_w,v}
\w(s)=\int_{s_0}^s \frac{dt}{Q_3(t)}, \quad v(s)=\int_{s_1^-}^s \frac{\delta dt}{Q(t)}
\end{equation}
are monotonic on the interval $[s_1^-, s_1^+]$, and thus define there a real analytic periodic function $\w(v)$. The period of this function 
$\V= 2 \int_{s_1^-}^{s_1^+} \frac{\delta dt}{Q(t)}$ is given by integrating around the real oval $[s_1^-, s_1^+]$ for $Q$. We need to verify that $\w(v)$ is $\tau$-admissible to ensure the immersed surface $f$ is real analytic.

The period lattice of $Q_3$ is of rhombic type and is spanned by the parallelogram $\pi, \tau \pi$. The elliptic integral $s\mapsto \ci\w$ maps the elliptic curve $Q_3$ to the fundamental parallelogram spanned by $\pi, \tau \pi$, and the real oval of $Q_3$ is mapped to the imaginary period $2\ci\pi \Imc \tau$ of the lattice of $Q_3$. Thus we obtain $0 < \w(v) < 2\pi \Imc \tau$. Further, we verify that $\sqrt{1-\w'(\cdot)^2}$ is also real analytic across its zero set. Using the right hand side of \eqref{eq:signedSquareRoot} we see that $v \mapsto \sqrt{1-\w'(v)^2}$ is real analytic with sign changes at $v_0 \in [0, \V/2]$ and $\V - v_0 \in [\V/2, \V]$, where $s(\w(v_0)) = s_1 \in [s_1^-, s_1^+]$.

Finally, the variable $s$ equates the inverse elliptic integrals of (\ref{eq:elliptic_w,v}). By the formula for the inverse of such integrals 
\cite[Sec. 20.6]{whittaker_watson_1996}, we have
		\begin{align}
		s_0 + \frac{\frac{1}{4}Q_3'(s_0)}{\wp(\w;Q_3)-\frac{1}{24}Q_3''(s_0)} = s_1^- + \frac{\frac{1}{4}\delta^{-2}Q'(s_1^-)}{\wp(v;\delta^{-2}Q)-\frac{1}{24}\delta^{-2}Q''(s_1^-)}.
	\end{align}
Now we can invert $\wp(\w;Q_3)$ for $s\in [s_1^-, s_1^+]$ to find the globally defined real analytic function $\w$ given by \eqref{eq:wOfVDoublyRhombic}.

\end{proof}
\begin{remark}
	\label{rem:wpNotation}
	We use a nonstandard notation for the Weierstrass $\wp$ function, motivated by the following result in \cite[Sec. 20.6]{whittaker_watson_1996}. For 
	\begin{align}
		\label{eq:generalQuartic}
		\mathcal{Q}(x) = c_4 + 4 c_3 x + 6 c_2 x^2 + 4 c_1 x^3 + c_0 x^4
	\end{align}
	the inverse of the elliptic integral $z = \int_{x_0}^x \mathcal{Q}(t)^{-\frac12}\,dt$ with $\mathcal{Q}(x_0) = 0$ is a rational function of $\wp(z; g_2(\mathcal{Q}), g_3(\mathcal{Q}))$ where the invariants are
	\begin{align}
		\label{eq:g2}
		g_2( \mathcal{Q}) &= c_0 c_4 - 4 c_1 c_3 + 3 c_2^2, \\
		\label{eq:g3}
		g_3( \mathcal{Q}) &= c_0 c_2 c_4 + 2 c_1c_2c_3 - c_2^3 - c_0c_3^2-c_1^2c_4.
	\end{align}
	We use the notation $\wp(z;  \mathcal{Q})$ to emphasize the dependence on $\mathcal{Q}$.
\end{remark}
\begin{remark}
	We assume the period $\V$ of $\w(v)$ does not close $f$ into a torus, i.e., $f(u,v + \V) \neq f(u, v)$. 
\end{remark}

\begin{lemma}
	\label{lem:moduliSpaceIsothermicCylindersFundamentalPieceRhombicType}
	The moduli space of isothermic cylinders from a fundamental piece of rhombic type has real dimension four. The parameters are $\Imc \tau \in \R$ with $0 < \Imc \tau < \Imc \tau_0$  giving critical $\omega \in (0, \pi/4)$ and $\delta \neq 0, s_1, s_2 \in \R$.
\end{lemma}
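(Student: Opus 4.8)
The plan is to read off the dimension count directly from the five-dimensional classification of Theorem~\ref{thm:planarSphericalIsothermicClassification}, and then verify that the extra conditions defining an isothermic cylinder from a fundamental piece of rhombic type (Definition~\ref{def:isothermicCylinderFromFundamentalPieceOfRhombicType}) remove exactly one real dimension and are jointly realizable.

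First I would recall, in the notation of Proposition~\ref{prop:sphericalEllipticCurve}, that an isothermic surface with one generic family of planar and one family of spherical curvature lines is parametrized by the real elliptic curve $\mathcal{Q}_3$ (one real modulus, which in theta-function form is $\Imc\tau$) together with $\omega$, governing the planar $u$-curves, and by $\delta \neq 0, s_1, s_2$, which determine the quartic $\mathcal{Q}$ governing the spherical $v$-curves; by Theorem~\ref{thm:planarSphericalIsothermicClassification} this moduli space has real dimension five. Next I would impose condition (1): taking $\mathcal{Q}_3$ of rhombic type means $\tau = \frac12 + \ci\lambda$ and retains $\lambda = \Imc\tau$ as a single real parameter, while requiring $\omega$ critical is the one equation $\vartheta_2'(\omega \mid \tau) = 0$. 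By Lemma~\ref{lem:rhombic_zeros}(ii) this has a unique solution $\omega \in (0,\pi/4)$ precisely when $0 < \lambda < \lambda_0 = \Imc\tau_0$, and the zero is simple — the function $\psi$ in the proof of Lemma~\ref{lem:rhombic_zeros} meets the line $4z/\pi$ transversally for $\lambda < \lambda_0$ — so $\omega = \omega(\lambda)$ depends smoothly on $\lambda$. Thus $\omega$ ceases to be a free parameter and the dimension drops to four, with coordinates $(\lambda, \delta, s_1, s_2)$.

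I would then check that conditions (2)--(4) do not lower the dimension. Restricting to real $\delta \neq 0, s_1, s_2$ already gives a three-dimensional family, while $\mathcal{Q}$ being of rhombic type (its quartic having two real and two non-real roots), the strict containment of the real oval of $\mathcal{Q}$ in that of $\mathcal{Q}_3$, and the sign normalization $\w'(s) = 1/\sqrt{\mathcal{Q}_3}$, $v'(s) = \delta/\sqrt{\mathcal{Q}}$ are, respectively, open conditions on $(\lambda, \delta, s_1, s_2)$ or a mere choice of square-root branch. For nonemptiness I would exhibit a point: fix $\lambda \in (0,\lambda_0)$ with its critical $\omega$; then $\mathcal{Q}_3$ is a cubic with positive leading coefficient whose only real root is $s_0 > 0$, so $\mathcal{Q}_3 > 0$ on $(s_0,\infty)$ and $\mathcal{Q}_3 < 0$ on $(-\infty, s_0)$. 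Choosing $s_1 > s_0$, $s_2 < s_0$, and $\delta$ small makes $\mathcal{Q}(s) = -(s-s_1)^2(s-s_2)^2 + \delta^2 \mathcal{Q}_3(s)$ positive at $s_1$ (where it equals $\delta^2 \mathcal{Q}_3(s_1)$) and negative near $s_2$, hence with a single small real oval $[s_1^-, s_1^+]$ around $s_1$ satisfying $s_0 < s_1^- < s_1 < s_1^+$; so $\mathcal{Q}$ is rhombic and its real oval lies strictly inside $[s_0,\infty)$. Therefore conditions (1)--(4) hold on a nonempty open subset of a four-manifold, and the moduli space has real dimension four with the stated parameters $\Imc\tau, \delta, s_1, s_2$.

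The step I expect to be the main obstacle is condition (1): one must argue that "$\omega$ critical" is a transverse constraint, so that it drops the dimension by exactly one and exhibits $\omega$ as a smooth function of $\Imc\tau$ alone. This is exactly what the uniqueness-and-concavity analysis in the proof of Lemma~\ref{lem:rhombic_zeros} supplies, so the main work is citing it correctly; conditions (2)--(4) need only the openness and nonemptiness observations above, and the signs and root locations of the cubic $\mathcal{Q}_3$ and the quartic $\mathcal{Q}$ are routine.
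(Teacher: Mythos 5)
Your proposal is correct and follows the same route the paper implicitly takes: the paper states this lemma without a separate proof, treating it as an immediate parameter count from the five-dimensional classification of Theorem~\ref{thm:planarSphericalIsothermicClassification} together with the fact (Theorem~\ref{thm:planarIsothermicCylinderFormulas}, Lemma~\ref{lem:rhombic_zeros}) that closedness of the $u$-curves removes $\omega$ as a free parameter, replacing it by the critical value determined by $\Imc\tau\in(0,\Imc\tau_0)$ and leaving $(\Imc\tau,\delta,s_1,s_2)$. Your supplementary checks --- transversality of $\vartheta_2'(\omega\,|\,\tau)=0$ via the concavity of $\psi$ in Lemma~\ref{lem:rhombic_zeros}, openness of conditions (2)--(4), and nonemptiness by choosing $s_1,s_2$ with $\mathcal{Q}_3(s_1)>0$, $\mathcal{Q}_3(s_2)<0$ and $\delta$ small --- are sound and consistent with the sign conditions the paper itself uses later in the $\delta\to 0$ analysis (Lemmas~\ref{lem:periodicity_spherical_implicit} and~\ref{lem:exist_s_12}).
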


\subsection{Computing the periodicty conditions from the axis}
\begin{lemma}
	\label{lem:axialBPartAbelianIntegralGenericA3}
	Let $f(u,v)$ be an isothermic cylinder from a fundamental piece of rhombic type, with axis written as
	\begin{equation}
	\label{eq:axis_in_basis}
		\axisDir = a_1(s) \left(e^{-h(\omega,v)}f_u(\omega,v)\right) + a_2(s) \left(e^{-h(\omega,v)}f_v(\omega,v)\right) + a_3(s) n(\omega, v),
	\end{equation}
	where $s = e^{-h(\omega,\w(v))}$ lives on the elliptic curve $Q$ of rhombic type with real oval $[s_1^-, s_1^+]$.
	Then
	\begin{align*}
		\left\langle \axisDir,	\int_0^{\V} e^{-h(\omega,\w(v))} n(\omega,v) dv \right \rangle_{\R^3} = 2 \delta \int_{s_1^-}^{s_1^+} s \frac{a_3(s) }{\sqrt{Q(s)}} ds
	\end{align*}
\end{lemma}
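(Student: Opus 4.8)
The plan is to evaluate the inner product directly, using that at the critical value $u=\omega$ the triple $\hat{f_u}(\omega,v),\hat{f_v}(\omega,v),n(\omega,v)$ is an orthonormal frame, and then to convert the resulting $v$-integral into an Abelian integral on the curve $\mathcal{Q}(s)$ via the substitution $s=e^{-h(\omega,\w(v))}$ supplied by Proposition~\ref{prop:sphericalEllipticCurve}.

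First I would record the elementary fact that for an isothermic immersion $\langle f_u,f_v\rangle=0$ and $|f_u|=|f_v|=e^{h}$, while $n\perp f_u,f_v$ and $|n|=1$; hence $\{\hat{f_u}(\omega,v),\hat{f_v}(\omega,v),n(\omega,v)\}$ is an orthonormal basis of $\R^3$ for every $v$. Consequently the coefficients appearing in \eqref{eq:axis_in_basis} are literally the frame components of the fixed vector $\axisDir$, so that $\langle \axisDir,n(\omega,v)\rangle_{\R^3}=a_3(s)$. Combining this with $e^{-h(\omega,\w(v))}=s$, which is the definition of $s$, the integrand becomes $s\,a_3(s)$, and therefore
\begin{align*}
\left\langle \axisDir,\int_0^{\V} e^{-h(\omega,\w(v))} n(\omega,v)\,dv \right\rangle_{\R^3}=\int_0^{\V} s(v)\,a_3\bigl(s(v)\bigr)\,dv .
\end{align*}

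Next I would change variables from $v$ to $s$. By Proposition~\ref{prop:sphericalEllipticCurve} together with item~(4) of Definition~\ref{def:isothermicCylinderFromFundamentalPieceOfRhombicType}, on each monotone branch $v$ and $s$ are related by $\tfrac{dv}{ds}=\delta\,\mathcal{Q}(s)^{-1/2}$, and $\V$, the period of $\w(v)$, equals the real period of $\wp(v;\delta^{-2}\mathcal{Q})$. Over one such period the function $s(v)$ rises monotonically from $s_1^-$ to $s_1^+$ and returns monotonically to $s_1^-$, so it double-covers the real oval $[s_1^-,s_1^+]$, the two passages corresponding to the two branches of $\sqrt{\mathcal{Q}(s)}$. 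Since the integrand $s\,a_3(s)$ depends on $s$ only, splitting the $v$-integral at the turning point where $s=s_1^+$ and reversing the orientation on the second half shows the two contributions are equal, whence
\begin{align*}
\int_0^{\V} s(v)\,a_3\bigl(s(v)\bigr)\,dv = 2\delta\int_{s_1^-}^{s_1^+}\frac{s\,a_3(s)}{\sqrt{\mathcal{Q}(s)}}\,ds ,
\end{align*}
which is the assertion.

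The step requiring care is the last one: one must confirm that a full period of $\w(v)$ genuinely corresponds to a double traversal of the oval $[s_1^-,s_1^+]$ — equivalently that $s(v)$ has exactly one maximum and one minimum per period, located at the branch points $s_1^\pm$ of $\mathcal{Q}$ — and that, after reversing orientation, the two branch contributions add (producing the factor $2\delta$) rather than cancel. This is the standard bookkeeping for Abelian integrals over the real cycle of a hyperelliptic curve, and it is exactly here that the hypotheses of Definition~\ref{def:isothermicCylinderFromFundamentalPieceOfRhombicType} enter: $\mathcal{Q}$ of rhombic type with the prescribed sign choice of $v'(s)$, and the real oval of $\mathcal{Q}_3$ strictly containing that of $\mathcal{Q}$ so that $\w'(s)$ never vanishes. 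A secondary point, which I would either cite from the surrounding discussion or justify via Theorem~\ref{thm:planarSphericalAxisConePoint} (collinearity of the sphere centers on the axis), is that the frame components of $\axisDir$ in \eqref{eq:axis_in_basis} really are single-valued functions of $s$ alone; granting this, the orthonormality identity and the definition of $s$ make the rest immediate.
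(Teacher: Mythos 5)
Your proposal is correct and follows essentially the same route as the paper: expand $\axisDir$ in the orthonormal frame at $u=\omega$ so the integrand becomes $s\,a_3(s)$, change variables using $v'(s)=\delta/\sqrt{\mathcal{Q}(s)}$, and obtain the factor $2$ from the fact that one period of $\w(v)$ traverses the real oval $[s_1^-,s_1^+]$ twice. Your extra care about the two branches adding rather than cancelling, and about the coefficients being functions of $s$ alone, only makes explicit what the paper takes for granted (the latter is already built into the hypothesis \eqref{eq:axis_in_basis}).
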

\begin{proof}
	Since $A$ is constant we pass it under the integral
	and use
	\begin{align}
		\label{eq:axialDirectionWeightedGaussMapComponent}
		\left\langle \axisDir, e^{-h(\omega,\w(v))} n(\omega,v) \right\rangle_{\R^3} = s a_3(s).
	\end{align}
	In integrated form, using $v'(s) = \frac{\delta}{\sqrt{Q(s)}}$, we have 
	\begin{align*}
	\int_0^{\V} e^{-h(\omega,\w(v))} \left\langle \axisDir, n(\omega,v) \right\rangle_{\R^3} dv = \int_{v=0}^{v=\V} \delta s \frac{a_3(s) }{\sqrt{Q(s)}} ds.
	\end{align*}
	We conclude by noting that the integral from $v = 0$ to $v = \V$ is the contour integral in $s$ around the real oval $[s_1^-, s_1^+]$. Each value of $s$ is traversed exactly twice, so the contour integral is twice the real integral from $s = s_1^-$ to $s = s_1^+$.
\end{proof}

To compute the angle $\angleFP$ for the rationality condition \eqref{eq:rationalityCondition}, we write the spherical curve $f(\omega,v)$ in terms of spherical coordinates that are aligned with the axis direction at the north pole. Equating the axial $\Bpart{}$ part found by this method with the one found by the previous method leads to an equation for $\angleFP$ as a second elliptic integral.

\begin{lemma}
	\label{lem:angleFPAbelianIntegralGenericA1A3}
	Let $f(u,v)$ be an isothermic cylinder from a fundamental piece of rhombic type, with generating rotation angle $\angleFP$ and axis \eqref{eq:axis_in_basis}. Then
	\begin{align*}
		\angleFP = -\frac{2 \delta}{R(\omega)} \int_{s_1^-}^{s_1^+} \frac{1}{s}\frac{1}{(1 - a_1(s)^2)} \frac{a_3(s)}{\sqrt{Q(s)}} ds.
	\end{align*}
\end{lemma}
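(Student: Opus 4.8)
The strategy is to compute the same quantity — the axial $\Bpart{}$-part, i.e., the axial component of the period $\int_0^{\V} e^{-h(\omega,\w(v))} n(\omega,v)\,dv$ — in a second way, now keeping track of how the curve $f(\omega,v)$ wraps around the axis, and then to equate the two expressions. Lemma~\ref{lem:axialBPartAbelianIntegralGenericA3} already gives one formula as an elliptic integral in $s$. So the plan is to set up spherical coordinates on the curvature-line sphere of radius $|R(\omega)|$ centered at the origin, oriented so that the axis $\axisDir$ points to the north pole, write $f(\omega,v)$ in terms of a polar angle $\Theta(v)$ (the colatitude) and an azimuthal angle $\Psi(v)$, and observe that $\Psi$ accumulates exactly $\angleFP$ over a period $\V$ of $\w(v)$. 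This is precisely the interpretation of $\angleFP$ as the generating rotation angle: the monodromy $\Phi(0)^{-1}\Phi(\V)$ is a rotation by $\angleFP$ about $\axisDir$, and under this rotation the spherical curve advances azimuthally by $\angleFP$.

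The key steps, in order: (1) Using \eqref{eq:fomegafuParallel} and \eqref{eq:nIsothermicPlanar}, express the position $f(\omega,v)$, the tangent $f_u(\omega,v)$, and the Gauss map $n(\omega,v)$ in the moving orthonormal frame $\{\hat f_u,\hat f_v, n\}$ along $u=\omega$, and resolve the constant axis $\axisDir$ in this frame as in \eqref{eq:axis_in_basis}; the coefficient $a_3(s)$ is $\langle \axisDir, n(\omega,v)\rangle$ and $a_1(s) = \langle \axisDir,\hat f_u(\omega,v)\rangle$. Since $\frac{1}{R(\omega)}f(\omega,v) = -\hat f_u(\omega,v)$, the colatitude of $f(\omega,v)$ relative to the north pole $\axisDir$ satisfies $\cos\Theta(v) = -a_1(s)$. (2) Compute the azimuthal speed $\Psi'(v)$: the component of $\frac{d}{dv}f(\omega,v)$ orthogonal to both $\axisDir$ and $f(\omega,v)$, divided by the ``horizontal radius'' $R(\omega)\sin\Theta = R(\omega)\sqrt{1-a_1(s)^2}$. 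Here one uses $f_v(\omega,v)$ from \eqref{eq:fvIsothermicPlanar} (specialized via \eqref{eq:sincosphi}) and the fact that $a_1,a_3$ depend on $v$ only through $s = e^{-h(\omega,\w(v))}$, together with the $s$-parametrization $v'(s) = \delta/\sqrt{\mathcal{Q}(s)}$ and $\w'(s) = 1/\sqrt{\mathcal{Q}_3(s)}$ from Proposition~\ref{prop:sphericalEllipticCurve}. The metric factor $e^{-h(\omega,\w(v))} = s$ appears, and $h_{\w}(\omega,s)^2 = s^{-2}\mathcal{Q}_3(s)$ lets one eliminate derivatives. After simplification $\Psi'(v)\,dv$ should become $-\frac{1}{R(\omega)}\cdot\frac{a_3(s)}{s(1-a_1(s)^2)}\cdot\frac{\delta}{\sqrt{\mathcal{Q}(s)}}\,ds$ (up to sign conventions). (3) Integrate over one period of $\w(v)$; since the real oval $[s_1^-,s_1^+]$ of $\mathcal{Q}$ is traversed twice, $\angleFP = \int_0^\V \Psi'(v)\,dv = 2\int_{s_1^-}^{s_1^+}\Psi'\,ds$, which gives the claimed formula.

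The main obstacle is step (2): correctly extracting the azimuthal angular velocity and showing that the various theta-function and Weierstrass expressions collapse to the stated rational-in-$(s,a_1,a_3)$ integrand times $\delta/\sqrt{\mathcal{Q}(s)}$. This requires carefully using that $\axisDir$ is \emph{constant} in $\R^3$ while its frame coordinates $a_i(s)$ vary — so $\frac{d}{dv}a_i$ is governed by the frame equations \eqref{eq:isothermicFrameEquationsDV} — and then recognizing that the apparent complications cancel because the centers of the curvature-line spheres lie on the axis (Theorem~\ref{thm:planarSphericalAxisConePoint}), which forces $f(\omega,v)$ to move on a latitude circle of the sphere only through its azimuth, with colatitude pinned by $a_1(s)$. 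A clean bookkeeping device is to note that $a_2(s) = \langle\axisDir,\hat f_v(\omega,v)\rangle$ must be expressible via $a_1,a_3$ since $|\axisDir|=1$, and that $\frac{d}{dv}\langle\axisDir, f(\omega,v)\rangle = \langle\axisDir, f_v(\omega,v)\rangle$ relates $a_2$ to the rate of change of the colatitude; this reduces everything to the single unknown $a_3(s)$ together with $a_1(s)$, matching the form of the answer. Once the integrand is in this form, comparison with Lemma~\ref{lem:axialBPartAbelianIntegralGenericA3} is immediate and no further computation is needed.
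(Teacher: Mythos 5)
Your proposal follows essentially the same route as the paper's proof: spherical coordinates aligned with the axis, the azimuth accumulating $\angleFP$ over one period of $\w(v)$, the azimuthal rate expressed through $a_1(s)$ and $a_3(s)$, and the change of variables $v'(s)=\delta/\sqrt{\mathcal{Q}(s)}$ with the factor $2$ from traversing the real oval twice. The step you flag as the main obstacle is in fact immediate in the paper: it equates the axial component of $(f^*)_v(\omega,v)f(\omega,v)$ computed in the spherical coordinates (giving $e^{-2h}r^2\tilde\theta'$ with $r^2=R(\omega)^2(1-a_1(s)^2)$) with the already-established identity \eqref{eq:bPartWeightedGaussMap} (giving $-R(\omega)e^{-h}n$), so that only $\langle \axisDir, e^{-h}n\rangle = s\,a_3(s)$ is needed and no theta-function or Weierstrass manipulations, nor the frame equations for $\tfrac{d}{dv}a_i$, enter at all.
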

\begin{proof}
	By Theorem~\ref{thm:planarIsothermicCylinderGeometry}, the $u=\omega$ curve $f(\omega,v)$ lies on a sphere centered at the origin and $f(\omega,v)= - R(\omega) e^{-h(\omega, \w(v))} f_u(\omega,v)$. Let $e_1, e_2$ span the plane perpendicular to the axis $\axisDir$, so that $\left\{e_1, e_2, \axisDir\right\}$ is a fixed orthonormal basis for $\R^3$. Spherical coordinates in this basis define functions $r(v),z(v), \tilde \theta(v)$ satisfying
	\begin{align*}
		f(\omega,v) &= r(v)\cos\tilde\theta(v) e_1 + r(v) \sin\tilde\theta(v) e_2 + z(v) \axisDir \text{ with } \\
		R(\omega)^2 &= r(v)^2 + z(v)^2.
	\end{align*}
	Moreover, by the expansion of $A$ in terms of the moving frame, we know that, since $z(v) = \langle A, f(\omega,v) \rangle = -R(\omega) a_1(s(v))$,
	\begin{align*}
		r(v)^2 = R(\omega)^2 (1 - a_1(s(v))^2).
	\end{align*}
	
	Now, the angle $\tilde \theta(v)$ measures the rotation about the axis, so at the end of the fundamental piece it must agree with the generating rotation angle, i.e., $\tilde \theta(\V) = \angleFP$. So we seek a differential equation for $\tilde \theta(v)$. This arises by considering the axial component of $\left( f^* \right)_v(\omega,v) f(\omega,v)$, which we saw in \eqref{eq:bPartWeightedGaussMap} satisfies
	\begin{align}
		\left( f^* \right)_v(\omega,v) f(\omega,v) = - R(\omega) e^{-h(\omega,\w(v))}n(\omega,v).
	\end{align}
	Alternatively, using the above spherical coordinates and $(f^*)_v = -e^{-2h} f_v$,
	\begin{align*}
		\left\langle \axisDir, \left( f^* \right)_v(\omega,v) f(\omega,v) \right\rangle_{\R^3} = e^{-2h(\omega,\w(v)))} r(v)^2 \tilde\theta'(v).
	\end{align*}
	Combining this with the previous equation and expression for $r(v)^2$ gives, in terms of $s = e^{-h(\omega,\w(v))}$,
	\begin{align*}
		\left\langle \axisDir, e^{-h(\omega,\w(v))} n(\omega,v) \right\rangle_{\R^3} = - R(\omega) s^2 (1-a_1(s(v))) \tilde \theta'(v).
	\end{align*}
	On the other hand, \eqref{eq:axialDirectionWeightedGaussMapComponent} is
$
		\left\langle \axisDir, e^{-h(\omega,\w(v))} n(\omega,v) \right\rangle_{\R^3} = s a_3(s).
$
	Equating the two expressions and solving for $\tilde \theta'(v)$ yields
	\begin{align*}
		\tilde \theta'(v) = -\frac{1}{R(\omega)}\frac{s^{-1} a_3(s)}{1-a_1(s)^2}.
	\end{align*}
	Converting the integral from $v = 0$ to $v = \V$ into twice the real integral from $s = s_1^-$ to $s = s_1^+$ yields the result.
\end{proof}

To find explicit formulas for the periodicity conditions, we use the local axis formula in Proposition~\ref{prop:bigZPrimeAtOmegaMovingFrameCoefficients}.

\subsection{Periodicity conditions as elliptic integrals}
Now the periodicity conditions of Theorem~\ref{thm:isothermicCylinderBonnetPeriodicityConditions} can be given as elliptic integrals.

\begin{theorem}
	\label{thm:bonnetPeriodicityAsAbelianIntegrals}
	Let $f(u,v)$ be an isothermic cylinder, with one generic family ($u$-curves) of closed planar curvature lines and one family ($v$-curves) of spherical curvature lines, from a fundamental piece of rhombic type, determined by parameters $\omega, \delta, s_1, s_2$. 
	
	Then the arising Bonnet pair cylinders $f^\pm$ are tori if and only if
	\begin{enumerate}
		\item (Rationality condition)
			\begin{align}  \label{eq:rationality_integral}
				\frac{\angleFP}{2} = \int_{s_1^-}^{s_1^+} \frac{ Z_0}{\tilde{Q_2}(s)}\frac{Q_2(s)}{\sqrt{Q(s)}} ds \text{ satisfies } 
				k \angleFP \in 2\pi \N \text{ for some } k \in \N
			\end{align}
		and
		\item (Vanishing axial \Bpart{}- part)
		\begin{align}  \label{eq:B_part_integral}
			\int_{s_1^-}^{s_1^+} \frac{Q_2(s)}{\sqrt{Q(s)}} ds = 0.
		\end{align}
	\end{enumerate}
	Here, $s_1^-, s_1^+$ are the two real zeroes of $Q(s)$, and 
	\begin{align}
		Q(s) &= -(s-s_1)^2(s-s_2)^2 + \delta^{2}Q_3(s), \\
		Q_3(s) &= 2U_1'(\omega) s^3 -U_2(\omega) s^2 - 2 U'(\omega) s - U(\omega)^2, \\
		Q_2(s) &= -(s-s_1)(s-s_2) + \delta^2 U_1'(\omega) s,  \label{eq:Q_2}\\
		\tilde{Q}_2(s) &= Z_0(\omega, \delta,s_1,s_2)^2 s^2 -\nonumber \\ &\qquad - \left(1 + s U(\omega)^{-2} \left( U'(\omega) + s_1 s_2 U_1'(\omega)\right)\right)^2.
		\label{eq:tilde_Q_2}
	\end{align}
	The constant $Z_0(\omega, \delta, s_1, s_2) = |Z'(\omega)|$ depends on the parameters via
	\begin{align}
		\begin{aligned}
		Z_0^2=|Z'(\omega)|^2 = U(\omega)^{-2} &\left( 2(s_1 + s_2)U_1'(\omega) + \delta^2 U_1'(\omega)^2 - U_2(\omega)\right) + \\ & + U(\omega)^{-4} \left( U'(\omega) + s_1 s_2 U_1'(\omega)\right)^2.
		\end{aligned}
	\end{align}
\end{theorem}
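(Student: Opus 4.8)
The plan is to derive both conditions by feeding the elliptic-integral formulas already established for the angle and the axial $\Bpart{}$-part into the reduction supplied by Theorem~\ref{thm:isothermicCylinderBonnetPeriodicityConditions}. That theorem tells us $f^\pm$ are tori exactly when (i) the generating rotation angle satisfies $k \angleFP \in 2\pi\N$ for some $k \in \N$ and (ii) $\langle \axisDir, \int_0^{\V} e^{-h(\omega,\w(v))} n(\omega,v)\, dv \rangle_{\R^3} = 0$. Lemma~\ref{lem:axialBPartAbelianIntegralGenericA3} already rewrites the integral in (ii) as $2\delta \int_{s_1^-}^{s_1^+} s\, a_3(s)/\sqrt{\mathcal{Q}(s)}\, ds$, and Lemma~\ref{lem:angleFPAbelianIntegralGenericA1A3} rewrites $\angleFP$ as $-\tfrac{2\delta}{R(\omega)} \int_{s_1^-}^{s_1^+} \tfrac{1}{s}\,\tfrac{1}{1-a_1(s)^2}\,\tfrac{a_3(s)}{\sqrt{\mathcal{Q}(s)}}\, ds$, where $a_1(s),a_3(s)$ are the $\hat{f_u}$- and $n$-components of the axis $\axisDir$ along the spherical curve $u=\omega$, expressed in the variable $s = e^{-h(\omega,\w(v))}$ that lives on $\mathcal{Q}$. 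So the whole task is to substitute the explicit local formula for the axis and simplify.

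The explicit axis comes from the collinearity lemma \eqref{eq:A-Z} (a consequence of Theorem~\ref{thm:planarSphericalAxisConePoint} together with the rotational symmetry of the fundamental piece), giving $\axisDir = \pm Z'(\omega)/|Z'(\omega)|$, and from Lemma~\ref{lem:bigZPrimeAtOmegaMovingFrameCoefficients}, which provides $a_i(s) = \pm z_i(s)$ with $z_1,z_2,z_3$ written out in $s,\delta,s_1,s_2$. Two elementary identities then collapse everything. First, $(s-s_1)(s-s_2) - \delta^2 U_1'(\omega)s = -\mathcal{Q}_2(s)$ with $\mathcal{Q}_2$ as in \eqref{eq:Q_2}, so that $z_3(s) = -\tfrac{s^{-1}R(\omega)\delta^{-1}}{|Z'(\omega)|}\,\mathcal{Q}_2(s)$. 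Second, using $R(\omega)^2 = U(\omega)^{-2}$ and $Z_0 = |Z'(\omega)|$ (see \eqref{eq:bigZPrimeNormSquared}), one gets $1 - z_1(s)^2 = \tilde{\mathcal{Q}}_2(s)/(Z_0^2 s^2)$ with $\tilde{\mathcal{Q}}_2$ as in \eqref{eq:tilde_Q_2}.

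Substituting these into the two integrals finishes the proof. For (ii), $2\delta \int s\, a_3(s)/\sqrt{\mathcal{Q}}\, ds = \mp \tfrac{2R(\omega)}{Z_0} \int_{s_1^-}^{s_1^+} \mathcal{Q}_2(s)/\sqrt{\mathcal{Q}(s)}\, ds$, which vanishes iff $\int_{s_1^-}^{s_1^+} \mathcal{Q}_2(s)/\sqrt{\mathcal{Q}(s)}\, ds = 0$, i.e.\ \eqref{eq:B_part_integral}, since $R(\omega), Z_0 \neq 0$. For (i), the factor $\tfrac{1}{s}\tfrac{1}{1-a_1(s)^2} = \tfrac{Z_0^2 s}{\tilde{\mathcal{Q}}_2(s)}$ multiplies $a_3(s)/\sqrt{\mathcal{Q}}$; the $s$-powers and the $\delta$, $R(\omega)$ factors cancel, leaving $\angleFP = \pm 2 Z_0 \int_{s_1^-}^{s_1^+} \tfrac{\mathcal{Q}_2(s)}{\tilde{\mathcal{Q}}_2(s)\sqrt{\mathcal{Q}(s)}}\, ds$, which is exactly \eqref{eq:rationality_integral} up to an overall sign that does not affect the condition $k\angleFP \in 2\pi\N$. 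Finally, the stated expression for $Z_0^2 = |Z'(\omega)|^2$ in terms of $\omega,\delta,s_1,s_2$ is just \eqref{eq:bigZPrimeNormSquared} with $R(\omega) = -U(\omega)^{-1}$.

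The substitutions are routine, so the real work is bookkeeping rather than insight. I would be careful about three things: keeping straight which of the two nested elliptic curves ($\mathcal{Q}_3$ for the planar $u$-curves, $\mathcal{Q}$ for the spherical $v$-curves) governs which integral; justifying that a $v$-period integral equals twice the real integral of $s$ over $[s_1^-, s_1^+]$ (this uses $v'(s) = \delta/\sqrt{\mathcal{Q}}$ and that $\w'(s) = 1/\sqrt{\mathcal{Q}_3}$ has constant sign, since the real oval of $\mathcal{Q}_3$ strictly contains $[s_1^-, s_1^+]$); and checking that the integrands have only integrable square-root singularities at the endpoints, in particular that $\tilde{\mathcal{Q}}_2(s)$ does not vanish in the interior of $[s_1^-,s_1^+]$ --- equivalently $|a_1(s)| < 1$ there, which holds because the spherical curve $f(\omega,v)$ reaches the poles aligned with $\axisDir$ only at the turning points of its $v$-motion. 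These are the only places where genericity of the parameters is used; everything else is algebra.
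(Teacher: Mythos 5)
Your proposal is correct and follows essentially the same route as the paper: it reduces the statement to Theorem~\ref{thm:isothermicCylinderBonnetPeriodicityConditions} via Lemmas~\ref{lem:axialBPartAbelianIntegralGenericA3} and \ref{lem:angleFPAbelianIntegralGenericA1A3}, substitutes the explicit axis coefficients from Lemma~\ref{lem:bigZPrimeAtOmegaMovingFrameCoefficients} with $R(\omega)=-U(\omega)^{-1}$, and simplifies using the identities $z_3(s)\propto\mathcal{Q}_2(s)$ and $1-z_1(s)^2=\tilde{\mathcal{Q}}_2(s)/(Z_0^2 s^2)$, exactly as in the paper's proof (including the observation that the overall sign of $\angleFP$ is immaterial). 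Your closing cautions about the $s$-substitution and endpoint singularities are sound but go slightly beyond what the paper records; they do not change the argument.
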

\begin{proof}
	This follows by combining the results of Lemma~\ref{lem:axialBPartAbelianIntegralGenericA3} and Lemma~\ref{lem:angleFPAbelianIntegralGenericA1A3}, for the periodicity conditions in terms of the coefficients of the moving frame for the axis $\axisDir$ with their explicit formulas (up to a common sign) from Proposition~\ref{prop:bigZPrimeAtOmegaMovingFrameCoefficients}. Note we also used that $R(\omega)=-U(\omega)^{-1}$ \eqref{eq:bigUAtOmegaAndRadius}.
	
	To ease computation in the rationality condition, rewrite the relevant part of the integrand of $\angleFP$ as follows.
	\begin{align*}
	-\frac{2 \delta}{R(\omega)} \frac{1}{s}\frac{1}{(1 - a_1(s)^2)} a_3(s) = 2\frac{\frac{-\delta}{R(\omega)}s a_3(s)}{(s^2 - (s a_1(s))^2)}.
	\end{align*}
	Now, replace the $a_i$ with the relevant expressions for $z_i$ in Proposition~\ref{prop:bigZPrimeAtOmegaMovingFrameCoefficients}.
	\begin{align*}
	2  \frac{\frac{Q_2(s)}{|Z'(\omega)|}}{s^2 - \left(\frac{1 + s R(\omega)^2 \left( U'(\omega) + s_1 s_2 U_1'(\omega)\right)}{|Z'(\omega)|}\right)^2} = 2 |Z'(\omega)| \frac{Q_2(s)}{\tilde{Q}_2(s)}.
\end{align*}
	This proves \eqref{eq:rationality_integral}. Technically, we only know $\angleFP$ up to sign, but this does not impact the result. 
\end{proof}

We prove that these conditions can be simultaneously satisfied by studying the limit as the parameter $\delta$ goes to zero.

\subsection{Asymptotics for the periodicity conditions as $\delta \to 0$}
Let us investigate the asymptotic behavior of the integrals (\ref{eq:rationality_integral}) and (\ref{eq:B_part_integral}) in the limit $\delta \to 0$. In the degenerate case $\delta=0$ the polynomial $Q(s)$ has two double zeros at $s_1$ and $s_2$. For small $\delta$ they split into pairs $s_1^{\pm}$ and $s_2^{\pm}$ converging  to $s_1$ and $s_2$ respectively for $\delta\to 0$. A direct computation gives the following asymptotics for $s_1^{\pm}$:
\begin{equation}
\label{eq:alpha-beta}
\begin{aligned}
s_1^{\pm}=s_1\pm \alpha\delta + \beta \delta^2 + O(\delta^3), \quad \delta\to 0, \text{ with }\\
\alpha=\frac{\sqrt{Q_3(s_1)}}{s_1-s_2}, \quad \beta=\frac{1}{2}\frac{Q'_3(s_1)}{(s_1-s_2)^2}-\frac{Q_3(s_1)}{(s_1-s_2)^3},
\end{aligned}
\end{equation}
and similar asymptotics for $s_2^{\pm}$.

Since $s_1^\pm$ are real, and the elliptic curve $Q$ is of rhombic  type, the other pair $s_2^{\pm}$ must be a pair of complex conjugated points $s_2^-=\overline{s_2^+}$. Due to (\ref{eq:alpha-beta}) these conditions are equivalent to
\begin{equation}
\label{eq:Q_3><0}
Q_3(s_1)>0 \quad \text{ and } \quad Q_3(s_2)<0.
\end{equation}

By the change of variables $ s \to t$ defined by
$$
s=\frac{s_1^+ + s_1^-}{2}+\frac{s_1^+ - s_1^-}{2} t
$$
the integration interval $s\in[s_1^-, s_1^+]$ in the periodicity conditions (\ref{eq:rationality_integral}) and (\ref{eq:B_part_integral}) transforms to $t \in [-1,1]$. Note that
\begin{equation}
\label{eq:s(t)}
s=s_1+\alpha \delta t + \beta\delta^2 + O(\delta^3).
\end{equation}
In terms of the new variable the differential $ds/\sqrt{Q(s)}$ becomes
$$
\frac{2 dt}{\sqrt{1-t^2} \sqrt{(s_1^+ + s_1^- + (s_1^+ - s_1^-)t -2 s_2^+)(s_1^+ + s_1^- + (s_1^+ - s_1^-)t -2 s_2^-)}}
$$
with the asymptotics
$$
\frac{ds}{\sqrt{Q(s)}}=\frac{dt}{\sqrt{1-t^2}(s_1-s_2)}\left( 1- \frac{\delta\alpha t}{s_1-s_2}+ O(\delta^3)\right).
$$
Substituting (\ref{eq:s(t)}) into (\ref{eq:Q_2}) we obtain
$$
Q_2(s)=-\delta\alpha t (s_1-s_2) + \delta^2\left(-\beta (s_1-s_2)-\alpha^2 t^2+U_1'(\omega) s_1\right) +O(\delta^3).
$$
For the integral (\ref{eq:B_part_integral}) this implies
\begin{eqnarray}
\int_{s_1^-}^{s_1^+}\frac{Q_2(s)ds}{\sqrt{Q(s)}}&=
\int_{-1}^{1}\frac{\delta}{\sqrt{1-t^2}}\left(-\alpha t +\delta\left( -\beta + \frac{U_1'(\omega) s_1}{s_1-s_2}\right) +O(\delta^2)  \right)dt  \nonumber \\ 
&=
 \frac{\pi \delta^2}{(s_1-s_2)^3} A +O(\delta^3), \label{eq:asymptotics_periodicity_vanishing} 
\end{eqnarray}
where
\begin{equation}
A=(s_1-s_2)^2 U_1'(\omega) s_1 -\frac{1}{2} Q'_3(s_1)(s_1-s_2) +Q_3(s_1).
\label{eq:A}
\end{equation}
The coefficients in (\ref{eq:rationality_integral}) can be computed similarly. For brevity we omit $\omega$ in $U(\omega), U_2(\omega), U'(\omega), U_1'(\omega)$ below. 
\begin{align}
Z_0&=Z+O(\delta^2) \text{ with } \\ \label{eq:Z} Z&=\frac{1}{U^2} \sqrt{(U'+s_1 s_2 U'_1)^2 +U^2(2(s_1+s_2)U'_1-U_2)} \text{ and }\\
\frac{1}{\tilde{Q}_2(s)}&=\frac{1}{\tilde{Q}_2(s_1)}\left(1-  \delta\alpha t \frac{\tilde{Q'}_2(s_1)}{\tilde{Q}_2(s_1)} + O(\delta^2)\right).
\end{align}
For the integral (\ref{eq:rationality_integral}) we find
$$
\frac{Z\delta}{\tilde{Q}_2(s_1)}\int_{-1}^{1}\left(-\alpha t +\delta\left( -\beta + \frac{U_1' s_1}{s_1-s_2} + \alpha^2 t^2 \frac{\tilde{Q'}_2(s_1)}{\tilde{Q}_2(s_1)}\right) +O(\delta^2)  \right)\frac{dt}{\sqrt{1-t^2}}
$$
so that
\begin{align}
\int_{s_1^-}^{s_1^+}\frac{Z_0}{\tilde{Q}_2(s)}\frac{Q_2(s)ds}{\sqrt{Q(s)}}=
\frac{Z\delta^2}{\tilde{Q}_2(s_1)}\left( \frac{\pi A}{(s_1-s_2)^3} + \alpha^2 \frac{\pi}{2}\frac{\tilde{Q'}_2(s_1)}{\tilde{Q}_2(s_1)}\right)+O(\delta^3).
\label{eq:rationality_asymptotic}
\end{align}

\begin{lemma}
\label{lem:periodicity_spherical_implicit}
The periodicity conditions (\ref{eq:rationality_integral}) and (\ref{eq:B_part_integral}) have solutions for sufficiently small $\delta$ if there exist 
$s_1$ and $s_2$ satisfying the following conditions:
\begin{itemize}
	\item[(i)] elliptic curve of rhombic type: inequalities in \eqref{eq:Q_3><0},
	\item[(ii)] vanishing axial \Bpart{}-part:  $A(s_1, s_2)=0$ with $A(s_1, s_2)$ given by (\ref{eq:A}),
	\item[(iii)] real square root in (\ref{eq:Z}): $(U'+s_1 s_2 U'_1)^2 +U^2(2(s_1+s_2)U'_1-U_2)>0$,
	\item[(iv)] non-degeneracy of the rationality condition:  $(2s_1+s_2)s_1 U'_1-s_1U_2-U'\neq 0$,
	\item[(v)] non-degeneracy of the vanishing condition: $(s_1+2s_2)s_1 U'_1-s_1U_2-U'\neq 0$.
\end{itemize}
\end{lemma}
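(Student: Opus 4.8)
The plan is to keep $\omega$ (hence the rhombic elliptic curve $\mathcal{Q}_3$ with its critical $\omega$) and $s_1$ fixed at values for which the hypotheses hold, to use the implicit function theorem to express $s_2$ as a function of $\delta$ so that the vanishing condition \eqref{eq:B_part_integral} holds identically, and then to use the intermediate value theorem to select a sequence $\delta_k\to0^+$ along which the rationality condition \eqref{eq:rationality_integral} is also met. By Theorem~\ref{thm:bonnetPeriodicityAsAbelianIntegrals}, for each such $\delta_k$ the parameters $(\omega,\delta_k,s_1,s_2)$ then determine an isothermic torus whose Bonnet pair surfaces are compact. (The lemma merely posits the existence of suitable $s_1,s_2$; exhibiting them for a concrete $\omega$ is done afterwards.)

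The first and most delicate step is to upgrade the asymptotic expansions \eqref{eq:asymptotics_periodicity_vanishing} and \eqref{eq:rationality_asymptotic} into assertions of real-analytic dependence on all parameters through $\delta=0$. After the $\delta$-dependent affine substitution $s\mapsto t$ of \eqref{eq:s(t)}, which sends the real oval $[s_1^-,s_1^+]$ to $[-1,1]$, each periodicity integral takes the form $\delta\int_{-1}^{1}G(t,\delta)\,(1-t^2)^{-1/2}\,dt$ with $G$ jointly real-analytic in $t$, $\delta$, and the remaining parameters: the four branch points depend analytically on $\delta$ by \eqref{eq:alpha-beta}, and by \eqref{eq:Q_3><0} the pair near $s_2$ stays a complex conjugate pair, bounded away from $[-1,1]$, for small $\delta$. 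The decisive input is $\int_{-1}^{1}t\,(1-t^2)^{-1/2}\,dt=0$, which kills the $O(\delta)$ term of the $\Bpart{}$-integral; consequently $\widehat{\mathcal{B}}(\delta,s_1,s_2):=\delta^{-2}\int_{s_1^-}^{s_1^+}\mathcal{Q}_2(s)\,\mathcal{Q}(s)^{-1/2}\,ds$ extends real-analytically to $\delta=0$ with value $\pi A(s_1,s_2)/(s_1-s_2)^3$, and $\delta^{-2}\,\angleFP/2$ extends real-analytically with $\delta=0$ value $\pi Z\,\tilde{\mathcal{Q}}_2(s_1)^{-1}\big(A(s_1,s_2)/(s_1-s_2)^3+\tfrac12\alpha^2\,\tilde{\mathcal{Q}}_2'(s_1)/\tilde{\mathcal{Q}}_2(s_1)\big)$, with $A$ as in \eqref{eq:A} and $Z$ as in \eqref{eq:Z}.

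With these analytic functions available, I would first enforce \eqref{eq:B_part_integral}: hypothesis (ii) says $\widehat{\mathcal{B}}(0,s_1,s_2)=0$, and a short computation gives $\partial_{s_2}A=(s_1+2s_2)\,s_1\,U_1'(\omega)-s_1\,U_2(\omega)-U'(\omega)$, which is exactly the quantity in hypothesis (v), hence nonzero, so the implicit function theorem produces a real-analytic $s_2=S_2(\delta)$ near $0$ with $S_2(0)=s_2$ and $\widehat{\mathcal{B}}(\delta,s_1,S_2(\delta))\equiv0$. Hypotheses (i) and (iii) are open (and $\mathcal{Q}_3(s_1)>0$ forces $s_1>s_0$, keeping the real oval of $\mathcal{Q}$ inside that of $\mathcal{Q}_3$, while $\mathcal{Q}_3(s_2)<0$ keeps $\mathcal{Q}$ of rhombic type), so the parameters $(\omega,\delta,s_1,S_2(\delta))$ still define an isothermic cylinder from a fundamental piece of rhombic type with vanishing axial $\Bpart{}$-part for all small $\delta$. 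Along this curve $A(s_1,S_2(\delta))=O(\delta)$, so the rationality expansion collapses to $\angleFP(\delta)/2=\tfrac12\pi Z\alpha^2\,\tilde{\mathcal{Q}}_2'(s_1)\,\tilde{\mathcal{Q}}_2(s_1)^{-2}\,\delta^2+O(\delta^3)$; evaluating at $\delta=0$ with $R(\omega)=-U(\omega)^{-1}$ one checks $\tilde{\mathcal{Q}}_2(s_1)=U(\omega)^{-2}\mathcal{Q}_3(s_1)$ (nonzero by (i)) and $\tilde{\mathcal{Q}}_2'(s_1)=2U(\omega)^{-2}\big((2s_1+s_2)\,s_1\,U_1'(\omega)-s_1\,U_2(\omega)-U'(\omega)\big)$ (nonzero by (iv)), while $\alpha^2=\mathcal{Q}_3(s_1)/(s_1-s_2)^2\neq0$ by (i) and $Z\neq0$ by (iii). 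Hence $\angleFP(\delta)=c\,\delta^2+O(\delta^3)$ with $c\neq0$; after replacing $\angleFP$ by $-\angleFP$ if necessary (the sign is immaterial, cf.\ the proof of Theorem~\ref{thm:bonnetPeriodicityAsAbelianIntegrals}), $\angleFP$ is continuous and strictly positive on some $(0,\delta_0]$ and $\angleFP(\delta)\to0$ as $\delta\to0^+$, so for every $k\in\N$ with $2\pi/k<\angleFP(\delta_0)$ the intermediate value theorem supplies $\delta_k\in(0,\delta_0)$ with $k\,\angleFP(\delta_k)=2\pi$. For these $\delta_k$ both \eqref{eq:rationality_integral} and \eqref{eq:B_part_integral} hold, and since $0<\angleFP(\delta_k)<2\pi$ the period $\V$ of $\w(v)$ does not already close $f$.

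I expect the genuine obstacle to be the first step. As $\delta\to0$ the two branch-point pairs $s_1^\pm\to s_1$ and $s_2^\pm\to s_2$ collide, and to run the implicit function theorem one must know that these confluent elliptic integrals, divided by $\delta^2$, depend \emph{differentiably}, not merely asymptotically, on $\delta,s_1,s_2,\omega$. The normalisation $s\mapsto t$ reduces this to differentiation under the integral sign for $\int_{-1}^{1}(\,\cdot\,)(1-t^2)^{-1/2}\,dt$, and the identity $\int_{-1}^{1}t\,(1-t^2)^{-1/2}\,dt=0$ provides the extra power of $\delta$ that is needed; granting this, the implicit-function and intermediate-value arguments are routine.
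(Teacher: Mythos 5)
Your proposal is correct and follows essentially the same route as the paper's proof: rescale the $\Bpart{}$-integral by $\delta^{-2}$ so it extends smoothly to $\delta=0$ with value $\pi A(s_1,s_2)/(s_1-s_2)^3$, apply the implicit function theorem in $s_2$ at $(s_1^0,s_2^0,0)$ using condition (v) as the nonvanishing of $\partial A/\partial s_2$, and then obtain the rationality condition by varying $\delta$, since along the solution curve $\angleFP$ behaves like a nonzero multiple of $\delta^2$ thanks to condition (iv). Your additional checks (analytic dependence after the substitution $s\mapsto t$, the identity $\tilde{\mathcal{Q}}_2(s_1)=U(\omega)^{-2}\mathcal{Q}_3(s_1)$, and the explicit intermediate-value argument solving $k\,\angleFP(\delta_k)=2\pi$) only make explicit what the paper asserts more briefly.
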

\begin{proof}
The periodicity condition (\ref{eq:B_part_integral}) is the vanishing condition for the integral 
$$
b(s_1,s_2,\delta):=\int_{s_1^-}^{s_1^+}\frac{Q_2(s)ds}{\sqrt{Q(s)}}.
$$
Its behavior for small non-vanishing $\delta$ is given by the asymptotic (\ref{eq:asymptotics_periodicity_vanishing}). The function
$$
\tilde{b}(s_1,s_2,\delta):=\frac{1}{\delta^2}b(s_1,s_2,\delta)
$$
is an analytic function for small non-vanishing $\delta$.

Let $s_1^0, s_2^0$ satisfy (i-v). We fix $s_1^0$ and apply the implicit function theorem to $\tilde{b}(s_1, s_2, \delta)=0$ at $(s_1, s_2, \delta)=(s_1^0, s_2^0, 0)$.  At this point $\tilde{b}(s_1^0, s_2^0, 0)=0$ and $\partial \tilde{b}(s_1,s_2,\delta)/\partial s_2 \neq 0$ is equivalent to $\partial A/\partial s_2 \neq 0$, which is the condition (v). The implicit function theorem guarantees the existence of a solution of $\tilde{b}(s_1, s_2(\delta), \delta)=0$  for small $\delta$  with $s_1=s_1^0$ and $s_2(0)=s_2^0$.

In the case when (ii) is satisfied the asymptotic (\ref{eq:rationality_asymptotic}) of the rationality condition (\ref{eq:rationality_integral}) becomes
$$
\theta (s_1,s_2,\delta)=\delta^2  
Z \alpha^2 \pi\frac{\tilde{Q'}_2(s_1)}{\tilde{Q}^2_2(s_1)}+O(\delta^3).
$$
The leading term does not vanish since $\tilde{Q'}_2(s_1)\neq 0$ is equivalent to (iv). Thus, the rationality of $\theta$  can be achieved by a variation of $\delta$. 
\end{proof}

Note that $s_1^0$ and $\omega$ (or, equivalently, the modulus $\tau$ of the elliptic curve) stay fixed in this consideration and can be treated as additional parameters. 

\begin{figure}[h]
\begin{center}
\includegraphics[width=0.5\linewidth]{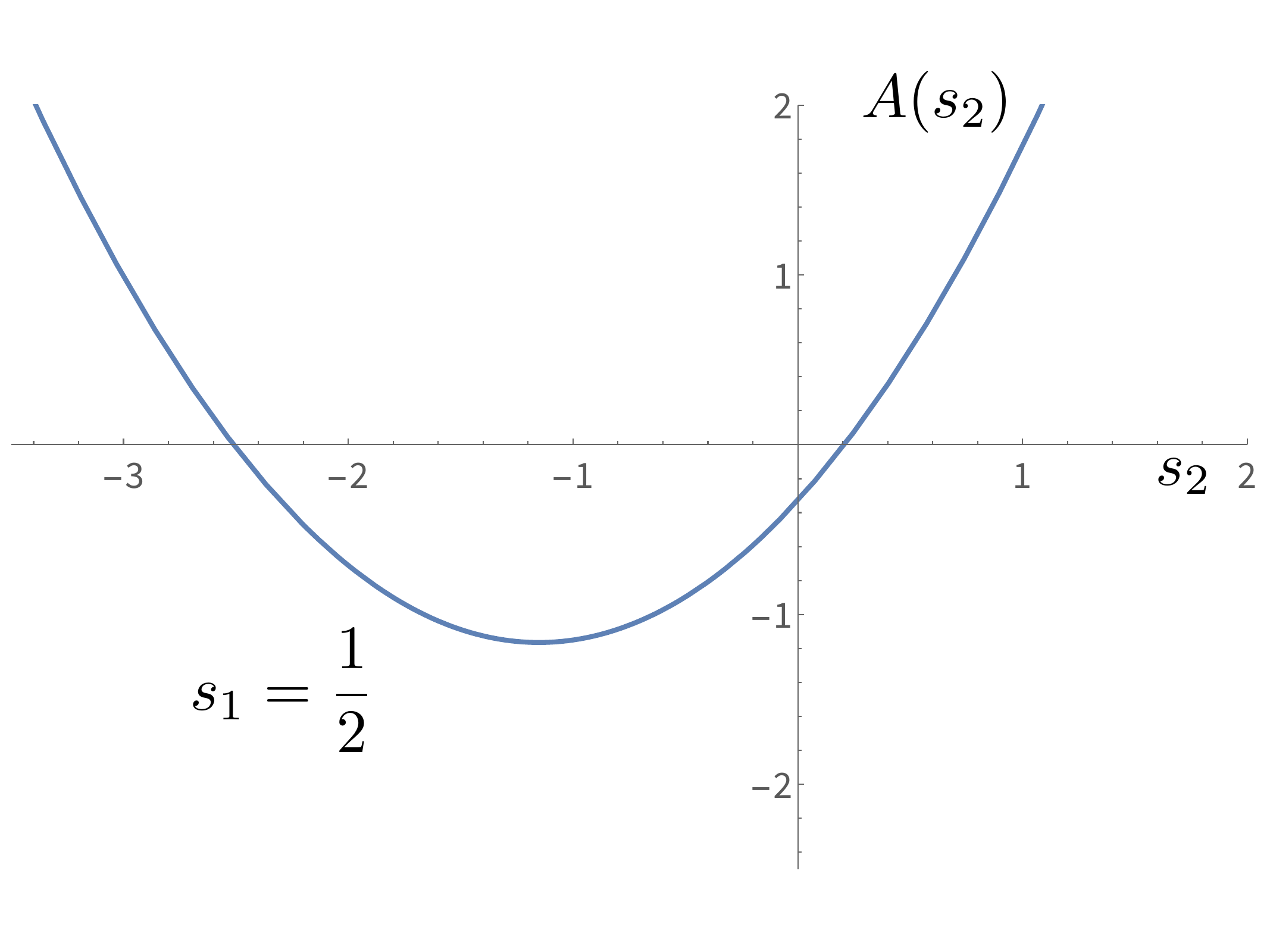}
\end{center}
\caption{Plot of the quadratic $A(s_1=\frac12, s_2)$ \eqref{eq:A} for the vanishing axial $\Bpart{}$ part condition, for $\Imc \tau \approx 0.3205128205$, critical $\omega \approx 0.3890180475$, and $\delta = 0.001$.}
\label{fig:zero_A}
\end{figure}

\begin{lemma}
\label{lem:exist_s_12}
There exist $s_1,s_2\in{\mathbb R}$ satisfying  conditions (i-v).
\end{lemma}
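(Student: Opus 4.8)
The strategy is to regard (ii) as the only genuine equation, produce a one-parameter family of pairs $(s_1,s_2)$ solving (i) and (ii), and then verify the \emph{open} conditions (iii)--(v) at a degenerate endpoint of that family where all three collapse to a single positive quantity. The preliminary observation is the shape of the governing cubic: on a rhombic curve with $\omega$ critical, $\mathcal{Q}_3(s)=2U_1'(\omega)s^3-U_2(\omega)s^2-2U'(\omega)s-U(\omega)^2$ has positive leading coefficient $2U_1'(\omega)>0$ and a single real root $s_0=\vartheta_1(\omega)^2/\vartheta_2(0)^2>0$ (the remaining two roots being complex conjugate), so $\mathcal{Q}_3(s)<0$ for $s<s_0$, $\mathcal{Q}_3(s)>0$ for $s>s_0$, and, $s_0$ being a simple root, $\mathcal{Q}_3'(s_0)>0$. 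In particular condition (i) is exactly $s_2<s_0<s_1$.

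\textbf{Solving (ii).} Write (ii), i.e.\ $A(s_1,s_2)=0$ of \eqref{eq:A}, as a quadratic in $t:=s_1-s_2$:
\[
U_1'(\omega)\,s_1\,t^2-\tfrac12\mathcal{Q}_3'(s_1)\,t+\mathcal{Q}_3(s_1)=0 .
\]
Fix $s_1=s_0+\eta$ with $\eta>0$ small. For $\eta$ small the leading coefficient $U_1'(\omega)s_1$ is positive, $\mathcal{Q}_3(s_1)>0$ and $\mathcal{Q}_3'(s_1)>0$, and the discriminant tends to $\tfrac14\mathcal{Q}_3'(s_0)^2>0$, so both roots $t_\pm$ are real and positive (their sum and product are positive); moreover $t_+\to\mathcal{Q}_3'(s_0)/(2U_1'(\omega)s_0)$ while $t_-\to0$, in fact $t_-=2\eta+O(\eta^2)$. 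Taking the \emph{smaller} root $t_-$ gives $s_2=s_1-t_-=s_0-\eta+O(\eta^2)<s_0$, so $(s_1,s_2)=(s_0+\eta,\,s_2(\eta))$ satisfies (i) and (ii) for all small $\eta>0$, and $(s_1,s_2)\to(s_0,s_0)$ as $\eta\to0$.

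\textbf{Checking (iii)--(v).} Since (iii)--(v) are open (non-vanishing, resp.\ strict positivity, of explicit polynomials in $s_1,s_2$ built from $U(\omega),U_1'(\omega),U_2(\omega),U'(\omega)$), it suffices to evaluate them at the confluent limit $(s_1,s_2)=(s_0,s_0)$ and then invoke continuity along the family. Using only $\mathcal{Q}_3(s_0)=0$, i.e.\ $U(\omega)^2=2U_1'(\omega)s_0^3-U_2(\omega)s_0^2-2U'(\omega)s_0$, a short computation shows that at $(s_0,s_0)$ conditions (iv) and (v) both become $3U_1'(\omega)s_0^2-U_2(\omega)s_0-U'(\omega)=\tfrac12\mathcal{Q}_3'(s_0)$, while the left-hand side of (iii) factors as $\big(3U_1'(\omega)s_0^2-U_2(\omega)s_0-U'(\omega)\big)^2=\big(\tfrac12\mathcal{Q}_3'(s_0)\big)^2$. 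Both are positive because $\mathcal{Q}_3'(s_0)>0$, so (iii)--(v) hold for $\eta$ small enough, completing the proof. (One may additionally vary the rhombic modulus $\Imc\tau\in(0,\Imc\tau_0)$, hence $\omega$, as a further parameter, but this is not needed.)

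\textbf{Main obstacle.} The only non-formal point is the collapse of (iii)--(v) at the confluent limit: one must carry out the algebraic reduction, using $\mathcal{Q}_3(s_0)=0$, that turns all three into powers of $\mathcal{Q}_3'(s_0)$, and one must be careful to take the root $t_-$ of the quadratic (ii) whose limiting pair is $(s_0,s_0)$ and not $t_+$ (whose limiting pair still satisfies (i) and (ii) but makes (iv) vanish, so the continuity argument fails for it). Everything else — positivity of $\mathcal{Q}_3'(s_0)$, realness and signs of $t_\pm$, and continuity — is routine. Should one prefer to bypass the limiting argument entirely, the claim can instead be verified at an explicit rhombic modulus, as illustrated for $\Imc\tau\approx0.32$ and $s_1=\tfrac12$ in Figure~\ref{fig:zero_A}.
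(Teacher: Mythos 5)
Your proof is correct, but it takes a genuinely different route from the paper's. The paper works in the opposite asymptotic regime: it sends $s_1\to\infty$ (with $U_1'(\omega)s_1\to+\infty$), solves $A=0$ from \eqref{eq:A} as a quadratic in $s_2$ to get the two branches $s_2^{(1)}\approx \frac{U'}{2s_1U_1'}$ and $s_2^{(2)}\approx -s_1+\frac{U_2}{U_1'}$, verifies \eqref{eq:Q_3><0} and condition (iii) in that limit, and then disposes of the non-degeneracy conditions (iv), (v) by noting they can be arranged by a small perturbation of $s_1$. You instead degenerate toward the confluent point $(s_1,s_2)\to(s_0,s_0)$ on the unique real root of $\mathcal{Q}_3$, taking the small root $t_-$ of the quadratic in $t=s_1-s_2$; your key computation — that at $(s_0,s_0)$, using $\mathcal{Q}_3(s_0)=0$, conditions (iv) and (v) both reduce to $3U_1'(\omega)s_0^2-U_2(\omega)s_0-U'(\omega)=\tfrac12\mathcal{Q}_3'(s_0)>0$ and the left side of (iii) to $\bigl(\tfrac12\mathcal{Q}_3'(s_0)\bigr)^2$ — checks out (I verified the algebra), and the positivity of $\mathcal{Q}_3'(s_0)$ follows as you say from the rhombic type (single simple real root) and $2U_1'(\omega)>0$. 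What each approach buys: yours handles (iii)--(v) in one clean continuity argument with no perturbation step, and your warning about the other root $t_+$ (whose confluent limit annihilates (iv)) is a genuine and correctly identified pitfall; the paper's large-$s_1$ analysis is terser on (iv), (v) but yields two explicit branches of solutions and keeps $s_1$ as a free parameter far from the branch point $s_0$ of $\mathcal{Q}_3$, which is closer in spirit to the parameter ranges used in the numerical examples. Both arguments establish exactly the statement, since conditions (i)--(v) of Lemma~\ref{lem:periodicity_spherical_implicit} are purely algebraic in $(s_1,s_2)$ for fixed $\omega$.
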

\begin{proof}
We analyze these conditions for large $s_1$. Let $s_1$ be either large positive or large negative with $U'_1(\omega)s_1\to +\infty$, such that $Q_3(s_1)>0$. The quadratic equation $A=0$ for $s_2$ has two solutions with the following asymptotic behavior: 
\begin{equation}
\label{eq:asymptotics_s12}
s_2^{(1)}=\frac{U'}{2s_1 U'_1}+ O(\frac{1}{s_1^2}), \quad s_2^{(2)}=-s_1+ \frac{U_2}{U'_1}+O(\frac{1}{s_1}), \quad s_1\to \infty.
\end{equation}
Both of them satisfy $Q_3(s_2)<0$. Indeed, we have in the limit $Q(s_2^{(1)})\to Q_3(0)=-U^2<0$, and $Q(s_2^{(2)})\to Q_3(-s_1+\frac{U_2}{U'_1})$. Since $Q_3(s)$ is degree 3,  $Q_3(s_1)$ and $Q_3(-s_1)$ have different signs for large $s_1$. 
Using the leading terms of the asymptotics (\ref{eq:asymptotics_s12}) we see that inequality (iii) is valid for $s_2^{(1)}, s_2^{(2)} $ for large $s_1$. Finally, conditions (iv) and (v) can be satisfied by a small perturbation of $s_1$.
\end{proof}

\section{Compact Bonnet pairs}
\label{sec:compact-bonnet-pairs}

We explicitly construct isothermic tori with one family of planar curvature lines that lead to compact Bonnet pairs of genus one.

We start with an observation about the rotational symmetry in the construction. Recalling  Definition~\ref{def:isothermicCylinderFromAFundamentalPiece}, we consider an isothermic torus $f(u,v)$ with one generic family ($u$-curves) of planar curvature lines and $\V$-periodic reparametrization function $\w(v)$ generated by a fundamental piece. Lemma~\ref{lem:rotationalSymmetryIsothermicCylinderFromFundamentalPiece} shows that the periodicity of $\w(v)$ corresponds to a rotational symmetry in $f(u,v)$. From the formulas for the differentials of the resulting Bonnet pairs \eqref{eq:KPP}, we see that the rotational symmetry carries over to the corresponding Bonnet pair tori. In particular, if the rotational symmetry of $f$ is
$
	f(u,v + \V) = \overline{R} f(u,v) R
$
for some fixed unit rotation quaternion $R$ then
\begin{align*}
	df^+(u,v + \V) 	= \overline{R} df^+(u,v) R \quad \text{ and  }	\quad df^-(u,v + \V) = \overline{R} df^-(u,v) R.
\end{align*}
Therefore, each Bonnet pair torus $f^+$ or $f^-$ is generated by rotating its respective transformed fundamental piece of $f$.

\begin{proposition}
	Let $f^+$ and $f^-$ be Bonnet tori arising from an isothermic torus $f$ with one family of planar curvature lines. Then $f^+, f^-$, and $f$ have the same rotational symmetry.
\end{proposition}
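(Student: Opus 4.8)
The plan is to transport the rotational symmetry of $f$ through the Kamberov--Pedit--Pinkall formula \eqref{eq:KPP} at the level of $1$-forms, and then integrate. Write $R := \Phi(0)^{-1}\Phi(\V) \in \H_1$ for the generating rotation of $f$, so that by Lemma~\ref{lem:rotationalSymmetryIsothermicCylinderFromFundamentalPiece} together with \eqref{eq:monodromy} we have
\[
	f(u,v+\V) = \bar R\, f(u,v)\, R, \qquad R = \cos(\angleFP/2) + \sin(\angleFP/2)\,\axisDir .
\]

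First I would check that the dual $1$-form is equivariant. Differentiating the identity above in $u$ and in $v$ (with $R$ constant) gives $f_u(u,v+\V) = \bar R\, f_u(u,v)\, R$ and $f_v(u,v+\V) = \bar R\, f_v(u,v)\, R$. Since $R$ is a unit quaternion, $R^{-1} = \bar R$, whence $(\bar R\, q\, R)^{-1} = \bar R\, q^{-1}\, R$ for all $q \in \H_*$; thus $(f_u)^{-1}$ and $(f_v)^{-1}$ transform in the same way, and \eqref{eq:isothermic_qual_quaternionic} yields $df^*(u,v+\V) = \bar R\, df^*(u,v)\, R$. Next, because $\pm\epsilon$ is a real scalar it commutes with $R$, so $\pm\epsilon - f(u,v+\V) = \bar R(\pm\epsilon - f(u,v))R$ and $\pm\epsilon + f(u,v+\V) = \bar R(\pm\epsilon + f(u,v))R$. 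Substituting these and the transformation of $df^*$ into \eqref{eq:KPP} and cancelling the internal factors $R\bar R = 1$ gives
\[
	df^\pm(u,v+\V) = \bar R\, df^\pm(u,v)\, R .
\]

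It remains to pass from $1$-forms to the immersions. The $1$-form $d\!\left(f^\pm(u,v+\V) - \bar R\, f^\pm(u,v)\, R\right)$ vanishes, so there is a constant $c^\pm \in \R^3$ with $f^\pm(u,v+\V) = \bar R\, f^\pm(u,v)\, R + c^\pm$; that is, $f^\pm$ has the rotational/screw symmetry $x \mapsto \bar R\, x\, R + c^\pm$, of the same angle $\angleFP$ and axis direction $\axisDir$ as $f$. To see it is a genuine rotation (no axial translation), iterate $k$ times, where $k\angleFP \in 2\pi\N$ is the period count from the rationality condition of Theorem~\ref{thm:isothermicCylinderBonnetPeriodicityConditions}: this yields $f^\pm(u,v+k\V) = \overline{R^k}\, f^\pm(u,v)\, R^k + \sum_{j=0}^{k-1}\overline{R^j}\, c^\pm\, R^j$, which must reduce to the identity because $f^\pm$ is a torus. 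Since $f^\pm$ spans an open set of $\R^3$ this forces $R^k = \pm1$ and $\sum_{j=0}^{k-1}\overline{R^j}\, c^\pm\, R^j = 0$; splitting $c^\pm$ into its $\axisDir$-component and its orthogonal complement, the orthogonal part of the sum vanishes as a full cyclic orbit of rotated copies of a fixed vector (exactly as in the proof of Lemma~\ref{lem:reductionToBPartAlongAVCurve}), while the axial part gives $k\langle \axisDir, c^\pm\rangle\,\axisDir = 0$. Hence $c^\pm \perp \axisDir$, so $x \mapsto \bar R\, x\, R + c^\pm$ is a rotation by $\angleFP$ about an axis parallel to $\axisDir$ — the same rotational symmetry as that of $f$ recorded in \eqref{eq:rotation_f}, up to the translation freedom inherent to $f^\pm$.

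The main obstacle is this last step: the differential-level equivariance is a two-line computation, but concluding a genuine rotational symmetry of the immersion rather than a screw motion requires ruling out an axial component in the integration constant $c^\pm$, and this is precisely the point at which the hypothesis that $f^\pm$ is closed is used — it is equivalent to the vanishing axial $\Bpart{}$-part condition of Theorem~\ref{thm:isothermicCylinderBonnetPeriodicityConditions}.
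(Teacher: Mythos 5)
Your proof is correct and follows essentially the same route as the paper: the generating rotation $R=\Phi(0)^{-1}\Phi(\V)$ is transported through the Kamberov--Pedit--Pinkall formula at the level of $1$-forms, giving $df^\pm(u,v+\V)=\overline{R}\,df^\pm(u,v)\,R$, exactly as in the paper's argument preceding the proposition. The only difference is that you additionally analyze the integration constant $c^\pm$ and use the closedness of $f^\pm$ (the vanishing-axial-$\Bpart{}$ mechanism of Lemma~\ref{lem:reductionToBPartAlongAVCurve}) to exclude an axial screw component; the paper leaves this step implicit, which is harmless since the Bonnet surfaces are defined only up to translation, but your treatment makes the conclusion precise.
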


\subsection{Existence from isothermic tori with planar and sphe\-rical curvature lines}
\label{sec:Bonnet-one_surface}
\begin{theorem}
	\label{thm:compactBonnetPairsSphericalExistence}
	There exists an analytic isothermic torus parametrized by $f(u,v)$ with one generic family of planar and one family of spherical curvature lines such that the analytic Bonnet pair surfaces $f^\pm(u,v)$, given in Theorem~\ref{thm:BPFromPlanarFormulas}, are tori.
	
	The compact analytic immersions $f^+$ and $f^-$ correspond via a mean curvature preserving isometry that is not a congruence.
\end{theorem}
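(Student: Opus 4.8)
The plan is to reduce everything to the two concrete periodicity conditions already packaged in Theorem~\ref{thm:bonnetPeriodicityAsAbelianIntegrals} and then show they can be simultaneously solved. So the proof has three layers: (1) invoke the construction of an isothermic cylinder from a fundamental piece of rhombic type with parameters $\omega,\delta,s_1,s_2$; (2) apply Theorem~\ref{thm:bonnetPeriodicityAsAbelianIntegrals}, which says the resulting Bonnet cylinders $f^\pm$ close up into tori exactly when the rationality condition \eqref{eq:rationality_integral} and the vanishing-axial-part condition \eqref{eq:B_part_integral} hold; (3) verify that there is an honest choice of parameters for which both hold. For (3) I would combine Lemma~\ref{lem:periodicity_spherical_implicit} and Lemma~\ref{lem:exist_s_12}: the latter produces $s_1,s_2\in\mathbb R$ satisfying conditions (i)--(v), and the former then uses the implicit function theorem (in the variable $s_2$, with $\delta$ small) to make the axial $\Bpart{}$ integral \eqref{eq:B_part_integral} vanish identically, after which the leading $\delta^2$ asymptotics \eqref{eq:rationality_asymptotic} of the rationality integral is a nonzero smooth function of $\delta$, so by varying $\delta$ continuously one lands on a value where $\angleFP/2$ is a rational multiple of $\pi$ (the rationals being dense). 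That pins down $\omega,\delta,s_1,s_2$, hence by Theorem~\ref{thm:planarIsothermicCylinderFormulas} an explicit isothermic torus $f$, and by Theorem~\ref{thm:BPFromPlanarFormulas} the explicit Bonnet tori $f^\pm$.

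The remaining content of the statement — that $f^+$ and $f^-$ are related by a mean curvature preserving isometry which is not a congruence — is \emph{not} something to be reproven here; it is exactly the conclusion of the Kamberov--Pedit--Pinkall Theorem~\ref{t:KPP} together with the proof given right after it. Indeed, \eqref{eq:KPP} writes $df^\pm=(\pm\epsilon-f)df^*(\pm\epsilon+f)$, the frames satisfy $df^+=T^{-1}df^-T$ with $T=(\epsilon+f)/(\epsilon-f)\in\H_1$ nonconstant, so the induced map is an isometry that is not a rigid motion, and the computation in that proof gives $H^+=H^-$. So once we know $f^\pm$ are \emph{closed} immersions of a torus, this part is automatic. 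The one small point worth recording is that $f$ being an immersion (no umbilics, $Q=\tfrac14 e^{2h}(k_1-k_2)\neq 0$ away from the degenerate loci) guarantees $f^\pm$ are genuine immersions via \eqref{eq:KPP}, since $\pm\epsilon\pm f$ act by scaled rotations; choosing $\epsilon$ generically (e.g. $\epsilon$ not in the range of $|f|$, which is possible since $f$ is compact) keeps $\pm\epsilon+f$ invertible everywhere.

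The main obstacle is step (3), specifically making both conditions hold \emph{at once}. The rationality condition is a codimension-zero condition in the sense that rationals are dense, but the angle $\angleFP(\delta)$ must genuinely move with $\delta$ — this is where the nonvanishing of $\tilde{\mathcal Q}_2'(s_1)$ (condition (iv)) and of the leading coefficient $Z\alpha^2\pi$ in the asymptotic expansion is essential, and one has to be careful that the $O(\delta^3)$ error does not swamp the $\delta^2$ leading term, i.e. that $\angleFP(\delta)$ is a nonconstant real-analytic (or at least smooth) function on a small interval $(0,\delta_*)$. Once that is established, density of $\mathbb Q$ in $\mathbb R$ finishes it. A secondary subtlety is bookkeeping the signs and branches: $\angleFP$ is only determined up to sign by Lemma~\ref{lem:angleFPAbelianIntegralGenericA1A3}, and one should check the rhombic-type hypotheses (real oval of $\mathcal Q_3$ strictly containing that of $\mathcal Q$, so $\w'(s)$ does not change sign) persist under the small perturbations of $s_2$ and $\delta$ — but these are open conditions, satisfied near the $\delta\to 0$ limit by continuity, so no real difficulty arises there.

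Finally, to get the \emph{more general} (non-spherical) examples promised in the introduction (Theorem~\ref{thm:Bonnet_pair_two}), one perturbs the reparametrization function $\w(v)$ away from the spherical-$v$-curve form while keeping it periodic with the same period and keeping the $u$-curves planar; the conditions of Theorem~\ref{thm:isothermicCylinderBonnetPeriodicityConditions} are an open-plus-one-codimension system, and an implicit-function-theorem argument around the spherical solution found above, using the remaining functional freedom in $\w(v)$ to correct the single scalar axial-$\Bpart{}$ integral while a discrete rationality choice handles $\angleFP$, yields a functional family of compact Bonnet pairs. That last part is only sketched here since the present statement concerns the spherical case.
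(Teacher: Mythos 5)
Your proposal is correct and follows essentially the same route as the paper: reduce to the two elliptic-integral periodicity conditions of Theorem~\ref{thm:bonnetPeriodicityAsAbelianIntegrals}, then combine Lemma~\ref{lem:exist_s_12} with the implicit-function/small-$\delta$ argument of Lemma~\ref{lem:periodicity_spherical_implicit} to satisfy both at once; your extra remarks on why the isometry is non-congruent and mean curvature preserving (via Theorem~\ref{t:KPP}) are points the paper leaves implicit but add nothing contradictory.
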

\begin{proof}
	The result is proven by combining Theorem~\ref{thm:bonnetPeriodicityAsAbelianIntegrals} and Lemma~\ref{lem:exist_s_12}, as follows.
	
	Consider the subset of isothermic surfaces with one family of planar and one family of spherical curvature lines given by Definition~\ref{def:isothermicCylinderFromFundamentalPieceOfRhombicType}. By Lemma~\ref{lem:moduliSpaceIsothermicCylindersFundamentalPieceRhombicType}, every such \emph{isothermic cylinder from a fundamental piece of rhombic type} $f(u,v)$ is determined by four real parameters $\Imc \tau, \delta, s_1, s_2$.
	
	By Lemma~\ref{lem:exist_s_12}, for each $\Imc 
	\tau$, with $0 < \Imc \tau < \Imc \tau_0 $, and sufficiently small $\delta > 0$, there exists $s_1$ and $s_2$ such that the periodicity conditions of Theorem~\ref{thm:bonnetPeriodicityAsAbelianIntegrals} are satisfied. Specifically, the rationality condition \eqref{eq:rationality_integral} is satisfied, closing $f(u,v)$ into a torus, and the vanishing axial \Bpart{}-part condition \eqref{eq:B_part_integral} is satisfied, closing both Bonnet pair surfaces $f^+(u,v)$ and $f^-(u,v)$ into tori.
\end{proof}

\begin{figure}[tbh!]
	\begin{center}
		\includegraphics[width=.4\textwidth]{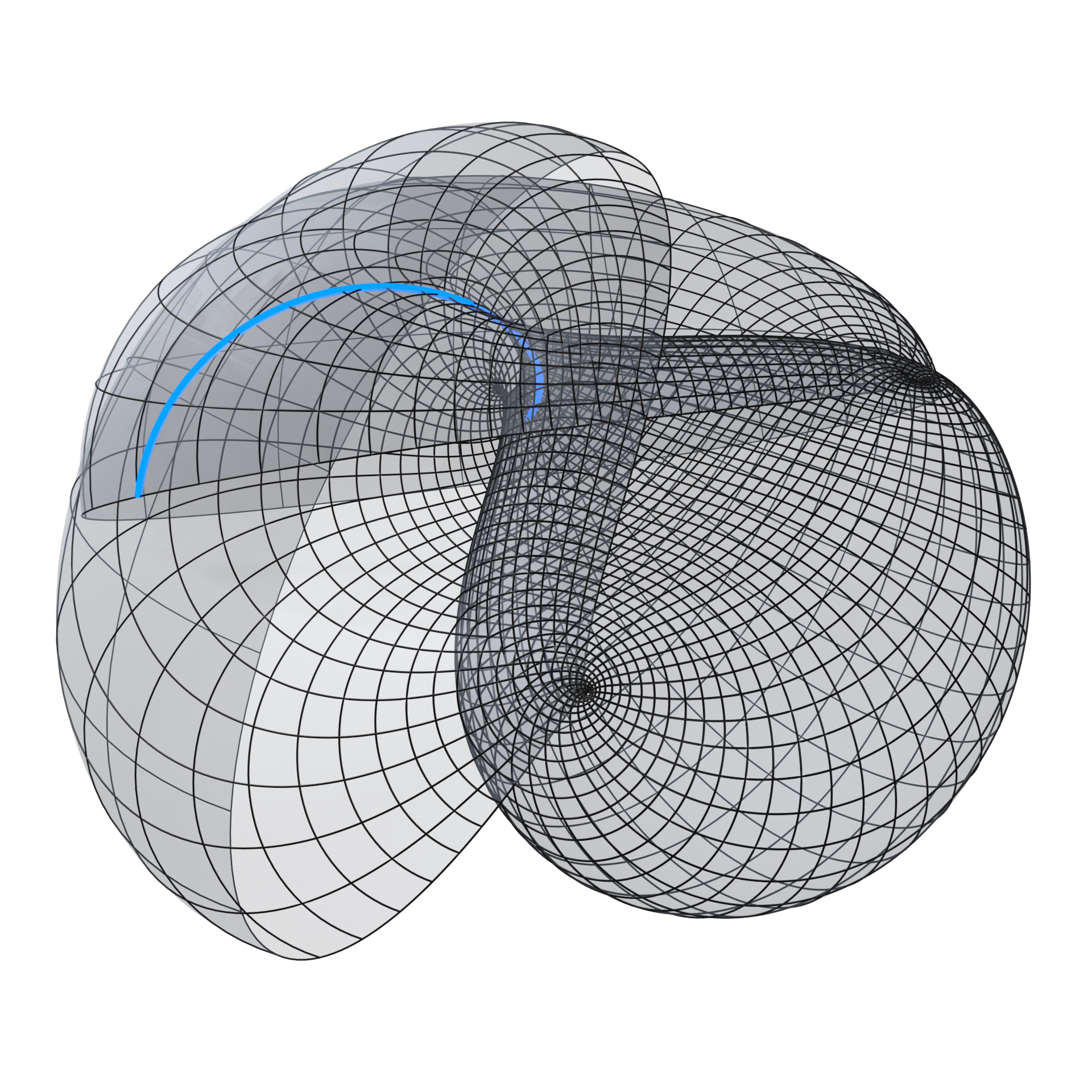}%
		\includegraphics[width=.4\textwidth]{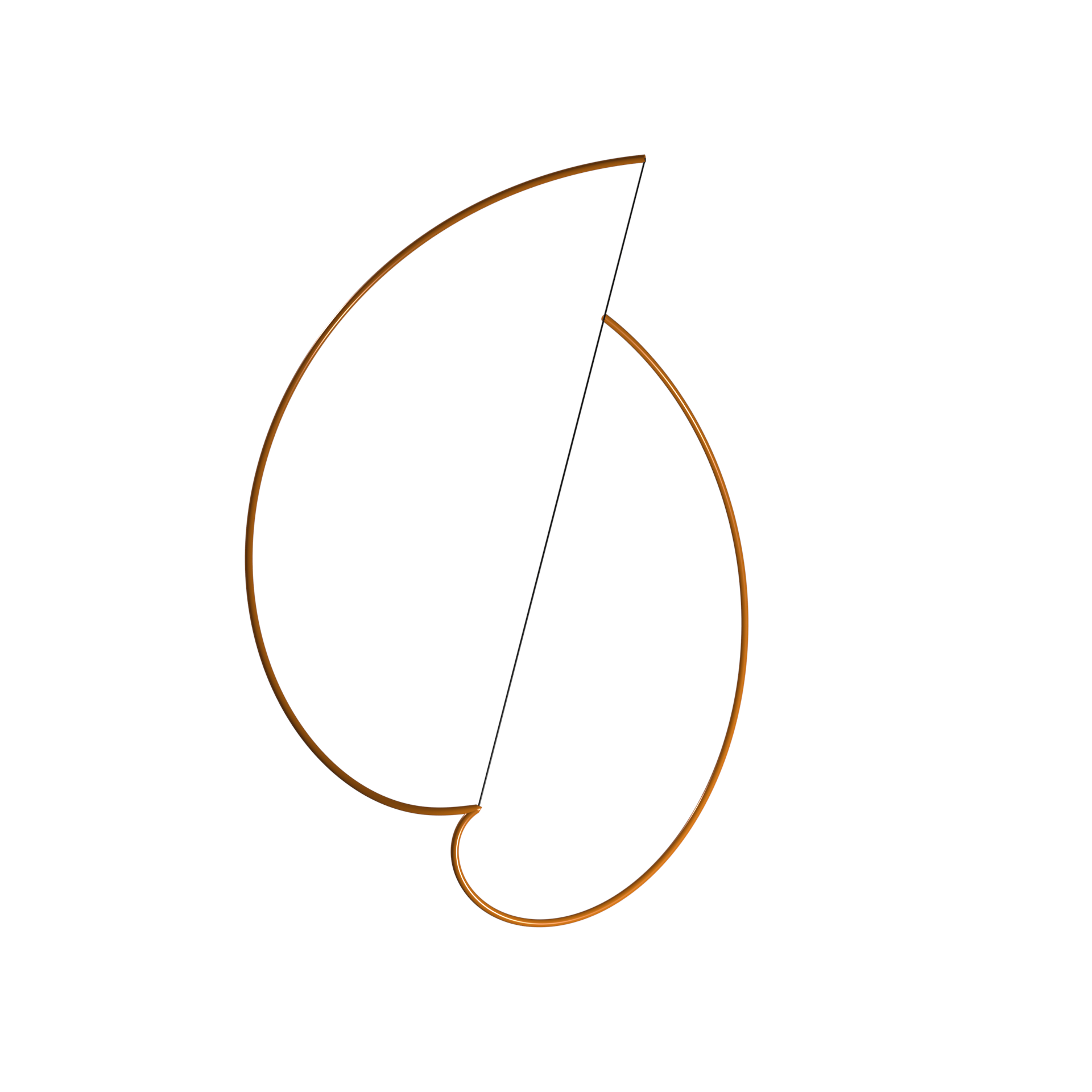}\\
		\includegraphics[width=.4\textwidth]{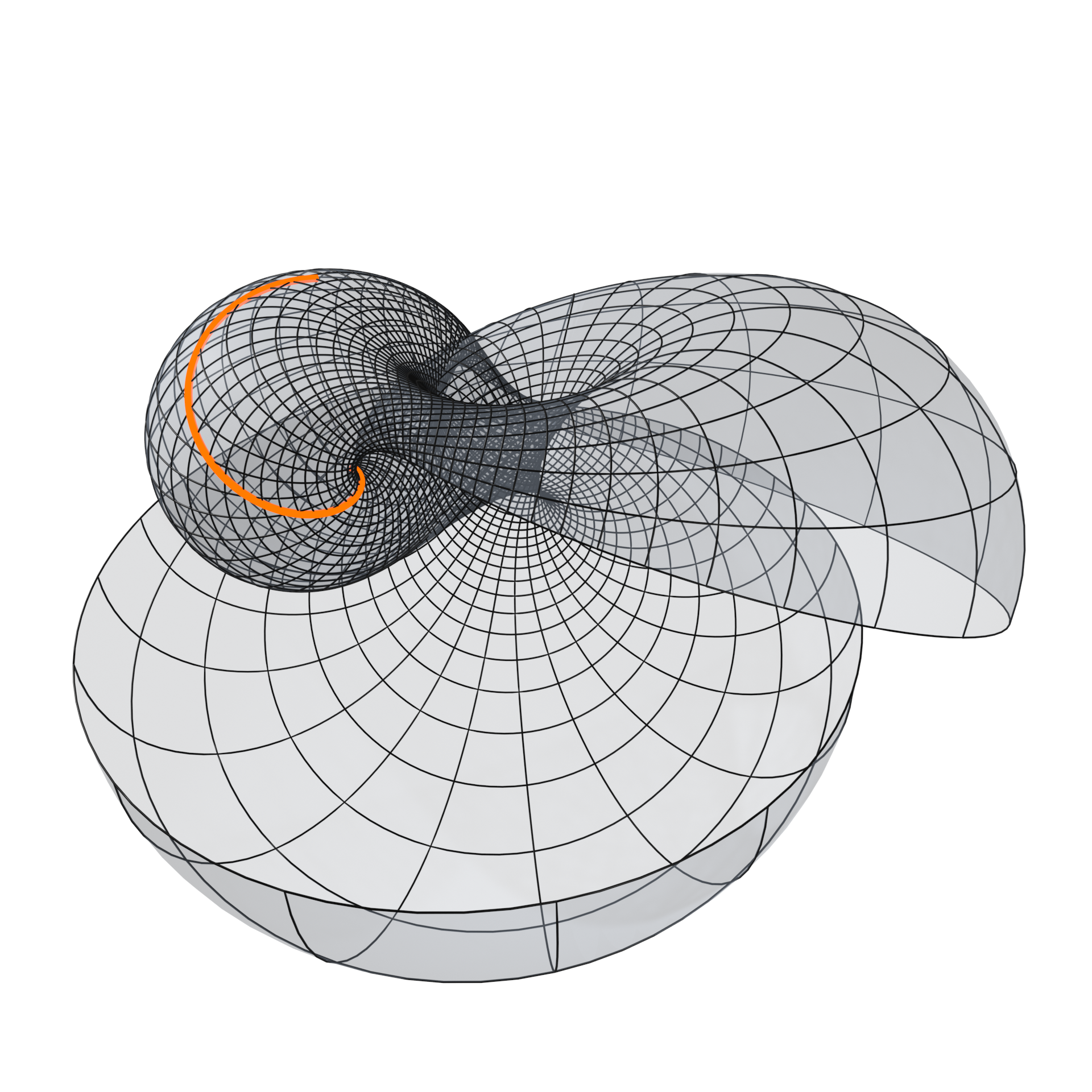}%
		\includegraphics[width=.4\textwidth]{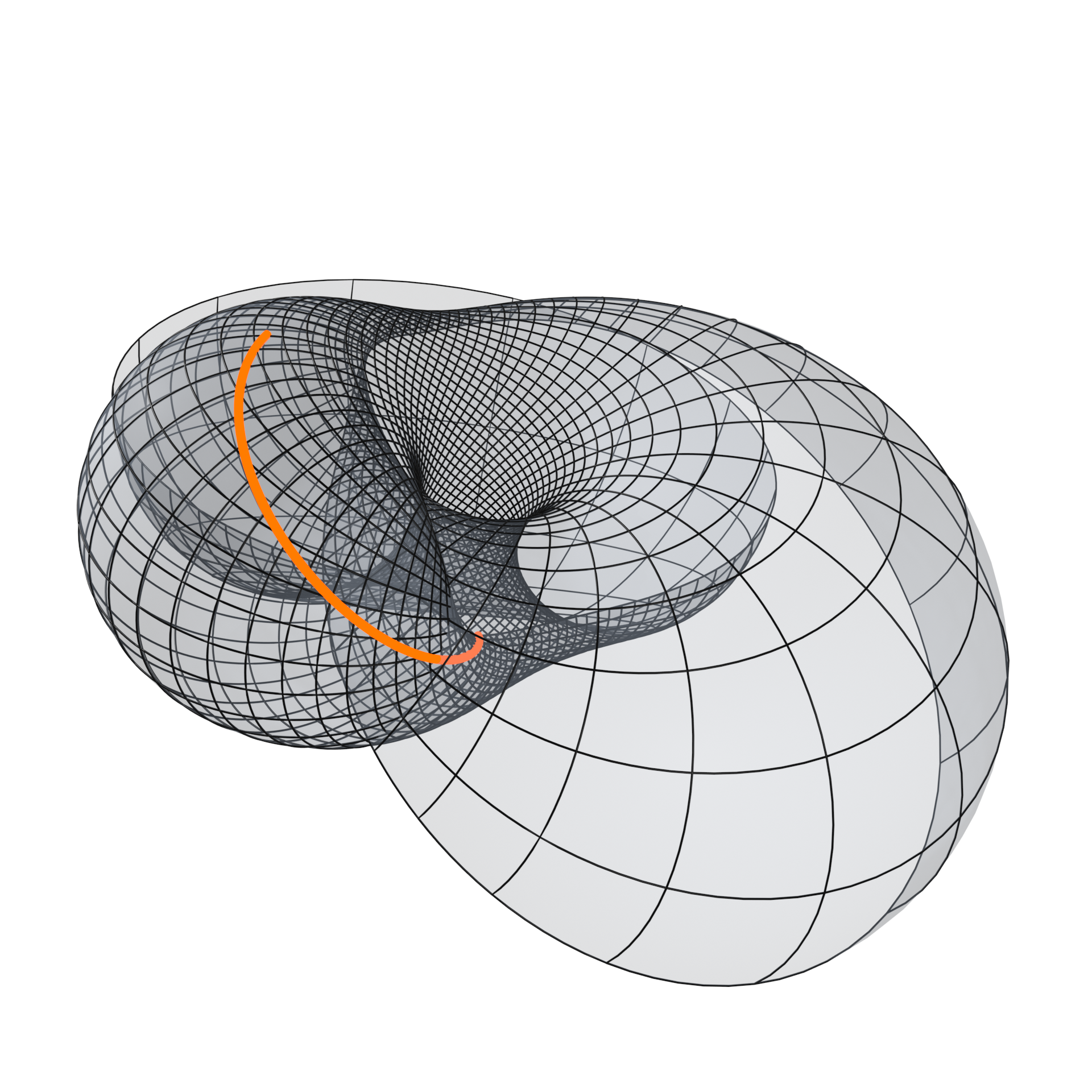}%
	\end{center}
	\caption{Fundamental pieces of an example with 3-fold rotational symmetry from parameters \eqref{eq:3FoldSphericalParameters}. The top left image is the fundamental piece of an isothermic torus with planar and spherical curvature lines. The corresponding portions of the Bonnet tori are shown on the bottom left and right. All three tori are closed by two applications of the same $120^\circ$ rotation. The Bonnet tori have an orange highlighted curve segment that shows the mean curvature preserving isometry (the curve segment on the isothermic torus is shown in blue). The top right image shows the pair of corresponding orange curves after being aligned under Euclidean motions. The Bonnet tori are not congruent as the final endpoints of corresponding curves do not agree.}
	\label{fig:3FoldFundamentalPieces}
\end{figure}

\subsection{Constructing examples}
An isothermic cylinder with planar $u$-curves and the resulting Bonnet pair cylinders are determined by a real parameter $\Imc \tau$, with $0 < \Imc \tau < \Imc \tau_0$, for the $u$-curves and a $\tau$-admissible reparametrization function $\w(v)$ for the $v$-curves. Here is a summary of the construction.

\begin{enumerate}
	\item[1.] Theorem~\ref{thm:planarIsothermicCylinderFormulas} gives the isothermic cylinder $f(u,v)$.The choice of $0 < \Imc \tau < \Imc \tau_0$ determines critical $\omega > 0$. The explicit formulas only require numerical integration of the ODE for $\Phi(v)$.
	\item[2.] Theorem~\ref{thm:BPFromPlanarFormulas} gives the Bonnet pair cylinders $f^\pm(u,v)$. The explicit formulas only require numerical integration of the ODE for $\tilde B(v)$, see Appendix~\ref{sec:explicit-bonnet-formulas}.
\end{enumerate}
For all $0 < \Imc \tau < \Imc \tau_0$ and $\w(v)$ these formulas produce cylinders. To construct examples of compact Bonnet pairs from an isothermic torus with planar and spherical curvature lines we choose $\w(v)$ as follows.

\begin{enumerate}
	\item[3.] Consider $\w(v)$ as in \eqref{eq:wOfVDoublyRhombic}, seen as a reduction of the $\w(v)$ in Corollary~\ref{cor:cylinderEllipticCurves} that characterize spherical $v$-curves. Three real parameters $\delta\neq0, s_1, s_2$ remain to be determined.
	\item[4.] Fix one of the parameters, say $s_1$. Choose a target rational angle $\theta_0 \in \pi \Q$ for the fundamental piece, and then numerically solve for the other two parameters, say $\delta$ and $s_2$, so that the rationality condition \eqref{eq:rationality_integral} and the vanishing axial \Bpart{}-part condition \eqref{eq:B_part_integral} are satisfied. Note that both conditions are expressed as elliptic integrals, which can be desingularized through a change of variables to allow for stable numerics during root finding.
\end{enumerate}

\begin{figure}[tbh!]
	\begin{center}
		\includegraphics[width=.4\textwidth]{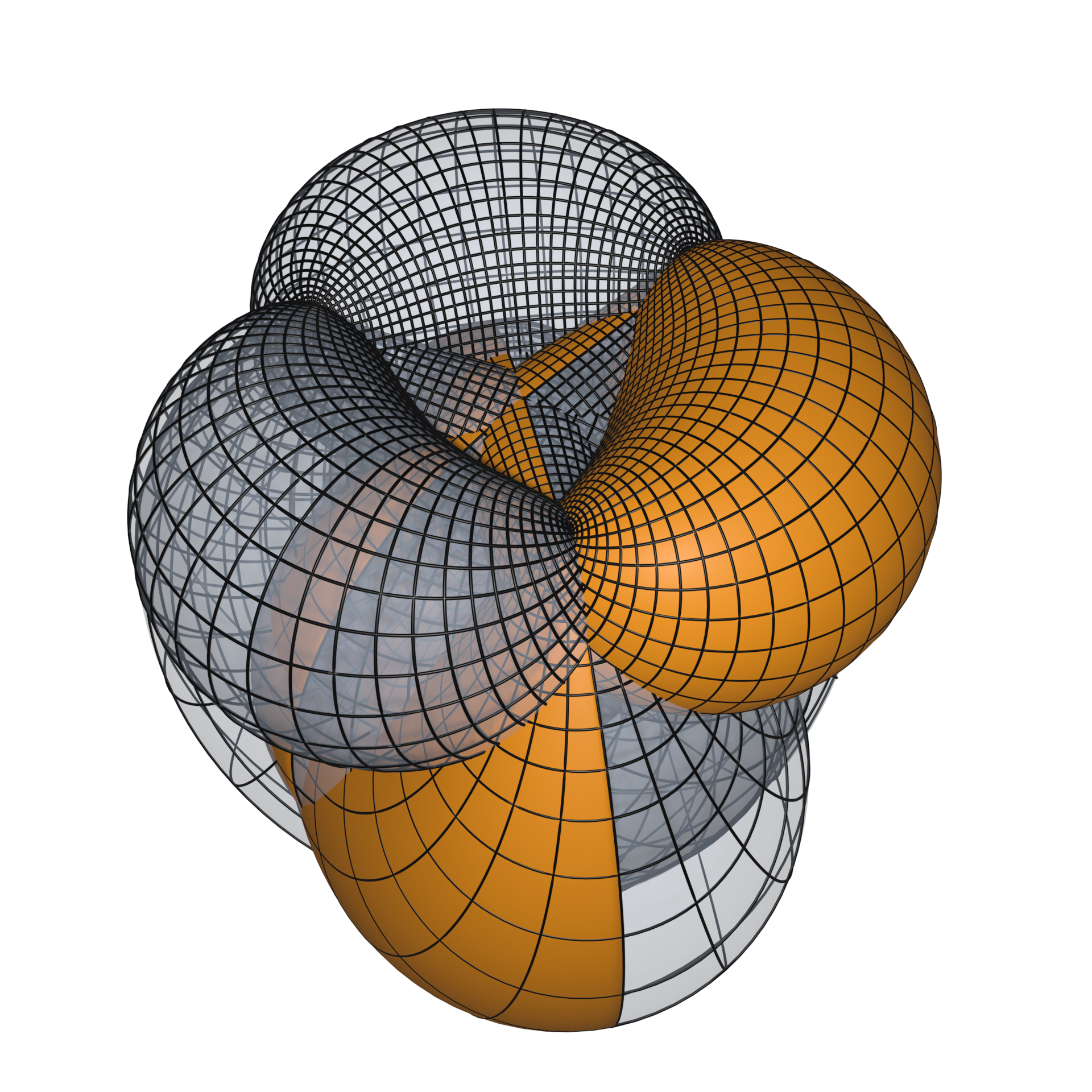}\\
		\includegraphics[width=.4\textwidth]{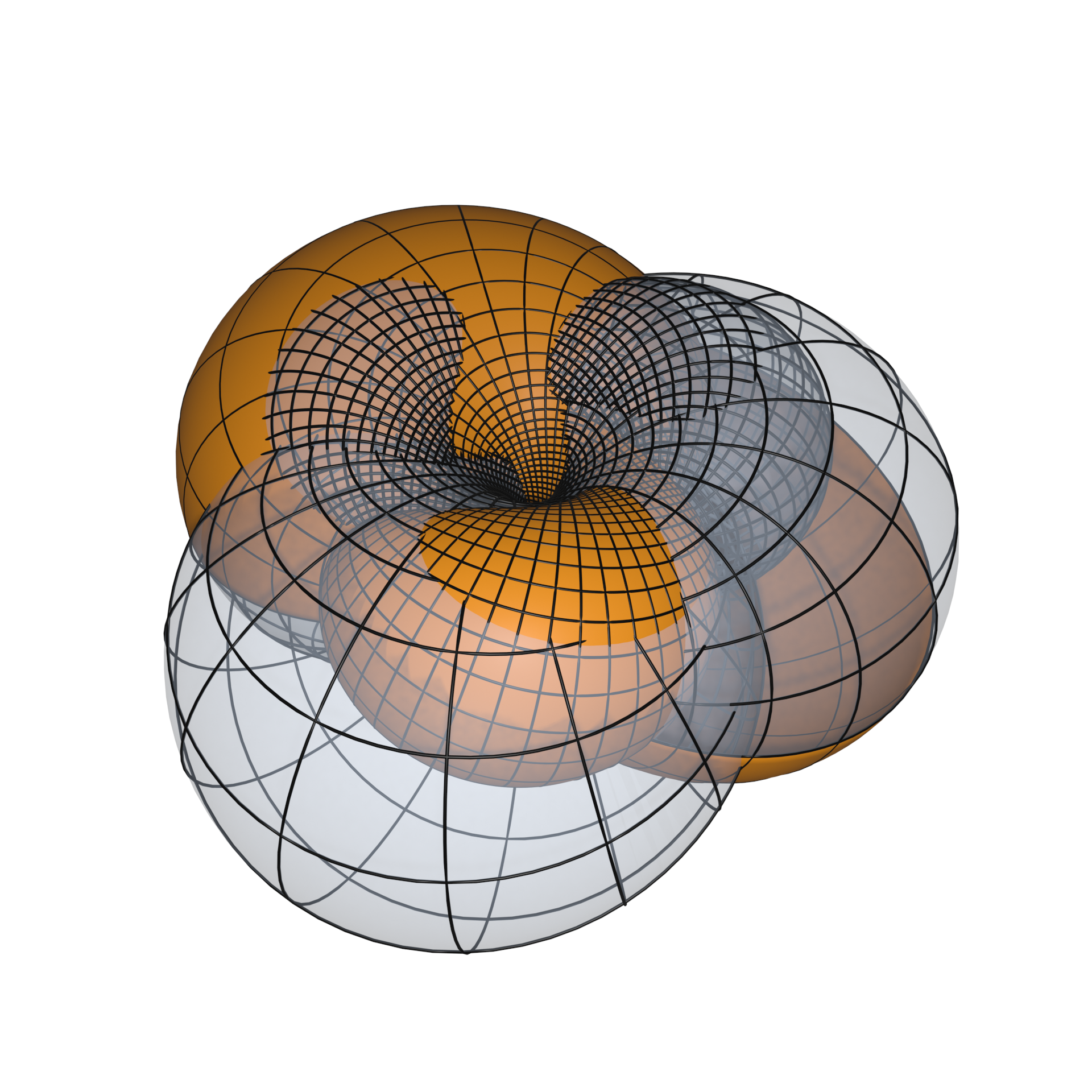}%
		\includegraphics[width=.4\textwidth]{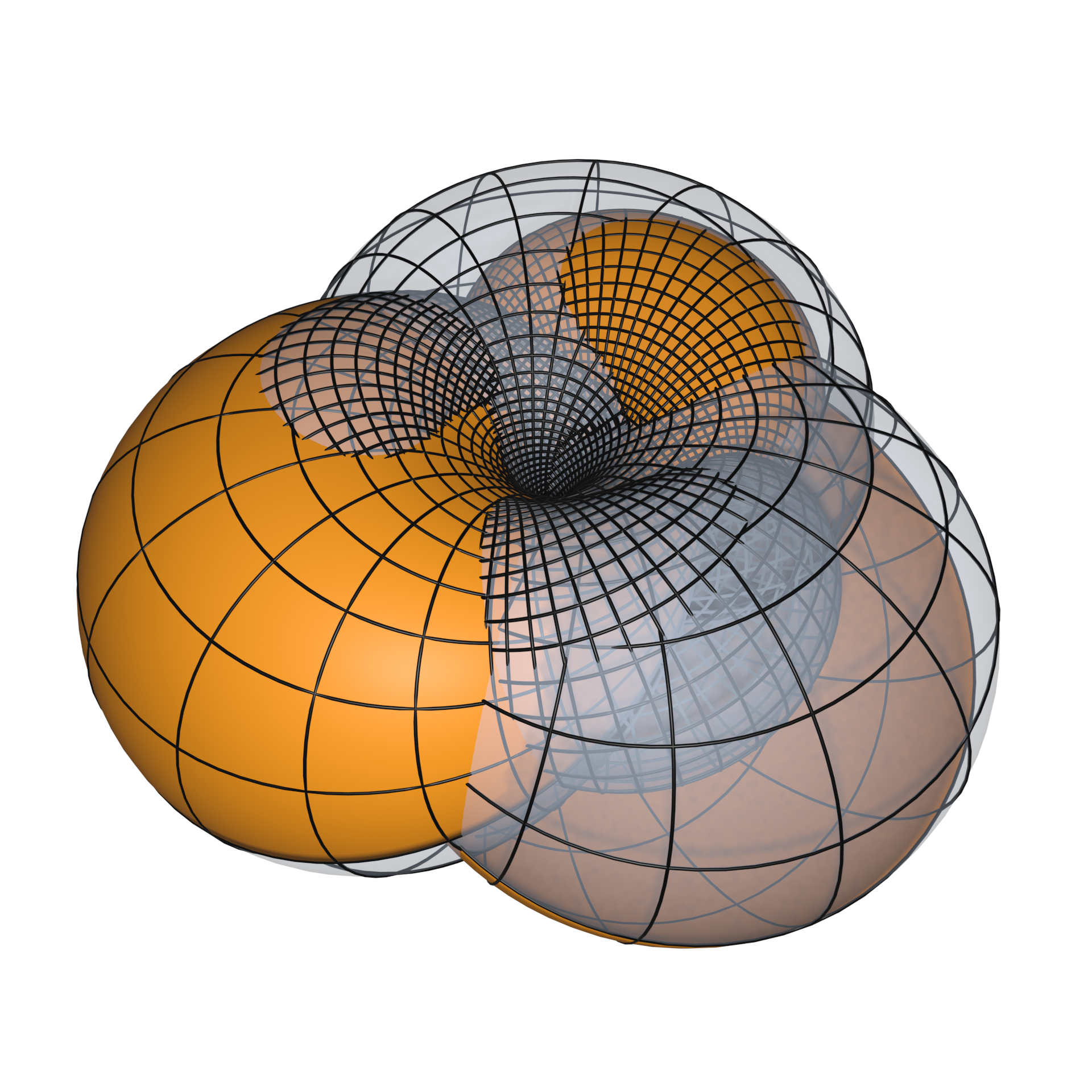}%
	\end{center}
	\caption{The example with 3-fold rotational symmetry from parameters \eqref{eq:3FoldSphericalParameters}, whose construction from a fundamental piece is shown in Figure~\ref{fig:3FoldFundamentalPieces} and highlighted here in orange. The top image is the isothermic torus with planar and spherical curvature lines, while the bottom left and right images are the corresponding Bonnet tori. All three tori share the same $120^\circ$ rotational symmetry.}
	\label{fig:3FoldFullTori}
\end{figure}

We implemented the above procedure in \emph{Mathematica}~\cite{Mathematica} to construct examples with 3-fold and 4-fold rotational symmetry. The following parameters lead to an example where the isothermic torus has a fundamental piece of angle $2\pi/3$.
\begin{equation}
	\label{eq:3FoldSphericalParameters}
	\begin{aligned}
		&\Imc \tau 	\approx 0.3205128205 \text{ with }  
		\omega 		\approx 0.3890180475,	\\
		&\delta 		\approx 1.897366596, \quad
		s_1 		\approx -3.601381552, \quad
		s_2 		\approx 0.5965202011.
	\end{aligned}
\end{equation}
The isothermic fundamental piece and corresponding portions of the Bonnet tori are shown in Figure~\ref{fig:3FoldFundamentalPieces}. The full tori are shown in Figure~\ref{fig:3FoldFullTori}. The following parameters lead to an example where the isothermic torus has a fundamental piece of angle $\pi/2$.
\begin{equation}
	\label{eq:4FoldSphericalParameters}
	\begin{aligned}
		&\Imc \tau 	\approx 0.3205128205 \text{ with }
		\omega 		\approx 0.3890180475,	\\
		&\delta 		\approx 1.61245155, \quad
		s_1 		\approx -3.13060628, \quad
		s_2 		\approx 0.5655771591.
	\end{aligned}
\end{equation}
The fundamental pieces and tori are shown in Figure~\ref{fig:4FoldFullTori}.

\begin{figure}[tbh!]
		\begin{center}
		\includegraphics[width=.35\textwidth]{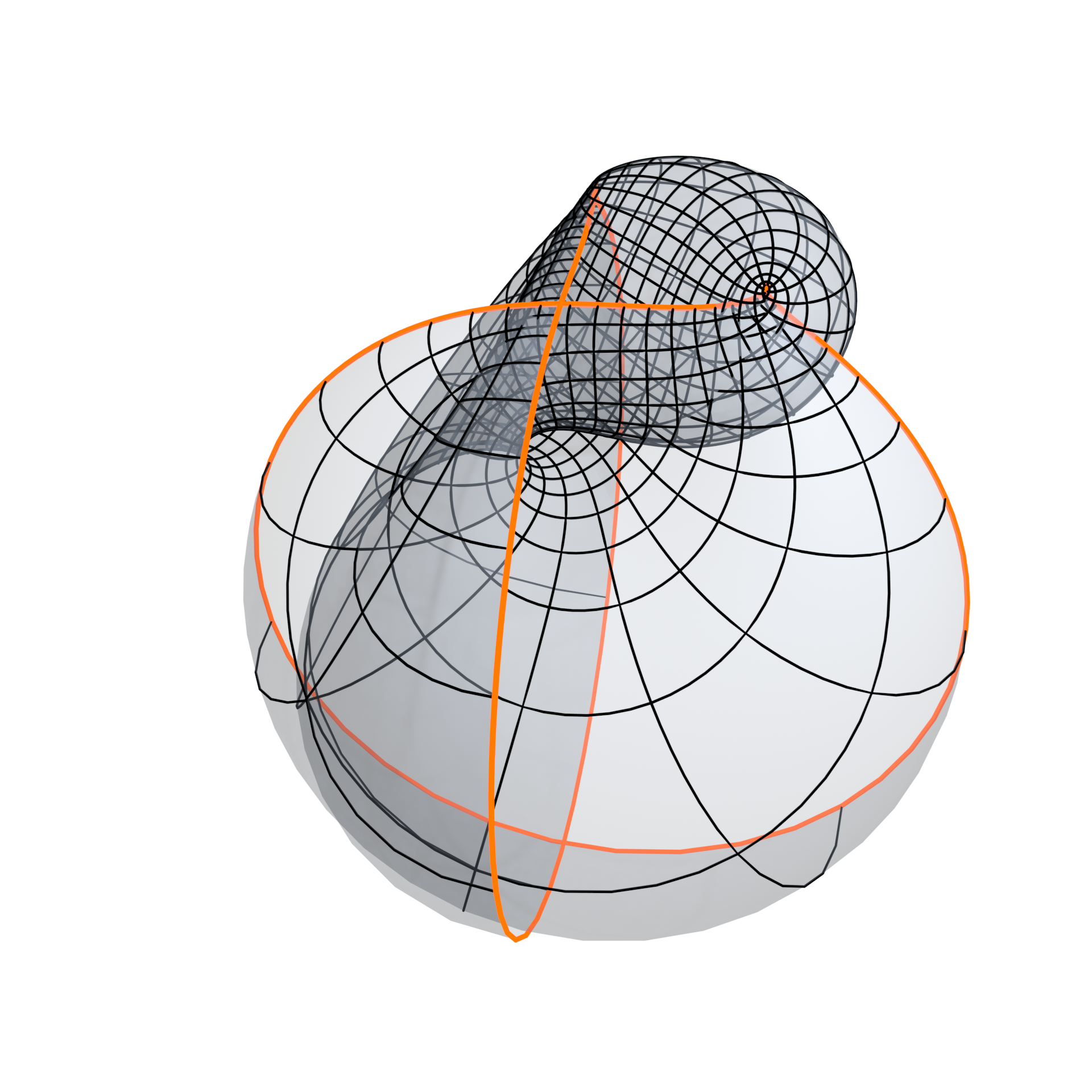}%
		\includegraphics[width=.35\textwidth]{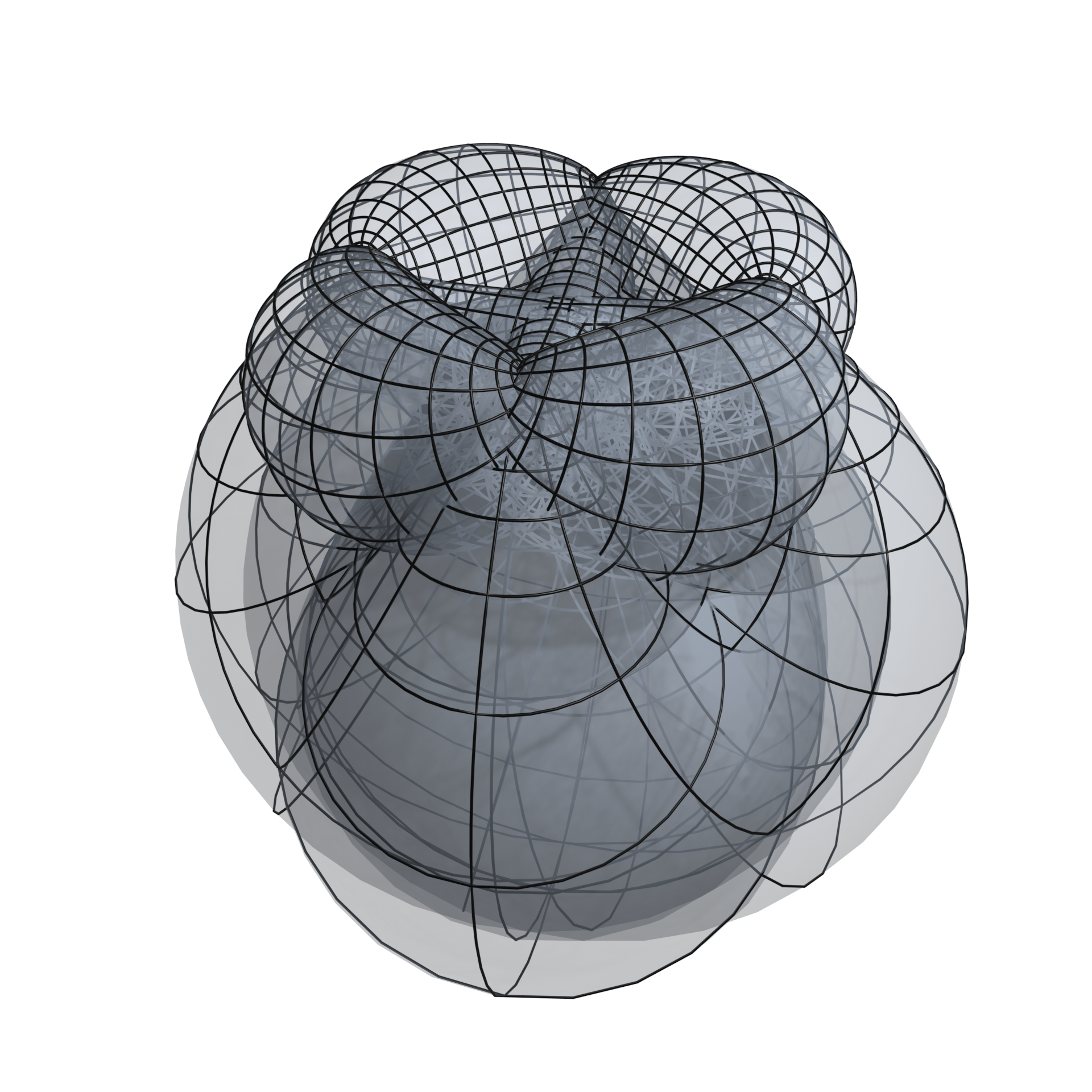}\\
		\includegraphics[width=.35\textwidth]{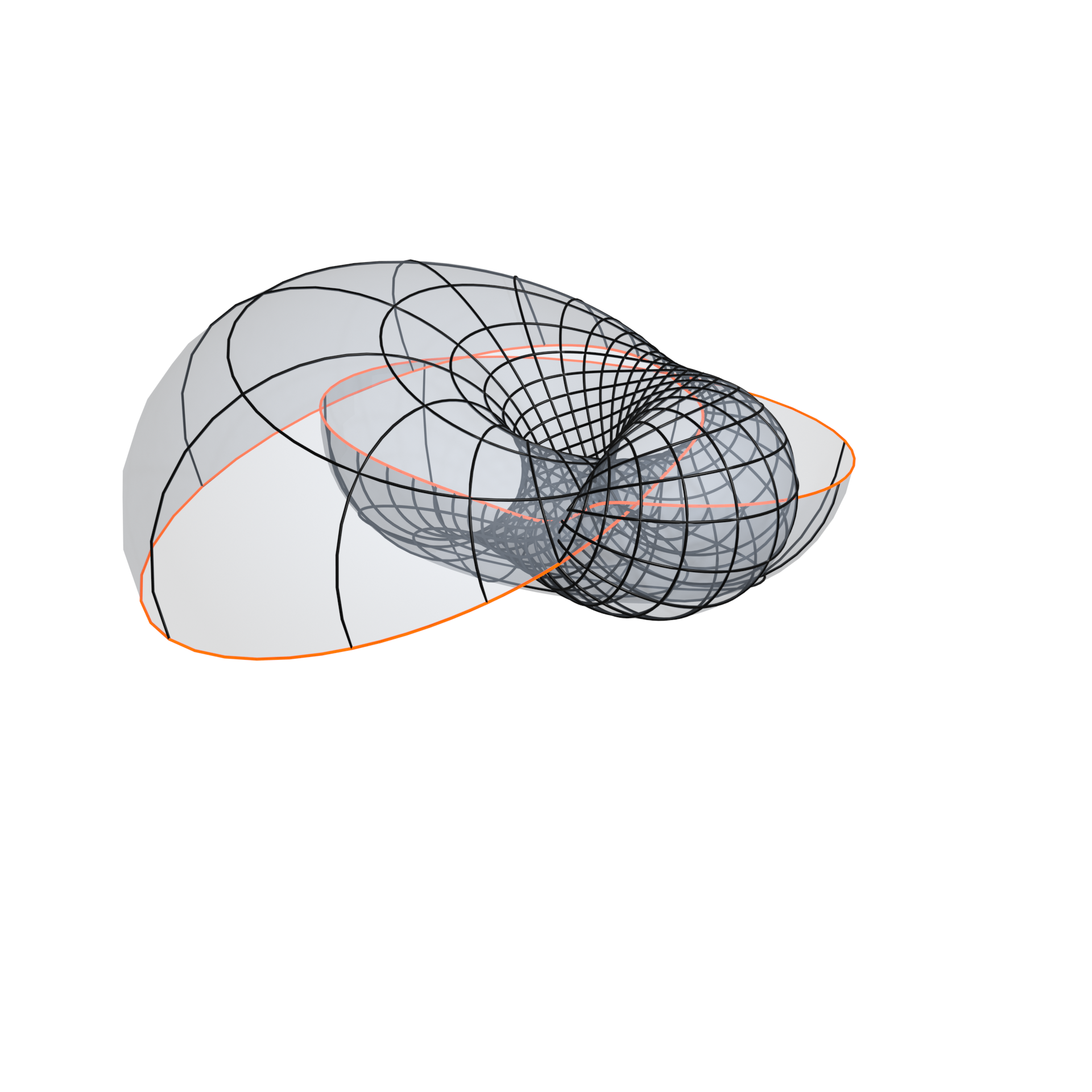}%
		\includegraphics[width=.35\textwidth]{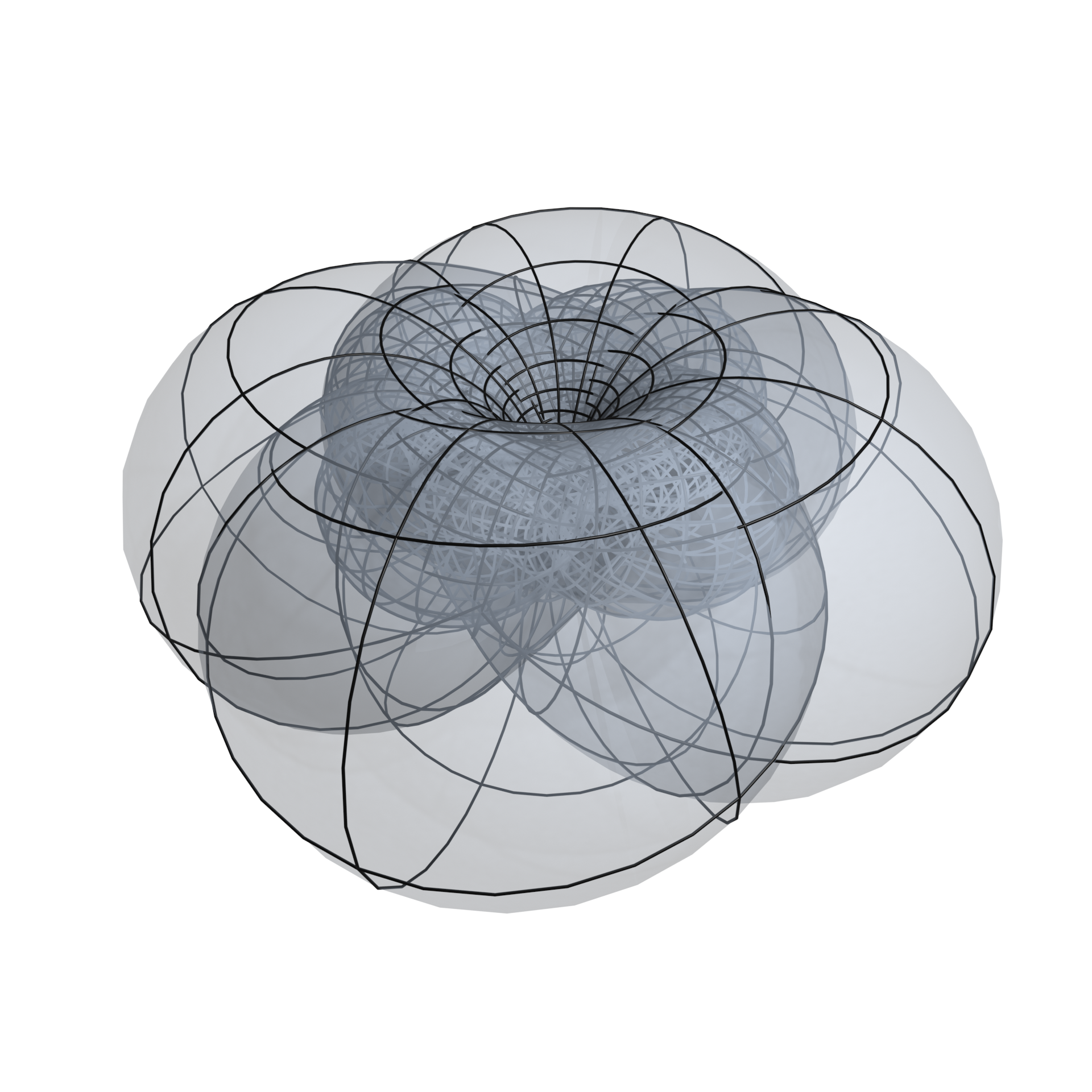} \\
		\includegraphics[width=.35\textwidth]{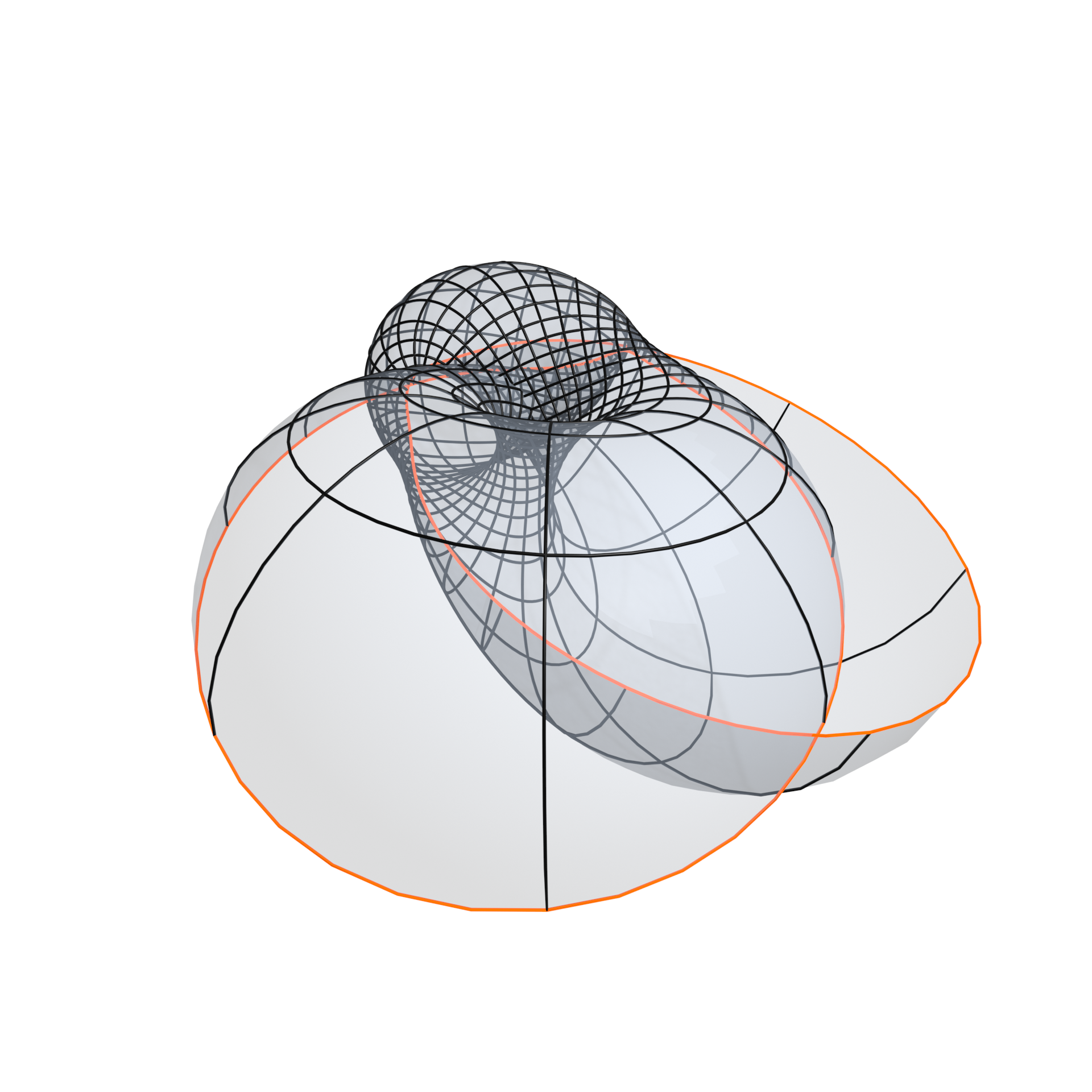}%
		\includegraphics[width=.35\textwidth]{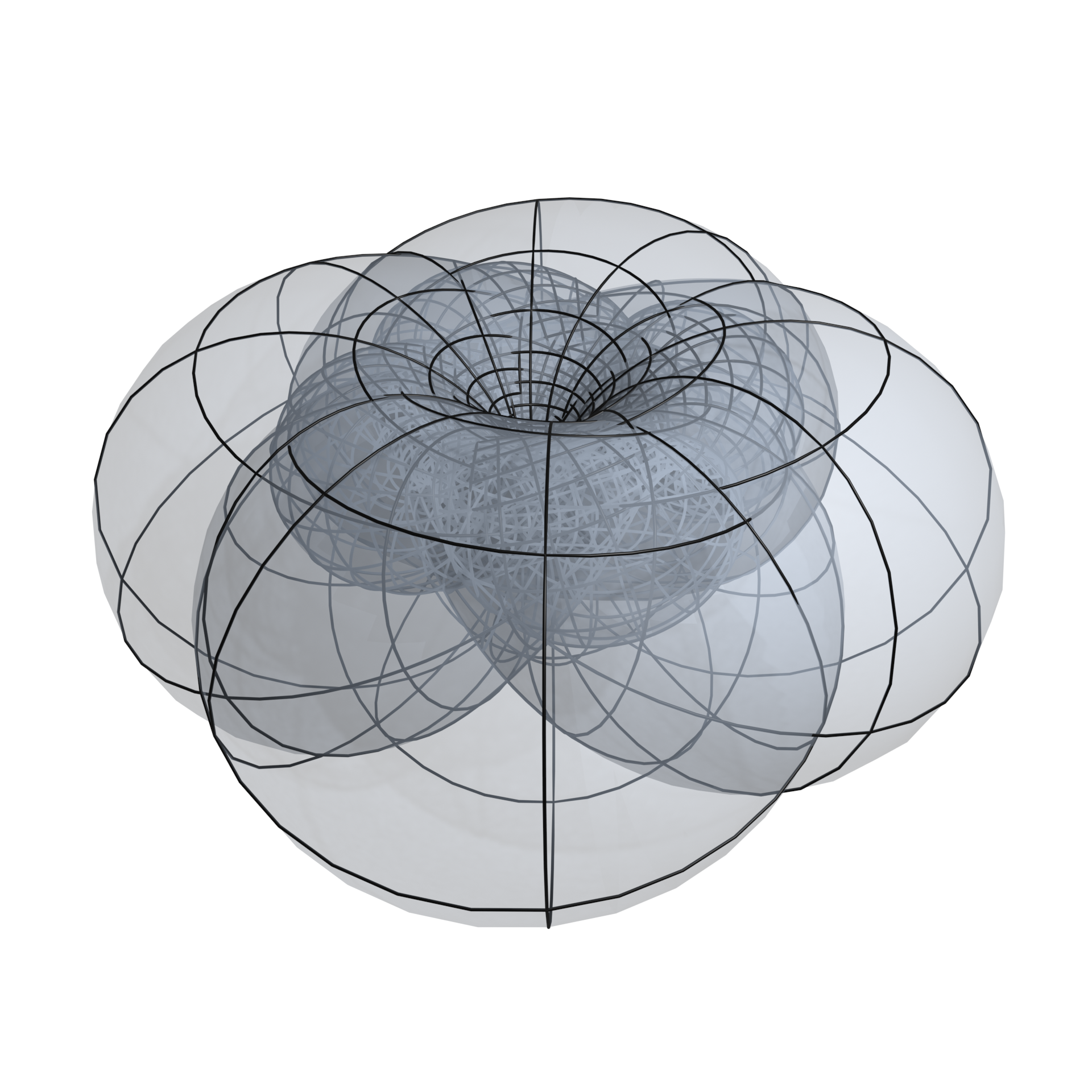}%
	\end{center}
	\caption{An example with 4-fold rotational symmetry from parameters \eqref{eq:4FoldSphericalParameters}. Top shows the fundamental piece and full isothermic torus with planar and spherical curvature lines. The corresponding Bonnet cylinders and tori are shown to the middle and bottom. All three tori share the same $90^\circ$ rotational symmetry. The congruent planar boundary curves of the fundamental piece, and their corresponding curves on the Bonnet cylinders, are shown in orange.}
	\label{fig:4FoldFullTori}
\end{figure}

\begin{remark} (One surface and its reflection)
	\label{rem:oneSurface}
	The plots of the examples reveal that the immersed Bonnet pair tori $f^+$ and $f^-$ globally resemble each other, even though they correspond via a mean curvature preserving isometry that is not a congruence. The explanation comes from the reparametrization functions $\w(v)$ that lead to spherical $v$-curves. They have the symmetry $\w(\V/2 + v) = \w(\V/2-v)$. This leads to a reflectional symmetry in the fundamental piece of the isothermic surface $f$, which implies that the corresponding Bonnet tori $f^+$ and $f^-$ are mirror images of each other. Note that the mirror symmetry mapping $f^+$ to $f^-$ is not the mean curvature preserving isometry.
\end{remark}

\subsection{Existence of examples with less symmetry. Compact Bonnet pairs with 2 different surfaces}
\label{sec:Bonnet_two_surfaces}
The Bonnet pairs constructed in Section~\ref{sec:Bonnet-one_surface} are represented by one surface and its reflection, see Remark~\ref{rem:oneSurface}. These surfaces possess an intrinsic isometry preserving the mean curvature, which is not a congruence. In this section, by a small perturbation, we construct Bonnet pairs represented by two surfaces not related by reflection. The construction has a functional freedom.

Let $\w_0(v)$ be a $\tau$-admissible reparametrization function from one of the examples constructed in Section ~\ref{sec:Bonnet-one_surface}, see Figure~\ref{fig:wAndWPrimeAndPsiPlot-3fold-example}. It is a periodic function with period ${\V}$. Both periodicity conditions of Theorem~\ref{thm:isothermicCylinderBonnetPeriodicityConditions} are satisfied by $\w_0(v)$:
\begin{eqnarray*}
& & \theta_0\in \pi {\mathbb Q},\\
& & b_0:=\langle \axisDir_0, \int_0^{\V} e^{-h(\omega, \w_0(v))} \Phi_0^{-1}(v)\hat{n}(\omega, \w_0(v))\Phi_0 (v)dv \rangle =0.
\end{eqnarray*}
Here the axis $\axisDir_0$ and the dihedral angle $\theta_0$ are determined by the monodromy (\ref{eq:monodromy})
$$
M_0=\Phi_0^{-1}(0)\Phi_0({\cal V})=\cos \frac{\theta_0}{2}+\sin \frac{\theta_0}{2} \axisDir_0,
$$
and $\hat{n}$ is determined by \eqref{eq:nIsothermicPlanarClosed}.

The frame $\Phi_0$ satisfies 
$$
\frac{d}{dv}\Phi_0=\xi (\w_0(v)) \Phi_0,
$$
with $\xi(\w_0(v)))$ given by the explicit formula (\ref{eq:phiPrimePhiInverse}).

Note that $\xi$ and $\hat{n}$ depend on $\w$ and $\w'$.
\begin{figure}
	\centering
	\includegraphics[width=\linewidth]{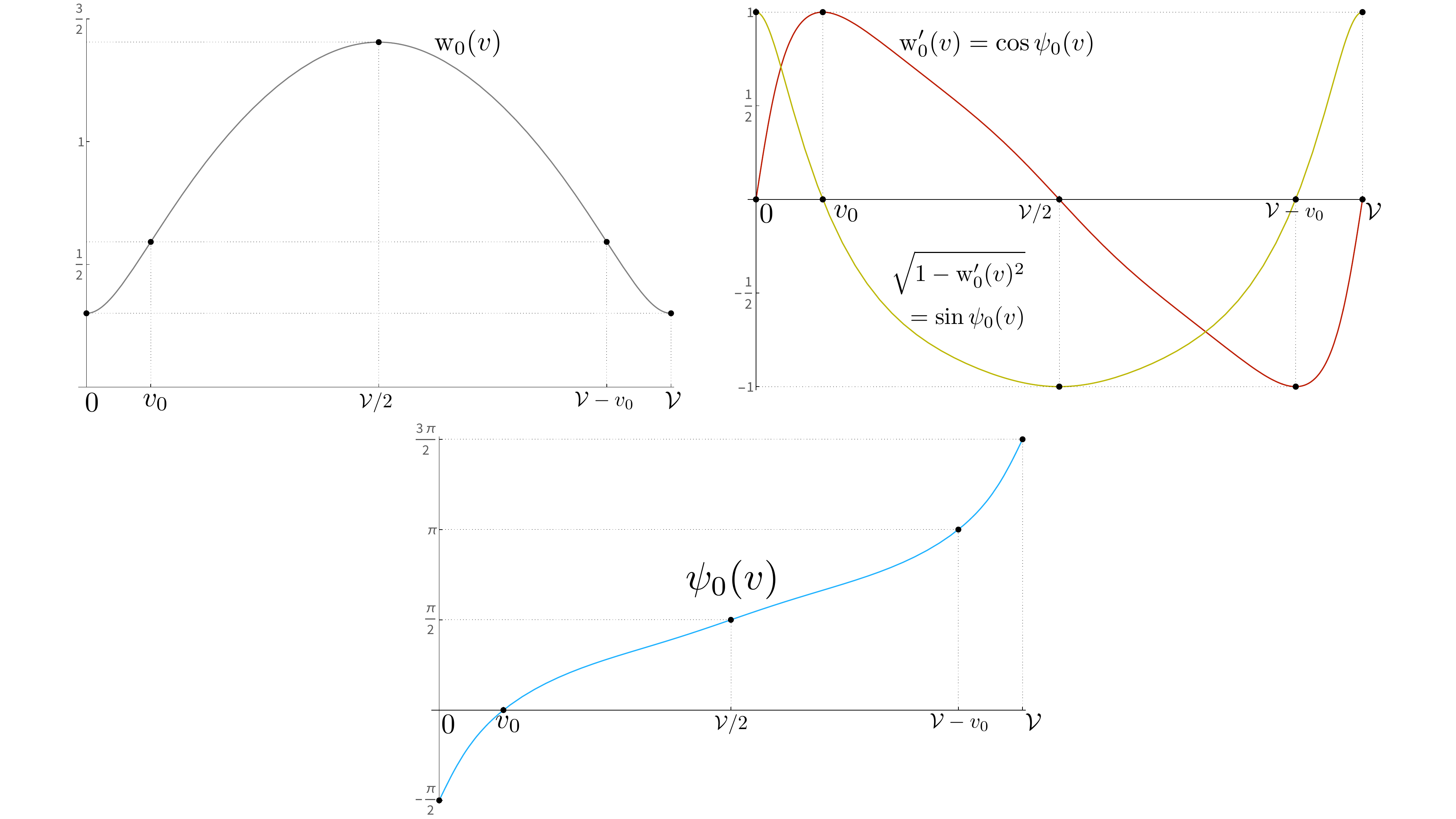}
	\caption{Reparametrization function from 3-fold spherical example in Figures~\ref{fig:3FoldFundamentalPieces} and \ref{fig:3FoldFullTori}. Top left: $\w_0(v)$, Top right: $\w_0'(v) = \cos \psi_0(v), \sqrt{1-\w_0'(v)^2} = \sin \psi_0(v)$, Bottom: $\psi_0(v).$}
	\label{fig:wAndWPrimeAndPsiPlot-3fold-example}
\end{figure}

We consider a $\tau$-admissible small pertubation $\w(v)$ of $\w_0(v)$ 
that is periodic with the same period. As suggested in Remark~\ref{re: psi}, we define $\w(v)$ in terms of a real analytic function $\psi: \S^1\to \R$ by
\begin{equation}
\label{eq:psi-reparametrization}
\w'(v)=\cos\psi(v),\quad \sqrt{1-\w'(v)^2}=\sin\psi(v),
\end{equation}

Thus in the following $\psi_0(v)$ is the function corresponding to $\w_0(v)$, see Figure~\ref{fig:wAndWPrimeAndPsiPlot-3fold-example}, and 
\begin{equation}
\label{eq:phi-perturbation}
\psi(v):=\psi_0(v)+\epsilon\phi(v)
\end{equation}
is its small perturbation. The properties of $\w_0(v)$ in the proof of Lemma~\ref{lem:globalSphericalWOfv} show that $\psi_0(v)$ is a monotonic real analytic function on the interval  $[0, \mathcal{V}]$ with $\psi_0(v+ \mathcal{V})=\psi_0(v)+2\pi$. Let $v_0$ and $\mathcal{V}-v_0$ be the two zeros of $\sqrt{1-\w_0'(v)^2}$ on $[0, \mathcal{V}]$, i.e. $\psi_0(v_0)=0, \psi_0(\mathcal{V}-v_0)=\pi$. We keep these zeros for $\sqrt{1-\w'(v)^2}$ and introduce the perturbation space
\begin{equation}
\label{eq:perturbation-space}
\phi\in C_0^\omega [0,\mathcal{V}]:=
\{ \phi\  \text{real analytic},\  \phi(v+\mathcal{V})=\phi(v) \mid \phi(v_0)=\phi(\mathcal{V}-v_0)=0 \}.
\end{equation}
For $\epsilon$ small enough, $\psi(v)$ defined by (\ref{eq:phi-perturbation}) determines via (\ref{eq:psi-reparametrization}) a $\tau$-admissible reparametrization function
$$
\w(v)=\int_0^v \cos\psi(t) dt + \w_0(0),
$$
provided 
$
\int_0^\mathcal{V} \cos\psi(v) dv=0.
$
The last condition implies the periodicity of $\w(v)$. Its derivative at $\w_0(v)$ is given by 
$$
\frac{\partial \w}{\partial \epsilon}_{| \epsilon =0}= - \int_0^v \sin \psi_0(t) \phi(t) dt.
$$

Thus one obtains  Bonnet tori if the following three conditions are satisfied:
\begin{eqnarray}
\theta & \in & \pi \Q, \label{eq: per1}\\
b & :=&\langle \axisDir, \int_0^{\V} e^{-h(\omega, \w(v))} \Phi^{-1}(v)\hat{n}(\omega, \w(v))\Phi (v)dv \rangle =0,
\label{eq: per2}\\
c & := & \int_0^\mathcal{V} \cos\psi(v) dv=0.  \label{eq: per3}
\end{eqnarray}
We compute the derivatives of these functions at $\epsilon=0$ and apply the implicit function theorem. For condition \ref{eq: per3} one obviously has
$$
\frac{\partial c}{\partial \epsilon}_{| \epsilon =0}= - \int_0^{\mathcal{V}} \sin \psi_0(t) \phi(t) dt.
$$

Formulas for the frame and for the normal read as follows:
\begin{eqnarray*}
\xi(\w(v))& =& \sin \psi(v)W_1(\w(v))\qk,\\
\hat{n}(\omega, \w(v)) & = & \qi \cos\psi(v)- \sin\psi(v) e^{\ci \sigma(\omega,\w(v))}\qk .
\end{eqnarray*}

For  $\epsilon\to 0$ the frame satisfies
\begin{equation}
\label{eq:frame_delta}
\begin{aligned}
& \frac{d}{dv}\Phi=(\xi(\w_0)+\epsilon g(v)+o(\epsilon))\Phi,\  \text{with}\ \\
& g(v)=\frac{\partial}{\partial \epsilon} \xi (\w(v))_{|\epsilon=0}=
\cos\psi_0 (v) W_1(\w_0)\qk \phi (v) -
\sin \psi_0(v) \frac{\partial W_1}{\partial \w}_{| \w=\w_0} \qk \left( \int_0^v \sin\psi_0(t) \phi(t) dt    \right).
\end{aligned}
\end{equation}
Substituting $\Phi=\Phi_0 (I+ \epsilon R + o(\epsilon))$ we obtain $\frac{d}{dv} R=\Phi_0^{-1} g \Phi_0$. The normalization $R(0)=0$ yields
$$
R(v)=\int_0^v \Phi_0^{-1}(t) g(t) \Phi_0(t) dt.
$$
For the monodromy of $\Phi$ this implies $M=M_0(I+\epsilon G+ o(\epsilon))$ with
\begin{equation}
\label{eq: G}
 G=\int_0^{\cal V} \Phi_0^{-1}(v) g(v) \Phi_0(v) dv. 
\end{equation}

Differentiating the monodromy by $\epsilon$ at $\epsilon=0$ we obtain
$$
-\frac{1}{2}\sin \frac{\theta_0}{2} \frac{\partial \theta_0}{\partial \epsilon}+
\frac{1}{2}\cos \frac{\theta_0}{2}  \frac{\partial \theta_0}{\partial \epsilon} \axisDir_0+\sin \frac{\theta_0}{2} \frac{\partial \axisDir}{\partial \epsilon}=
(\cos \frac{\theta_0}{2}+\sin \frac{\theta_0}{2} \axisDir_0)G,
$$
which is equivalent to the following expressions for the derivatives of the dihedral angle and the axis:
\begin{eqnarray}
\frac{\partial \theta}{\partial \epsilon}_{|\epsilon=0}&=& 2\langle \axisDir_0, G\rangle ,
\label{eq: theta_der}\\
\frac{\partial \axisDir}{\partial \epsilon}_{|\epsilon=0}&=& \cot \frac{\theta_0}{2} \left(G-\langle \axisDir_0, G\rangle \axisDir_0  \right )+\axisDir_0\times G.
\label{eq: axis_der}
\end{eqnarray}

Integration of (\ref{eq: G}) by parts gives
\begin{eqnarray}
G=& \int_0^{\mathcal{V}} \phi \cos \psi_0 \Phi_0^{-1} W_1 (\w_0) \qk \Phi_0 dv+
\int_0^{\mathcal{V}} \phi \sin \psi_0 \left( \int_0^v \Phi_0^{-1}  \frac{\partial W_1}{\partial \w}_{| \w=\w_0} \qk \Phi_0 \sin \psi_0 (t) dt\right) dv -
\nonumber\\
& \int_0^{\mathcal{V}} \phi \sin \psi_0 dv 
\int_0^{\mathcal{V}} \Phi_0^{-1}  \frac{\partial W_1}{\partial \w}_{| \w=\w_0} \qk \Phi_0 \sin \psi_0 (v)  dv.
\label{eq: GIntegratedByParts}
\end{eqnarray}
This function is of the form $\int_0^{\mathcal V} \hat{G}(v)\phi(v)dv$, and so is
$$
\frac{\partial \theta}{\partial \epsilon}_{| \epsilon =0}= \int_0^{\mathcal{V}} \hat{a}(v)\phi(v) dv.
$$

By a similar but  more involved computation using (\ref{eq: axis_der}) one proves the same fact for the periodicity condition (\ref{eq: per2}):
$$
\frac{\partial b}{\partial \epsilon}_{| \epsilon =0}= \int_0^{\mathcal{V}} \hat{b}(v)\phi(v) dv.
$$

Indeed, 
$$
\frac{\partial b}{\partial \epsilon}_{|\epsilon=0}=
\langle \frac{\partial \axisDir}{\partial \epsilon}, \int_0^{\cal V} e^{-h_0}\Phi_0^{-1}\hat{n}_0\Phi_0 dv\rangle +
\langle \axisDir_0, \int_0^{\cal V}  \frac{\partial}{\partial \epsilon}(e^{-h}\Phi^{-1}\hat{n}\Phi) dv\rangle.
$$
Due to (\ref{eq: axis_der}) the first term is of required form. For the second term we have
\begin{align}
\label{eq: technical1}
 \frac{\partial}{\partial \epsilon}(e^{-h}\Phi^{-1}\hat{n}\Phi)=e^{-h_0}\Phi_0^{-1} \frac{\partial \hat{n}_0}{\partial \epsilon}\Phi_0+
\left( - \frac{\partial h_0}{\partial \w}  e^{-h_0}\Phi_0^{-1}\hat{n}_0\Phi_0+
e^{-h_0}\Phi_0^{-1}\frac{\partial \hat{n}_0}{\partial \w}\Phi_0
\right) \frac{\partial \w}{\partial \epsilon} +
e^{-h_0}[\Phi_0^{-1}\hat{n}_0 \Phi_0, R],
\end{align}
where
\begin{eqnarray*}
\frac{\partial \hat{n}_0}{\partial \epsilon}=(-\qi\sin\psi_0-\cos\psi_0 e^{i\sigma_0}\qk)\phi, \quad
\frac{\partial \hat{n}_0}{\partial \w}=-i\sin\psi_0 \frac{\partial \sigma_0}{\partial \w}e^{i\sigma_0}\qk,
\end{eqnarray*}
and $[,]$ is the commutator. The first term in this formula is of required form. The second one can be brought to this form by integration by parts similar to (\ref{eq: G}). Integrating by parts the last term one obtains
\begin{eqnarray*}
&\int_0^{\V}e^{-h_0(v)}
\left[
\Phi_0^{-1}(v)\hat{n}_0 (v) \Phi_0(v), \int_0^v \Phi_0^{-1}(t)g(t) \Phi_0(t)
\right] dv=\\
&-\int_0^{\V}
\left[
\int_0^v e{-h_0}(t)\Phi_0^{-1}(t)\hat{n}_0(t)\Phi_0(t) dt, \Phi_0^{-1}(v) g (v) \Phi_0(v)
\right] dv+\\
&\int_0^{\V}e^{-h_0}\Phi_0^{-1}(v)\hat{n}_0 (v) \Phi_0 (v)dv \int_0^{\V} \Phi_0^{-1} (v) g (v) \Phi_0(v) dv.
\end{eqnarray*}
One more integration by parts brings this term to the required form.

Now we apply the implicit function theorem to find perturbations preserving the periodicity conditions. The derivatives of all three periodicity conditions are of the form $\int_0^{\V} \phi (v) \hat{c}(v)dv =0 $ with explicit functions $\hat{c}(v)$.

Since the functions $\hat{a}(v), \hat{b}(v)$ and $\sin\psi_0(v)$ are linearly independent, there exist functions $\phi_1, \phi_2, \phi_3$ with 
\begin{eqnarray*}
\frac{\partial}{\partial \epsilon}(\theta, b, c)_{|\epsilon=0, \phi=\phi_1} = (1,0,0),\quad
\frac{\partial}{\partial \epsilon}(\theta, b, c)_{|\epsilon=0, \phi=\phi_2}  = (0,1,0),\quad
\frac{\partial}{\partial \epsilon}(\theta, b, c)_{|\epsilon=0, \phi=\phi_3}  = (0,0,1).
\end{eqnarray*} 

Now choose 
$
\epsilon \phi=\alpha_1 \phi_1+\alpha_2 \phi_2 + \alpha_3 \phi_3 + \epsilon \chi
$
with an arbitrary $\chi\in C^\omega_0[0, {\cal V}]$. For the map $F(\alpha_1,\alpha_2,\alpha_3, \epsilon)=(\theta, b, c)$ and its Jacobian with respect to $\alpha = (\alpha_1, \alpha_2, \alpha_3)$ we have  $$
F(0,0,0)=(\theta_0,0,0), \quad \frac{\partial F}{\partial \alpha}_{|\epsilon=0}=\rm{Id}.
$$
 By the implicit function theorem, we obtain that for small $\epsilon$ there exist analytic functions $\alpha_1(\epsilon), \alpha_2(\epsilon), \alpha_3(\epsilon)$ with $F(\alpha_1(\epsilon),\alpha_2(\epsilon),\alpha_3(\epsilon),\epsilon)=(\theta_0,0)$. The reparametrization functions $\w=\w_0+\alpha_1(\epsilon)\phi_1+\alpha_2(\epsilon)\phi_2+ \alpha_3(\epsilon)\phi_3+\epsilon\chi$ give us a family of Bonnet pairs depending on a functional parameter. For generic functions $\w(v)$ (that do not possess a reflection symmetry as noted in Remark~\ref{rem:oneSurface}) we obtain two Bonnet tori not related by a reflection. We have therefore proven the following theorem.

\begin{theorem}
\label{thm:Bonnet_pair_two}
There exist Bonnet pairs with analytic tori $f^+$ and $f^-$ not related by an isometry of the ambient space $\R^3$.
\end{theorem}

\begin{corollary}
	The statements of Main Theorems ~\ref{thm:main1} and ~\ref{thm:main2}.
\end{corollary}

\begin{figure}[tbh!]
	\begin{center}
		\includegraphics[width=0.5\textwidth]{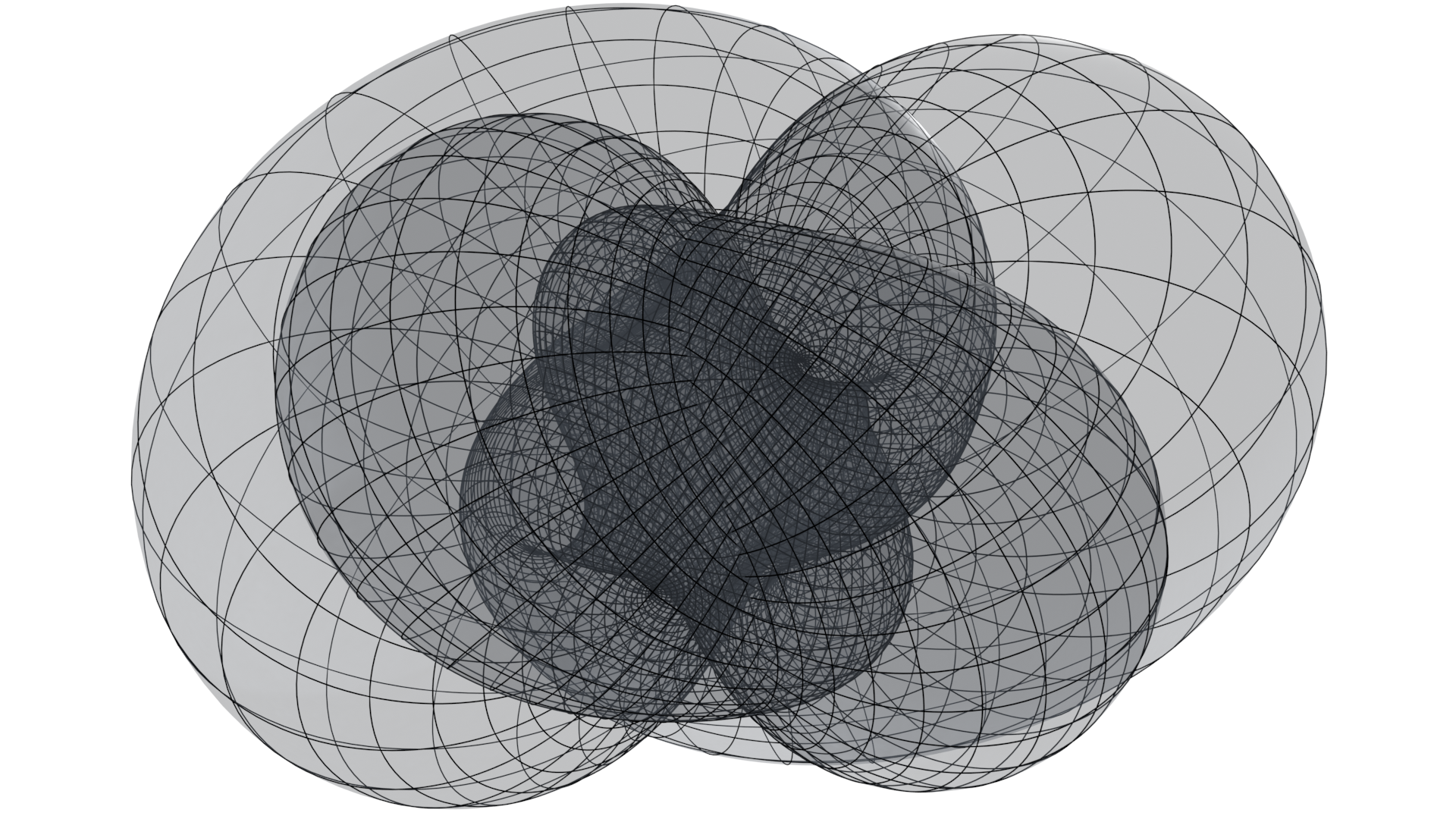}%
	\end{center}
	\caption{The isothermic torus with $180^\circ$ rotational symmetry that generates the Bonnet tori from Figure~\ref{fig:teaserNonsphericalPlusMinus}. The fundamental piece and Bonnet cylinder portions are shown in Figure~\ref{fig:periodicity-conditions-outline}. The resulting Bonnet pair tori are not related by an ambient isometry of $\R^3$.}
	\label{fig:Bonnet_pair_two}
\end{figure}

Figure~\ref{fig:Bonnet_pair_two} shows a numerical example of an isothermic torus that generates a Bonnet pair with two different (i.e. non-congruent and not related by a reflection) tori. The corresponding Bonnet pair are in Figure~\ref{fig:teaserNonsphericalPlusMinus}. This example was constructed as follows. First we fixed the elliptic curve parameter $\Imc \tau$ so that 
\begin{align} \label{eq:imtauApprox}
	\Imc \tau 	\approx 0.3205128205 \text{ with }
	\omega 		\approx 0.3890180475.
\end{align}
Then we considered the following three parameter $A, B, C \in \R$ set of reparame\-trization functions:
\begin{align}
	\w(v) = \rm C + \frac{\rm A \sin (v)}{\pi }-\frac{\rm A \cos (v)}{\pi ^2}-\frac{\rm B \sin (2 v)}{2 \pi }+\frac{\rm B \cos (2 v)}{4 \pi ^2}.
\end{align}
Within this set we numerically solved for an isothermic cylinder satisfying the rationality and vanishing axial $\Bpart{}$ part conditions from Theorem~\ref{thm:isothermicCylinderBonnetPeriodicityConditions}, with 2-fold symmetry $\angleFP_0 = \pi$. The parameters are
\begin{align}
	\rm A = 1.44531765156, \quad \rm B = 1.33527652772, \quad \rm C = 1.05005399924.
\end{align}


\section{Discrete Bonnet pairs}
\label{sec:discrete-theory}
Discrete Differential Geometry studies analogs of the smooth theory that preserve some underlying structure, like the integrability of a compatibility condition or the geometric invariance under a certain transformation group. These ideas have broad application from surface theory and integrable systems to architectural geometry and computer graphics ~\cite{ddgAMS2008,POTTMANN2015145,DDGGlimpse}.

Importantly, discrete properties are preserved at every finite resolution, as opposed to only in a continuum limit. This viewpoint is exemplified by the computational discovery of discrete compact Bonnet pairs shown in Figure~\ref{fig:discretePlanarExample}, which initiated our work on the present article. To describe the setup and experiments, we briefly introduce the necessary ideas.

\begin{figure}
	\begin{center}
		\includegraphics[width=0.48\textwidth]{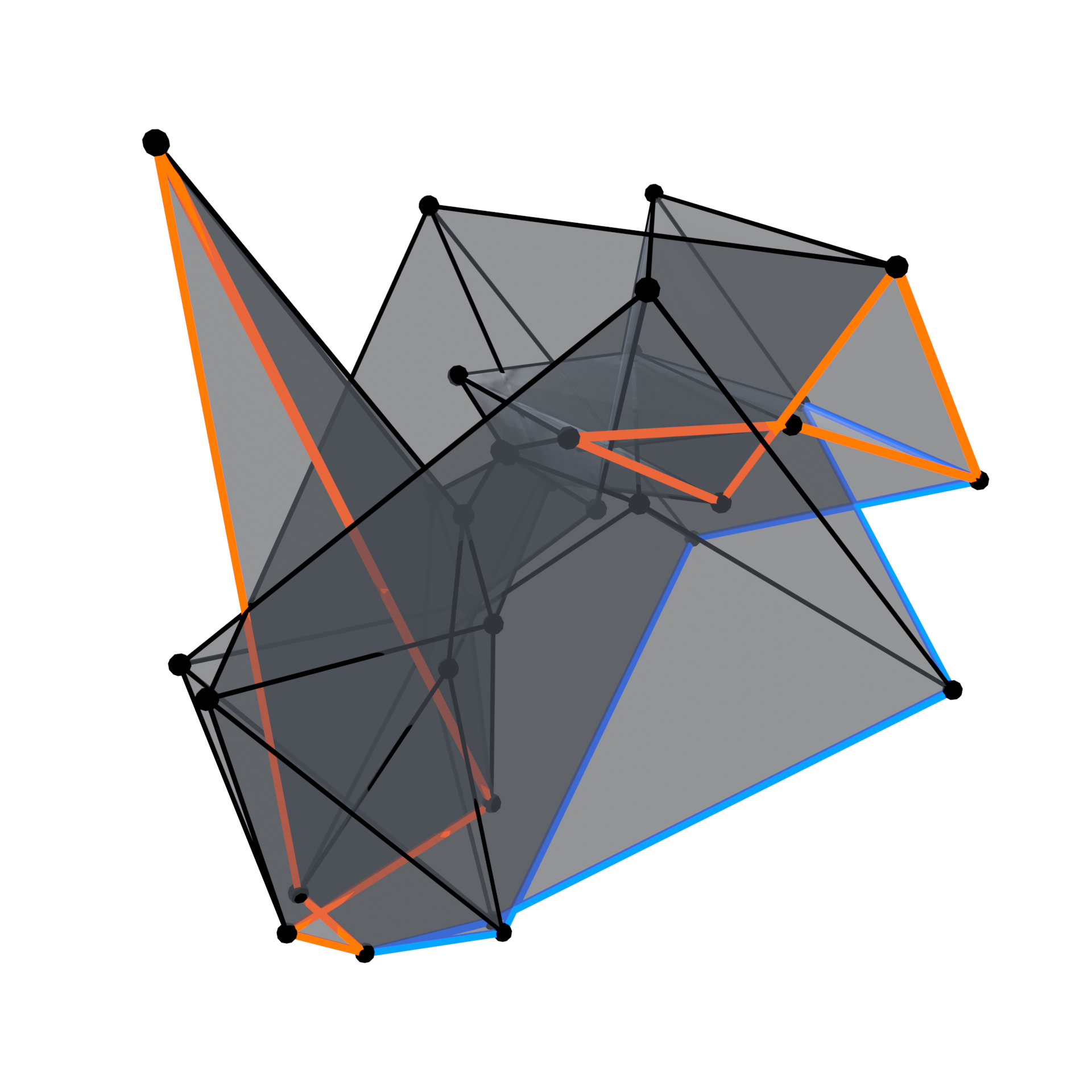}
		\includegraphics[width=0.48\textwidth]{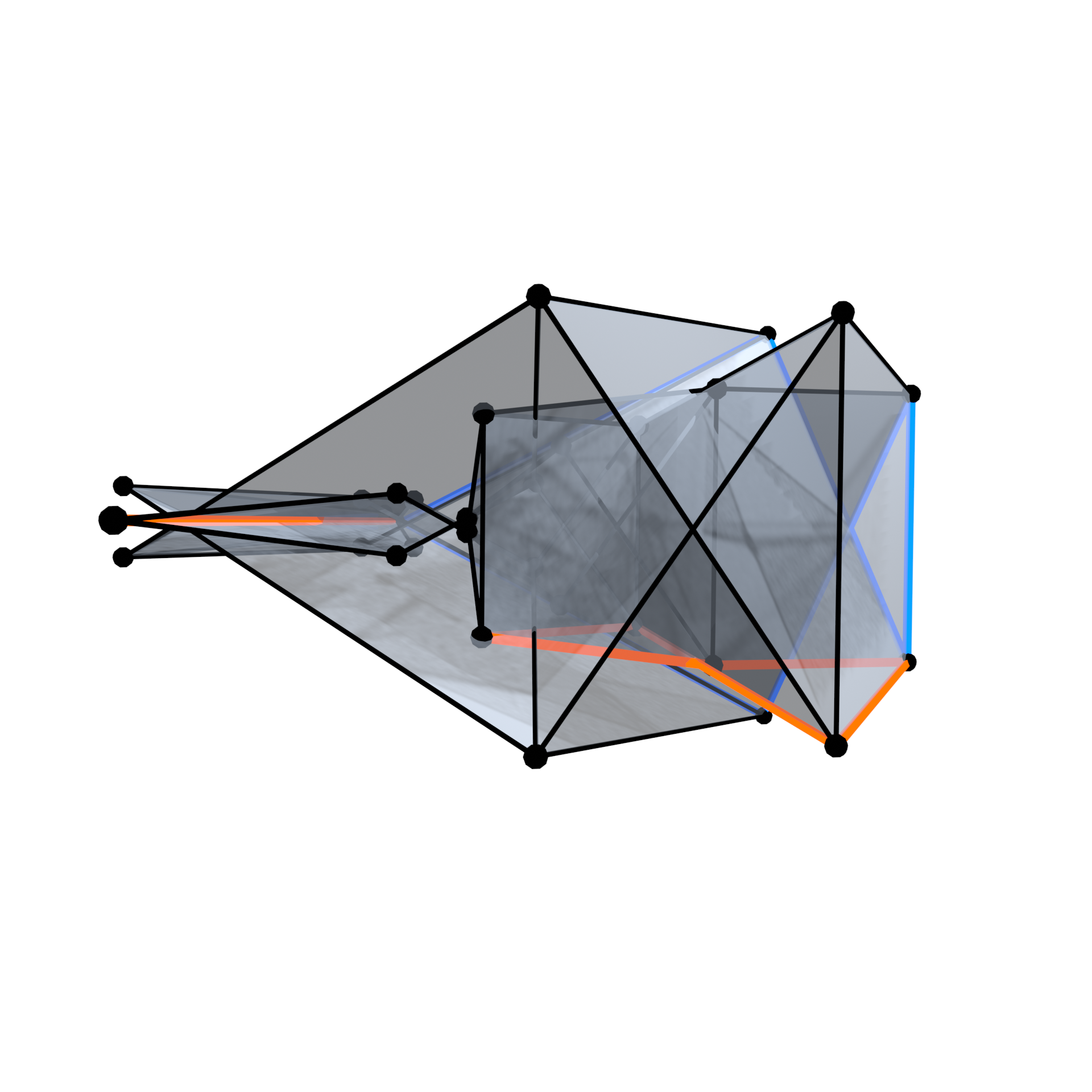}\\
		\includegraphics[width=0.42\textwidth]{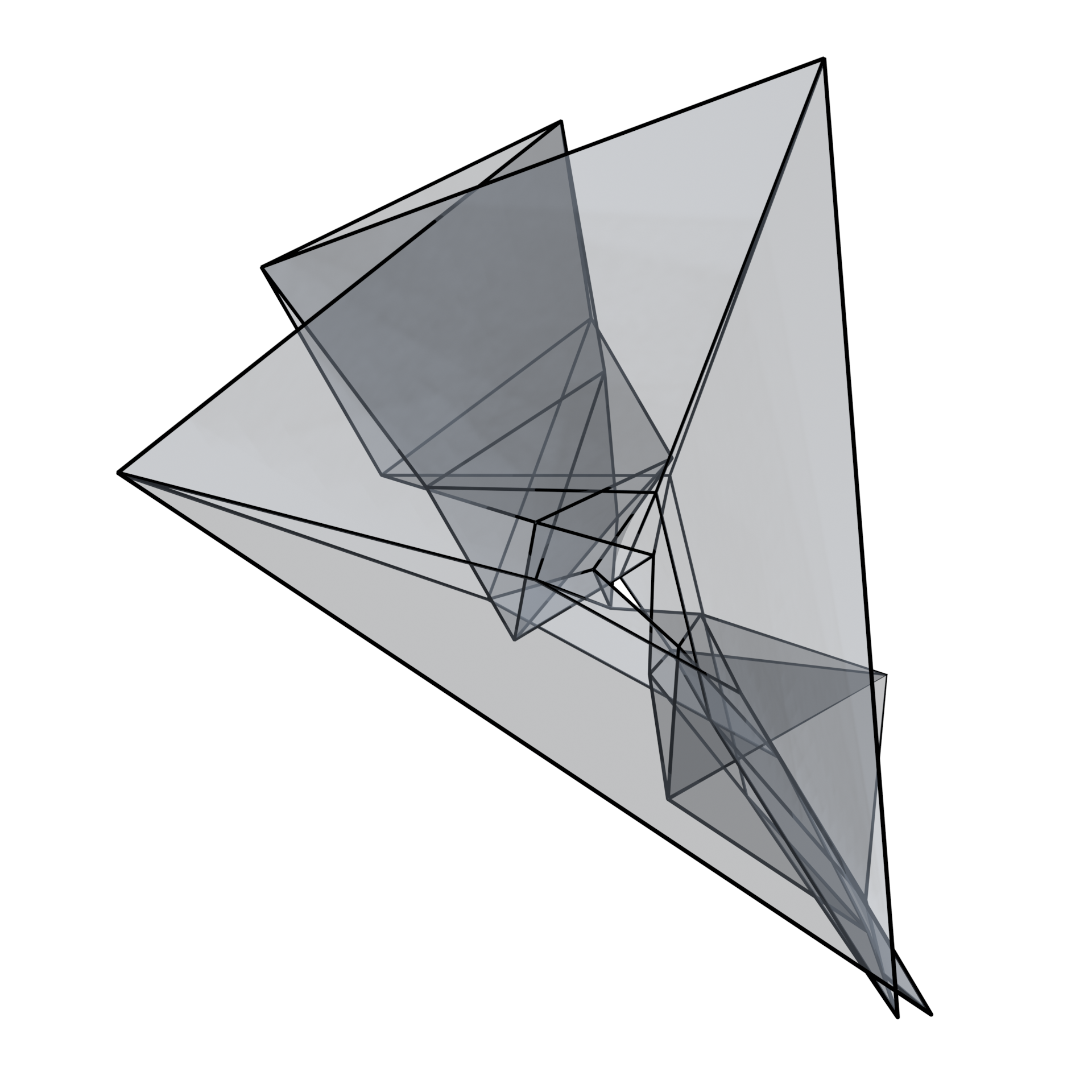}%
		\includegraphics[width=0.42\textwidth]{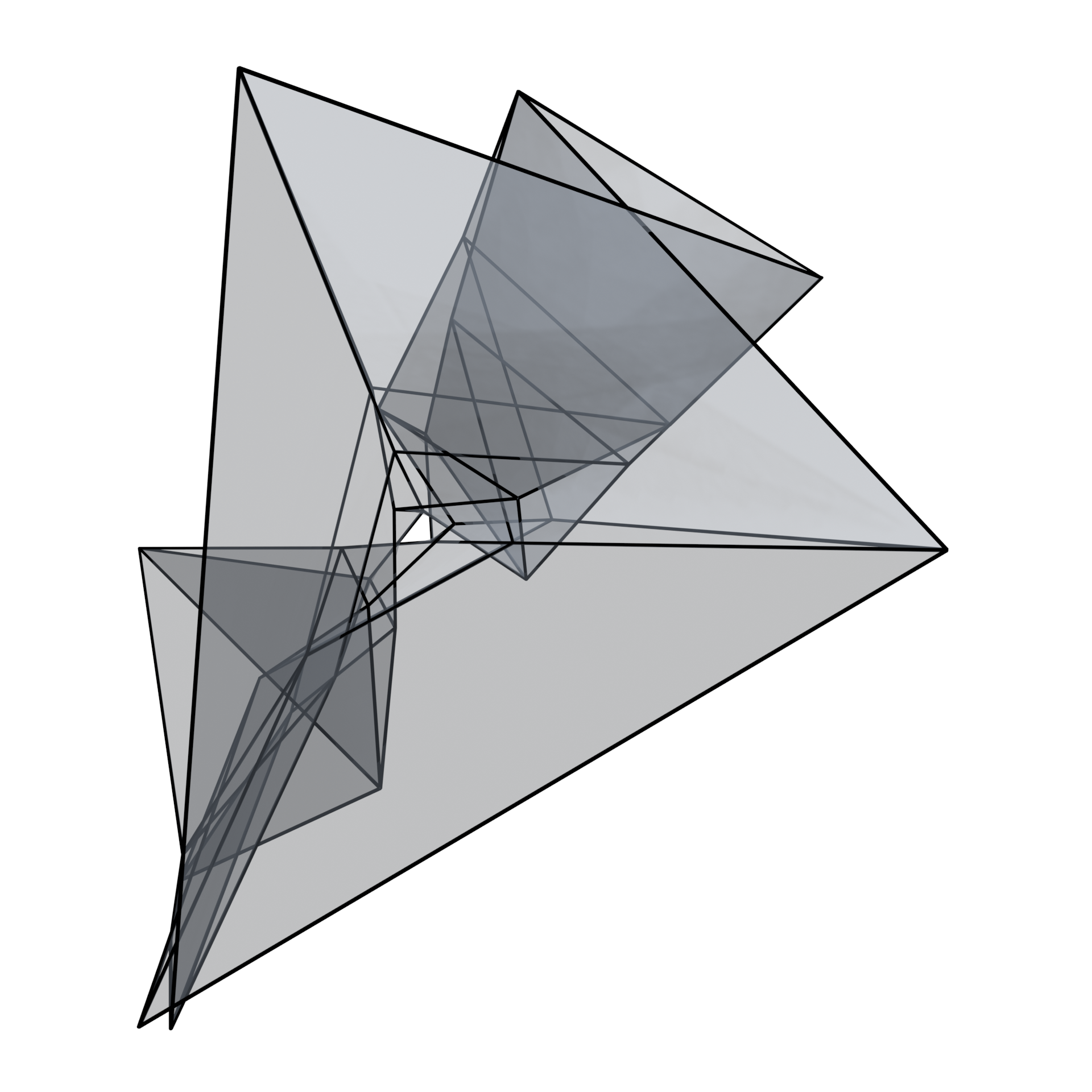}%
	\end{center}
	\caption{Two views (top) of the first numerical example of a discrete isothermic torus that gives rise to a discrete compact Bonnet pair of tori (bottom). This extremely coarse numerical example on a $5\times7$ lattice exhibits the remarkable properties that led to the discovery of smooth compact Bonnet pairs. On the discrete isothermic torus, for example, the curvature lines with 5 vertices are planar (two planar generators are highlighted in orange) and the curvature lines with 7 vertices are spherical (generator highlighted in blue). Moreover, interleaving dualization and inversion operations leads to other examples of discrete isothermic tori that give rise to discrete Bonnet pairs. All these examples of isothermic tori resemble each other, suggesting the symmetries of the smooth isothermic torus in Theorem~\ref{thm:planarIsothermicCylinderGeometry} that are essential to the construction, and derived using the theta function formulas for the family of planar curves.}
	\label{fig:discretePlanarExample}
\end{figure}

An immersed discrete parametrized surface or discrete net is a map from a subset of the standard lattice $f: \Z^2 \to \R^3$ with nonvanishing straight edges in $\R^3$. We use subscripts to denote shifts in a particular lattice direction, i.e., with $f(n,m) = f$ we have $f(n+1, m) = f_1, f(n,m+1) = f_2$ and $f(n+1,m+1) = f_{12}$.

Isothermic surfaces are characterized by having coordinates that are both conformal and curvature line coordinates. The well-studied discrete analog from integrable systems has the following geometric definition~\cite[Definition 6]{DiscreteIsothermic96}.

\begin{definition}
	A map $f : \Z^2 \to \rm{Im} \H = \R^3$ is called a \emph{discrete isothermic net} if for each quad
	\begin{align}
		\label{eq:discreteIsothermicCrossRatioMinusOne}
		(f_1 - f) (f_{12}-f_1)^{-1}(f_{12} - f_2)(f_2-f)^{-1} = -1
	\end{align}
\end{definition}
\begin{remark}
	This definition is equivalent to every quad having coplanar vertices and complex cross-ratio $-1$ in its respective plane, so it can be conformally mapped by a fractional linear transformation to a square. Vertices of each quad lie on a circle. Discrete nets with concircular quads are a discrete analog of curvature line coordinates, see for example \cite{ddgAMS2008}. Hence, these nets can be understood as being in conformal, curvature line coordinates.
\end{remark}

Discrete isothermic nets have dual nets~\cite[Theorem 6]{DiscreteIsothermic96} given by integrating an analogous expression to the smooth one-form \eqref{eq:isothermic_dual}.

\begin{proposition}
	Every discrete isothermic net $f : \Z^2 \to \R^3$ has a dual net $f^* : \Z^2 \to \R^3$ that is defined up to global translation by
	\begin{align}
		f_1^* - f^* = \frac{f_1 - f}{|f_1 - f|^2}, \quad  f_2^* - f^* = \frac{f_2 - f}{|f_2 - f|^2}.
	\end{align}
\end{proposition}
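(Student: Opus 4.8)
The plan is to produce $f^*$ by prescribing all of its edges and then checking that these are consistent, so that $f^*$ integrates to a well-defined map whose only ambiguity is an additive constant --- the claimed global translation. The rule in the statement determines every edge of $f^*$: its edge emanating from a vertex $g$ in the first lattice direction is $\frac{g_1-g}{|g_1-g|^2}$ and in the second direction $\frac{g_2-g}{|g_2-g|^2}$. Since the $1$-skeleton of $\Z^2$ is simply connected --- every closed edge loop bounds a union of elementary quads --- such an $f^*$ exists, and is unique up to an additive constant, precisely when the four prescribed $f^*$-edges around each elementary quad $(f,f_1,f_{12},f_2)$ sum to zero. Walking the two ways from $f$ to $f_{12}$, this closedness condition is the identity
\[
\frac{f_1-f}{|f_1-f|^2}+\frac{f_{12}-f_1}{|f_{12}-f_1|^2}=\frac{f_2-f}{|f_2-f|^2}+\frac{f_{12}-f_2}{|f_{12}-f_2|^2},
\]
so the whole proposition reduces to establishing this one identity per quad.

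To prove it, I would rewrite every term using $x/|x|^2=-x^{-1}$, valid for a nonzero imaginary quaternion $x$, so that the claim becomes $(f_1-f)^{-1}+(f_{12}-f_1)^{-1}=(f_2-f)^{-1}+(f_{12}-f_2)^{-1}$, and then invoke the defining relation $(f_1-f)(f_{12}-f_1)^{-1}(f_2-f)(f_{12}-f_2)^{-1}=-1$. One route is a direct quaternionic computation: with $a=f_1-f$, $b=f_{12}-f_1$, the cross-ratio relation lets one express $f_2-f$ and $f_{12}-f_2$ in terms of $a,b$ (using also $a+b=(f_2-f)+(f_{12}-f_2)$), after which both sides collapse to the same expression. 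A more conceptual route is a M\"obius normalization: the cross-ratio being the real number $-1$ forces the four vertices to be concircular and in harmonic position, so a M\"obius transformation of $\R^3\cup\{\infty\}$ carries the quad to a standard planar harmonic quadruple; the identity in question is conformally covariant, so it suffices to verify it in that model, where $\frac{z_1-z}{|z_1-z|^2}=1/\overline{z_1-z}$ turns it into a one-line holomorphic partial-fraction identity in a single complex variable.

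With closedness established, set $f^*(n,m)=c+\sum$ of the prescribed $f^*$-edges along any lattice path from the origin to $(n,m)$; path-independence is exactly closedness, the two difference equations hold by construction, and the only freedom is the choice of $c\in\R^3$, i.e.\ a global translation. The one genuine obstacle is the per-quad algebraic identity: it is the single place where the hypothesis ``cross-ratio $=-1$'' actually enters, and the noncommutativity of $\H$ makes the bookkeeping in the direct computation a bit fussy, so the cleanest presentation is the reduction to the planar harmonic normal form sketched above.
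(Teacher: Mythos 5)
You reduce the proposition to a per-quad closedness identity and then integrate over the simply connected lattice; that skeleton is exactly the standard route (the paper itself gives no proof, quoting the result from Bobenko--Pinkall's discrete isothermic surfaces paper), and the integration/uniqueness-up-to-translation part of your argument is fine. The genuine gap is that the per-quad identity you reduce everything to is asserted, not proved, and it is in fact false. With the standard cross-ratio convention $(f_1-f)(f_{12}-f_1)^{-1}(f_{12}-f_2)(f_2-f)^{-1}=-1$, what follows is not
\begin{equation*}
(f_1-f)^{-1}+(f_{12}-f_1)^{-1}=(f_2-f)^{-1}+(f_{12}-f_2)^{-1},
\end{equation*}
but the version with a relative sign, i.e.\ the dual must be built with $f_2^*-f^*=-\,(f_2-f)/|f_2-f|^2$, mirroring the minus sign in the smooth formula $df^*=e^{-2h}(f_u\,du-f_v\,dv)$ of \eqref{eq:isothermic_dual}. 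A concrete check: the collinear quad $f=0$, $f_1=\alpha$, $f_2=\beta$, $f_{12}=\tfrac{2\alpha\beta}{\alpha+\beta}$ has cross-ratio $-1$, yet $\tfrac1\alpha+\tfrac1{f_{12}-\alpha}=\tfrac{2\beta}{\alpha(\beta-\alpha)}$ while $\tfrac1\beta+\tfrac1{f_{12}-\beta}=\tfrac{2\alpha}{\beta(\alpha-\beta)}$, which differ; the signed combination does agree. (Under the formula \eqref{eq:discreteIsothermicCrossRatioMinusOne} as literally printed in the paper the unsigned identity fails too: writing $a=f_1-f$, $b=f_{12}-f_1$, it is equivalent to the extra condition $(a^{-1}b)^2=-1$, i.e.\ orthogonal adjacent edges of equal length. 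So there is a sign/ordering issue in the statement itself that any proof must first resolve; your argument does not establish the identity under either convention.)

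Both of your proposed routes for the crucial identity therefore break down. The ``direct quaternionic computation'' does not collapse: with $a+b=a'+b'$ one has $a^{-1}+b^{-1}=a^{-1}(a+b)b^{-1}$ and $b'^{-1}+a'^{-1}=b'^{-1}(a+b)a'^{-1}$, and the cross-ratio relation equates the signed, not the unsigned, combinations. The M\"obius-normalization route is not available at all: Christoffel duality $\Delta f\mapsto \Delta f/|\Delta f|^2$ is a Euclidean construction, not a M\"obius one, so the closedness identity is not conformally covariant --- only the cross-ratio hypothesis is M\"obius invariant. Hence verifying the identity on a single normalized harmonic quad proves nothing, and it is actively misleading here, since on the unit square both the correct signed identity and your unsigned one hold for symmetry reasons, so the normalization check cannot detect the sign error. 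The repair is to state the dual with the minus sign in the second lattice direction (consistently with the standard cross-ratio convention), prove the per-quad identity by the $a,b,a',b'$ computation indicated above, and then your path-integration and translation-freedom argument goes through verbatim.
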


Recall the quaternionic function theory characterization of smooth Bonnet pairs from smooth isothermic surfaces in \cite{KPP}
\begin{align}
	df^\pm = (\pm \epsilon - f) df^* (\pm \epsilon + f).
\end{align}
Analogously, discrete Bonnet pairs have recently been defined from discrete isothermic nets.

\begin{proposition}
	Let $f : \Z^2 \to \R^3$ be a discrete isothermic net with dual net $f^* : \Z^2 \to \R^3$. Then for all $\epsilon \in \R$ the transformations defined by
	\begin{equation}
		\begin{aligned}
		f^\pm_1 - f^\pm &= \Imq \bigg( (\pm \epsilon - f) (f_1^* - f^*) (\pm\epsilon + f_1) \bigg),\\	
		f^\pm_2 - f^\pm &= \Imq \bigg( (\pm \epsilon - f) (f_2^* - f^*) (\pm \epsilon + f_2) \bigg)	
		\end{aligned}
	\end{equation}
	integrate to two discrete nets $f^\pm: \Z^2 \to \R^3$, i.e., 
	\begin{equation*}
		\begin{aligned}
			\Imq \bigg( (\pm \epsilon - f) (f_1^* - f^*) (\pm\epsilon + f_1) + (\pm \epsilon - f_1) (f_{12}^* - f_1^*) (\pm \epsilon + f_{12}) \\- (\pm \epsilon - f_2) (f_{12}^* - f_2^*) (\pm \epsilon + f_{12}) - (\pm \epsilon - f) (f_{2}^* - f^*) (\pm \epsilon + f_{2}) \bigg) = 0.
		\end{aligned}
	\end{equation*}
\end{proposition}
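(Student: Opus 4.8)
The plan is to imitate the closedness half of the proof of Theorem~\ref{t:KPP}. Since $\Imq$ is $\R$-linear and $\epsilon\in\R$, the displayed closing identity is equivalent to the assertion that the quaternionic sum $S$ of the four signed products around the quad---the expression appearing inside $\Imq$ in the statement---is a real scalar. Expanding each of the four products in powers of $\epsilon$, exactly as the smooth decomposition $df^\pm=-f\,df^*f\pm\epsilon[df^*,f]+\epsilon^2df^*$ used in the proof of Theorem~\ref{t:KPP}, writes $S=\epsilon^2S_2\pm\epsilon S_1+S_0$ with $S_2,S_1,S_0$ depending only on the quad. Since both nets $f^+$ (sign $+$) and $f^-$ (sign $-$) must close for the same fixed $\epsilon\neq0$, it suffices to show $S_2=0$, $S_1=0$, and $\Imq S_0=0$ separately.

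The $\epsilon^2$-term is immediate: $S_2=(f_1^*-f^*)+(f_{12}^*-f_1^*)-(f_{12}^*-f_2^*)-(f_2^*-f^*)$ telescopes to zero, which is just the statement that the dual values close around the quad, i.e.\ that $f^*$ is itself a discrete net---exactly what the discrete isothermic condition \eqref{eq:discreteIsothermicCrossRatioMinusOne} guarantees (the dual-net proposition above). For the $\epsilon^1$-term I would write each tip vertex as $f_\ell=f+(f_\ell-f)$ inside the factors $(\pm\epsilon+f_\ell)$ and use $(f_k^*-f^*)(f_k-f)=\frac{(f_k-f)^2}{|f_k-f|^2}=-1$; then the constant terms cancel in the signed sum, the commutators of the form $[\,\cdot\,,f\,]$ collect into $[S_2,f]=0$, and what remains is
\[
S_1=\big[\,f_{12}^*-f_1^*,\;f_1-f\,\big]-\big[\,f_{12}^*-f_2^*,\;f_2-f\,\big],
\]
a difference of commutators of the reciprocal edge vectors of the quad. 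This is the discrete analogue of the closedness of $[df^*,f]$, and it follows from the multiplicative relation among the four edge vectors of a quad encoded in \eqref{eq:discreteIsothermicCrossRatioMinusOne}; since a commutator of imaginary quaternions is again imaginary, $\Imq S_1=S_1$, so this relation in fact forces $S_1=0$.

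The $\epsilon^0$-term is the heart of the matter: $S_0$ is the around-the-quad sum of the products $f\,(f_k^*-f^*)\,f_\ell$, the discrete analogue of the closedness of $f\,df^*f$. In the smooth case that identity was obtained by peeling off the contribution $f\big(((f_u)^{-1})_v+((f_v)^{-1})_u\big)f$, which vanishes because $df^*$ is closed, and then cancelling the remainder by the conformality relations $f_u^2=f_v^2$, $f_uf_v=-f_vf_u$ sandwiched between $f(\cdot)f$. I would reproduce this with finite differences: split off the part that vanishes by $S_2=0$ conjugated by $f$, rewrite $f_k^*-f^*$ as $-(f_k-f)^{-1}$, and cancel the remaining terms using the cross-ratio identity \eqref{eq:discreteIsothermicCrossRatioMinusOne}. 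The hard part will be exactly this step: finite differences do not telescope as cleanly as derivatives, and one must carry the genuinely three-dimensional vertices $f,f_1,f_2,f_{12}$ while they conjugate the planar edge reciprocals, so the bookkeeping is substantially heavier than for the smooth one-form identity---though no ingredient beyond \eqref{eq:discreteIsothermicCrossRatioMinusOne} is needed.

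As a cross-check and a possible shortcut, one can alternatively recognize $f^\pm$ as obtained from $f$ by a composition of a discrete Darboux transform with a Christoffel dualization (equivalently, a discrete Calapso transform), whose integrability is already available in the discrete isothermic literature; but the $\epsilon$-expansion above is the most self-contained route and mirrors the smooth argument in the excerpt step for step.
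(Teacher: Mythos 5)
Your strategy---expand in powers of $\epsilon$ and kill the three coefficients separately, mirroring the closedness half of Theorem~\ref{t:KPP}---is the right one, and your reduction of the $\epsilon^1$ coefficient to $[\,f_{12}^*-f_1^*,\,f_1-f\,]-[\,f_{12}^*-f_2^*,\,f_2-f\,]$ is correct (the constants from $(f_k^*-f^*)(f_k-f)$ do cancel in the signed sum, and the $[\cdot,f]$ terms collect into $[S_2,f]=0$). But as written this is a plan, not a proof: the only coefficient you actually dispose of is the trivial $\epsilon^2$ telescoping, while the two identities that carry all the content are asserted rather than established. For the $\epsilon^1$ term you say the remaining difference of commutators ``follows from the multiplicative relation encoded in \eqref{eq:discreteIsothermicCrossRatioMinusOne}'' without doing the computation; this step is not cosmetic, because it is sensitive to the sign conventions in the dual (with the standard dual, which carries opposite signs in the two lattice directions, the cross-ratio condition gives $(f_1-f)(f_{12}-f_1)^{-1}=-(f_2-f)(f_{12}-f_2)^{-1}$ in the plane of the concircular quad, and taking the component normal to that plane is exactly what makes the two commutators cancel---you need to write this out, and to reconcile it with the sign conventions you are using). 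For the $\epsilon^0$ term you explicitly defer the argument (``the hard part will be exactly this step''), so the heart of the proposition is missing.

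Two remarks that would let you close the gaps cleanly. First, for the $\epsilon^0$ coefficient you do not need heavy bookkeeping: a short computation with $f^{-1}=-f/|f|^2$ shows
\begin{equation*}
f\,(f_1^*-f^*)\,f_1 \;=\; \frac{f\,(f_1-f)\,f_1}{|f_1-f|^2}\;=\;|f|^2\,|f_1|^2\;\frac{(-f^{-1})_1-(-f^{-1})}{\left|(-f^{-1})_1-(-f^{-1})\right|^2},
\end{equation*}
i.e.\ the four cubic terms are precisely the dual edges of the inverted net $-f^{-1}$, which is again discrete isothermic by M\"obius invariance of the cross-ratio; their signed sum around the quad therefore telescopes to zero, which is the discrete analogue of Corollary~\ref{cor:A-condition} ($\int -f\,df^*f=(f^{-1})^*$), and these terms are already imaginary so no $\Imq$ subtlety arises. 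Second, note that the paper itself gives no proof of this proposition (it points to the references on discrete Bonnet pairs), so you cannot lean on ``no ingredient beyond \eqref{eq:discreteIsothermicCrossRatioMinusOne} is needed'' as a substitute for carrying out the two computations above; until you do, the proposal has a genuine gap.
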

\begin{definition}
	The nets $f^\pm: \Z^2 \to \R^3$ form a \emph{discrete Bonnet pair}.
\end{definition}
This construction was introduced as a remark in \cite{HSFW17}, alongside a theory of first and second fundamental forms for discrete nets. The local theory of discrete Bonnet pairs is thoroughly investigated in \cite{HSFW24} as the main application of a conformal theory for discrete nets immersed in $\R^3$ using quaternions. In particular, a characterization of discrete Bonnet pairs is given in terms of special coordinates (equivalent to the normalized form \eqref{eq:normalizationQ} of the Hopf differential), and a geometric understanding of the mean curvature preserving isometry between the pair of nets is described. 

To discover numerical examples of compact discrete Bonnet pairs, we worked on an extremely coarse $\Z/5\Z \times \Z / 7\Z$ torus. We implemented an optimization algorithm in \emph{Mathematica}~\cite{Mathematica} that searched for a map $f: \Z/5\Z \times \Z / 7\Z \to \R^3$ such that
\begin{enumerate}
	\item $f$ is a discrete isothermic torus, i.e., each quad satisfies \eqref{eq:discreteIsothermicCrossRatioMinusOne},
	\item the integrated net $f^\pm$ are tori, i.e., $f^\pm (5,m) = f^\pm (0,m), m \in \Z / 7\Z$ and $f^\pm (n,7) = f^\pm (n,0), n \in \Z / 5\Z$.
\end{enumerate}

A remarkable feature of the structure preserving discretization is that the discrete theory also appears to well-approximate the smooth theory in a continuum limit. Sampling the fundamental piece of a smooth isothermic torus that gives rise to smooth Bonnet tori, and then optimizing for it to be the fundamental piece of a discrete isothermic torus leading to discrete Bonnet tori, barely moves the vertices. The surfaces are visually indistinguishable.

\section{Concluding remarks}
\label{sec:conclusion}
We establish that the metric and mean curvature do not determine a unique compact surface, by providing the first set of examples of compact Bonnet pairs. Moreover, we prove that a real analytic metric does not determine a unique compact immersion. It was unexpected that the construction we found has a functional freedom in it. There appears to be a lot left to understand and explore. We state some open questions to stimulate further research and the development of new techniques. 

\begin{question}[Embedded examples]
	Do there exist compact Bonnet tori where both surfaces are embedded?
\end{question}

\begin{question}[Tori classification: No planarity assumption]
	Do there exist compact Bonnet pairs from isothermic tori without one family of planar (or spherical) curvature lines? More generally, can one classify all compact Bonnet pairs that are tori?
\end{question}

\begin{question}[Higher genus examples] 
	Do there exist compact Bonnet pairs of higher genus?
\end{question}

\begin{question}[Constant mean curvature examples]
	Do there exist compact Bonnet pairs with constant mean curvature? In other words, does the associated family of a compact constant mean curvature surface contain at least one other compact surface that is not congruent to the original? (Note that the restriction to at most two compact immersions does not apply in the constant mean curvature setting.)
\end{question}
\appendix

\section{Explicit formulas for Bonnet pair cylinders from an isothermic surface with one generic family of closed planar curvature lines}
\label{sec:explicit-bonnet-formulas}

\paragraph{Explicit formula version of Theorem~\ref{thm:BPFromPlanarFormulas}}:
\begin{theorem}
	Let $f(u,v) = \Phi(v)^{-1} \gamma(u,\w(v)) \qj \Phi(v)$ be an isothermic cylinder with one generic family of closed planar curvature lines as in Theorem~\ref{thm:planarIsothermicCylinderFormulas}. For each $\epsilon \in \R$, the resulting Bonnet pair surfaces $f^\pm(u,v)$ are real analytic cylinders with translational periods in $v$ that are equal up to sign. Their immersion formulas are:
	\begin{align}
		\begin{aligned}
			\label{eq:fPlusMinusBasicStructure}
			f^{\pm}(u,v) = &\overbrace{R(\omega)^2f(\pi-2\omega + u, v) - \epsilon^2 f(\pi - u, v)}^{\Apart{} = (f^{-1})^* + \epsilon^2 f^*} \pm \underbrace{2\epsilon \left( \Phi^{-1}(v) \hat B(u,\w(v)) \qi \Phi(v)  + \tilde B(v) \right)}_{\Bpart{} = 2 \epsilon \int \Imq ( df^* f )},
		\end{aligned}
	\end{align}
	where 
$		R(\omega) = \frac{2 \vartheta _2(\omega)^2}{\vartheta_1^{\prime}(0) \vartheta _1(2 \omega)}$~\eqref{eq:radiusOmega}, $\hat B(u,\w(v))$ is a real analytic real-valued function that is $2\pi$-periodic in $u$, and $\tilde B(v)$ is a real analytic $\R^3$-valued function that depends only on $v$.
	
	Explicitly,
	\begin{align}
		\hat B(u,\w) &= R(\omega)^2 \frac{\vartheta_1(2\omega)}{\vartheta_1'(0)} \left( \frac{\vartheta_2''(\omega)}{\vartheta_2(\omega)}\frac{\w}{2} - \Imc \frac{\vartheta_2'(\frac{u+\ci \w - \omega}{2})}{\vartheta_2(\frac{u+\ci \w - \omega}{2})} \right) \label{eq:BpartUIntegral}
	\end{align}
	and $\tilde B(v)$  is determined by
	\begin{align}
		\label{eq:tildeBPrimeExplicit}
		\tilde B'(v) &=  \Phi^{-1}(v) \left( R(\omega) \sqrt{1 - \w'(v)^2} \, \tilde b(\w(v)) \qk \right) \Phi(v),
	\end{align}
	where the complex-valued function $\tilde b(\w)$ is
	\begin{align}
		\tilde b(\w) &= \frac{2\vartheta_2(\omega)}{\vartheta_1'(0)}\frac{\vartheta_2(\ci \w - \omega)}{\vartheta_1(\ci \w)} \left( \frac{\vartheta_2''(\omega)}{\vartheta_2(\omega)}\frac{\w}{2} - \Imc \frac{\vartheta_2'(\frac{\ci \w}{2})}{\vartheta_2(\frac{\ci \w}{2})} \right) - \ci \frac{\vartheta_1(\frac{\ci \w}{2} -\omega)^2}{\vartheta_2(\frac{\ci \w}{2})^2}. \label{eq:littleBTilde}
	\end{align}
\end{theorem}

\begin{proof}
		We first derive the structure of the immersion formulas~\eqref{eq:fPlusMinusBasicStructure} and then compute $\hat B(u,\w(v))$ and $\tilde B(v)$.
		\begin{itemize}
			\item
			{\bf Structure of immersion formulas}.
			For the $\Apart{}$ part we write $\Apart{} = \int -f df^* f +\epsilon^2 df^*= (f^{-1})^*+\epsilon^2 f^*$ and then use the inversion and dual formulas, \eqref{eq:fInvUShift} and \eqref{eq:fDualUShift}, respectively.
			\begin{align*}
				(f(u,v)^{-1})^* = -R(\omega)^2 f(2\omega - u, v)^* = R(\omega)^2 f(\pi - 2\omega + u, v).
			\end{align*}
			
			To derive the structure in the $\Bpart{}$ part we use the isothermic parametrization \eqref{eq:fBasicStructure} for $f(u,v)$. We set $\int \Imq ( df^* f ) = \Phi^{-1} B \Phi$ for some $\R^3$-valued function $B(u,v)$. Its partial derivatives in terms of $u$ and $v$ satisfy
			\begin{align}
				(\Phi^{-1}(v) B(u,v) \Phi(v))_u &= \Imq((f^{*})_u f), \label{eq:BpartUDerivative}\\
				(\Phi^{-1}(v) B(u,v) \Phi(v))_v &= \Imq((f^{*})_v f). \label{eq:BpartVDerivative}
			\end{align}
			As the frame $\Phi(v)$ is independent of $u$, the left side of \eqref{eq:BpartUDerivative} is $\Phi^{-1} B_u \Phi$. The right side is computed with \eqref{eq:fBasicStructure} and $(f^*)_u=\frac{f_u}{|f_u|^2}$, so that  \eqref{eq:BpartUDerivative} is equivalent to
			\begin{align}
				\label{eq:PhiInvBPhiuDerivative}
				\Phi^{-1} B_u \Phi = \Phi^{-1} \Imc\left(\gamma_u^{-1}\gamma \right) \qi \Phi.
			\end{align}
			Since $\gamma$ is a function of $u$ and $\w(v)$ we define the real-valued function
			\begin{align}
				\hat B(u,\w(v)) &= \int_0^u \Imc\left(\gamma_u(\tilde u,\w(v))^{-1}\gamma(\tilde u,\w(v)) \right) d\tilde u.
				\label{eq:BpartUIntegralInGammas}
			\end{align}
			
			Therefore, integrating \eqref{eq:PhiInvBPhiuDerivative} with respect to $u$ gives
			\begin{align}
				\label{eq:PhiInvBPhiIsHatBPlusTildeB}
				\Phi^{-1}(v) B(u,v) \Phi(v) &= \Phi^{-1} \hat B (u,\w(v)) \qi \Phi + \tilde B(v),
			\end{align}
			where $\tilde B(v)$ is an $\R^3$-valued integration function that depends only on $v$. Note that $\tilde B(v)$ is not arbitrary. Since the right hand side of \eqref{eq:BpartVDerivative} and $\hat B(u,\w(v))$ are known, we differentiate \eqref{eq:PhiInvBPhiIsHatBPlusTildeB} with respect to $v$, evaluate at any $u = u_0$, and find the following ODE for $\tilde B(v)$.
			\begin{align}
				\label{eq:tildeBPrimeBasicStructure}
				\tilde B'(v) = \Imq \left( (f^*)_v(u_0,v) f(u_0,v) \right) - \left( \Phi^{-1}(v) \hat B(u_0,\w(v)) \qi \Phi(v) \right)_v.
			\end{align}
			
			\item {\bf Deriving the formula for $\hat B(u,\w(v))$~\eqref{eq:BpartUIntegral}}. From~\eqref{eq:BpartUIntegralInGammas} we have that $$\hat B(u,\w(v)) = \int_0^u \Imc\left(\gamma_u(\tilde u,\w(v))^{-1}\gamma(\tilde u,\w(v)) \right) d\tilde u.$$ Now, using \eqref{eq:gammaClosedCurves} and \eqref{eq:gammaUClosedCurves} we find
			\begin{align*}
				\gamma_u^{-1}\gamma &= \left(\frac{\vartheta _1\left(\frac{1}{2} (u+\ci\w+\omega)\right)}{\vartheta _2\left(\frac{1}{2} (u+\ci\w -\omega)\right)}\right)^2 \frac{2 \vartheta _2(\omega)^2}{\vartheta_1^{\prime}(0) \vartheta _1(2 \omega)} \frac{\vartheta_1\left(\frac{1}{2} (u+\ci \w-3 \omega)\right)}{\vartheta_1\left(\frac{1}{2} (u+\ci\w+\omega)\right)} \\
				& = \frac{2 \vartheta _2(\omega)^2}{\vartheta_1^{\prime}(0) \vartheta _1(2 \omega)} \frac{\vartheta _1\left(\frac{1}{2} (u+\ci\w+\omega)\right) \vartheta_1\left(\frac{1}{2} (u+\ci \w-3 \omega)\right)}{\vartheta _2\left(\frac{1}{2} (u+\ci\w -\omega)\right)^2}.
			\end{align*}
			This is an elliptic function in $z = \frac{u + \ci \w}{2}$ with a fundamental set of zeroes $\alpha_1 = \frac{-\omega}{2}, \alpha_2 = \frac{3\omega}{2}$ and poles $\beta_1 = \frac{\omega + \pi}{2}, \beta_2 = \frac{\omega-\pi}{2}$ satisfying $\alpha_1 - \beta_1 + \alpha_2 - \beta_2 = 0$. Using the method outlined in Section 21.5 of Whittaker and Watson~\cite{whittaker_watson_1996} to rewrite an elliptic function as a sum of Jacobi zeta functions, their derivatives, and constants, we find with $z = \frac{u + \ci \w}{2}$ that
			\begin{align}
				\gamma_u^{-1}\gamma = \frac{\partial}{\partial z} \left( \frac{2 \vartheta _2(\omega)^4}{\vartheta_1^{\prime}(0)^3 \vartheta _1(2 \omega)}
				\left ( z \frac{\vartheta_2'(\omega)^2 + \vartheta_2(\omega) \vartheta_2''(\omega)}{\vartheta_2(\omega)^2} - \frac{\vartheta_2'(z-\frac{\omega}{2})}{\vartheta_2(z-\frac{\omega}{2})} \right ) \right).
				\label{eq:BpartUExplicitZDerivative}
			\end{align}
			Now, note $R(\omega) = \frac{2 \vartheta _2(\omega)^2}{\vartheta_1^{\prime}(0) \vartheta _1(2 \omega)}$~\eqref{eq:radiusOmega} and from Theorem~\ref{thm:planarIsothermicCylinderFormulas} that $\omega$ is \emph{critical}, so it satisfies $\vartheta_2'(\omega) = 0$. Putting these facts together with \eqref{eq:BpartUExplicitZDerivative} gives
			\begin{align*}
				\int (\gamma_u^{-1}\gamma) du = R(\omega)^2 \frac{\vartheta_1(2\omega)}{\vartheta_1'(0)} \left( \frac{\vartheta_2''(\omega)}{\vartheta_2(\omega)}\frac{u + \ci \w(v)}{2} - \frac{\vartheta_2'(\frac{u+\ci \w(v) - \omega}{2})}{\vartheta_2(\frac{u+\ci \w(v) - \omega}{2})} \right) + C,
			\end{align*}
			for a constant $C \in \C$. For a rhombic lattice, $\overline{\vartheta_i(z)} = e^{-\ci \pi / 4} \vartheta_i(\bar z)$, so the constants given in terms of ratios of theta functions are real-valued. Thus, we have proven that $\hat B(u,\w(v))$, defined by \eqref{eq:BpartUIntegralInGammas} as the complex imaginary part of the above expression, is real-valued, $2\pi$-periodic in $u$ and given as in \eqref{eq:BpartUIntegral}. Without loss of generality we put the integration constant $C$ into the definition of $\tilde B(v)$.
			
			\item	{\bf Deriving the formulas for $\tilde B'(v)$~\eqref{eq:tildeBPrimeExplicit} and $\tilde b(\w)$~\eqref{eq:littleBTilde}}.
			We compute $(f^*)_v f$ and $(\Phi^{-1} \hat B \qi \Phi)_v$ and use \eqref{eq:tildeBPrimeBasicStructure}. Note that \eqref{eq:fvIsothermicPlanarClosed} implies
			\begin{align*}
				f_v &= \Phi^{-1} \left( \sqrt{1-(\w')^2} \left | \gamma_u \right | \qi + \w' \gamma_u \qk \right) \Phi.
			\end{align*}
			Combine this with $(f^*)_v = - \frac{f_v}{|\gamma_u|^2}$ and $f = \Phi^{-1} \gamma \qj \Phi$ to find 
			\begin{align*}
				(f^*)_v f
				& = - \Phi^{-1} \left( \sqrt{1 -  (\w')^2} \frac{\gamma}{| \gamma_u|} \qk + \w' \left(- \overline{(\gamma_u)^{-1} \gamma} \right) \qi \right) \Phi.
			\end{align*}
			Now, because $\Imq \left( \left(- \overline{(\gamma_u)^{-1} \gamma} \right) \qi \right) = - \Imc (\ci (\gamma_u)^{-1} \gamma) \qi$ we have
			\begin{align*}
				\Imq \left( (f^*)_v f \right) = - \Phi^{-1} \left( \sqrt{1 -  (\w')^2} \frac{\gamma}{| \gamma_u|} \qk - \w' \Imc (\ci (\gamma_u)^{-1} \gamma) \qi \right) \Phi.
			\end{align*}
			Since $\hat B(u,\w(v))$ is the complex imaginary part of the integral with respect to $u$ of the function $(\gamma_u)^{-1}\gamma$ that is meromorphic in $u + \ci \w$, we see that
			\begin{align*}
				(\hat B)_v = \w' (\hat B)_{\w} 
				= \w' \Imc \left( \ci (\gamma_u)^{-1} \gamma \right).
			\end{align*}
			Thus,
			\begin{align*}
				\Imq \left( (f^*)_v f \right) = - \Phi^{-1} (\sqrt{1 -  (\w')^2}|\gamma_u|^{-1}\gamma) \qk \Phi + \Phi^{-1} (\hat B )_v \qi \Phi.
			\end{align*}
			On the other hand, $\hat B$ is real-valued and from \eqref{eq:phiPrimePhiInverse} we know that $\Phi' \Phi^{-1}$ lies in the $\qj,\qk$-plane, so
			\begin{align*}
				(\Phi^{-1}(v) \hat B(u, \w(v)) \qi \Phi(v))_v 
				& = \Phi^{-1} (2 \hat B \qi \Phi' \Phi^{-1}) \Phi + \Phi^{-1} (\hat B )_v \qi \Phi.
			\end{align*}
			Substitution into \eqref{eq:tildeBPrimeBasicStructure} implies
			\begin{align*}
				\tilde B ' (v) &= - \Phi^{-1} (\sqrt{1 -  (\w')^2}|\gamma_u|^{-1}\gamma \qk + 2 \hat B \qi \Phi' \Phi^{-1}) \Phi.
			\end{align*}
			Now plug in the appropriate expressions for $\gamma, \gamma_u, \Phi' \Phi^{-1},\hat B$ and simplify to find
			\begin{align}
				\tilde B'(v) &= R(\omega) \sqrt{1 - \w'(v)^2} \Phi^{-1}(v) \tilde b(u, \w(v)) \qk \Phi(v), \\
					\tilde b(u,\w) &= \frac{2 \vartheta_2(\omega)}{\vartheta_1'(0)} \frac{\vartheta_2(\ci \w - \omega)}{\vartheta_1(\ci \w)}  \left( \frac{\vartheta_2''(\omega)}{\vartheta_2(\omega)}\frac{\w}{2} - \Imc \frac{\vartheta_2'(\frac{u+\ci \w - \omega}{2})}{\vartheta_2(\frac{u+\ci \w - \omega}{2})} \right) + \ci \frac{\vartheta_1(\frac{u+\ci \w - 3\omega}{2})\vartheta_1(\frac{u-\ci \w + \omega}{2})}{\vartheta_2(\frac{u+\ci \w - \omega}{2})\vartheta_2(\frac{u-\ci \w - \omega}{2})}.
			\end{align}
			As $\tilde B(v)$ only depends on $v$, $\tilde b(u,\w(v))$ must be independent of $u$. Set $u = \omega$ to get $\tilde b(\w)$ as in \eqref{eq:littleBTilde}.
		\end{itemize}
	\end{proof}

\footnotesize
\bibliographystyle{abbrv}
\bibliography{short-compact-bonnet}

\begin{thebibliography}{10}

\bibitem{zbMATH03032753}
A.~Alexandroff.
\newblock {\"U}ber eine {Klasse} geschlossener {Fl{\"a}chen}.
\newblock {\em Rec. Math. Moscou, n. Ser.}, 4:69--77, 1938.

\bibitem{BergerBook2010}
M.~Berger.
\newblock {\em Geometry revealed}.
\newblock Springer, Heidelberg, 2010.
\newblock A Jacob's ladder to modern higher geometry, Translated from the
  French by Lester Senechal.

\bibitem{Bernstein2001}
H.~Bernstein.
\newblock Non-special, non-canal isothermic tori with spherical lines of
  curvature.
\newblock {\em Trans. Amer. Math. Soc.}, 353(6):2245--2274, 2001.

\bibitem{Bianchi1903}
L.~{Bianchi}.
\newblock {Sulle superficie a linee di curvatura isoterme}.
\newblock {\em {Rom. Acc. L. Rend. (5)}}, 12(2):511--520, 1903.

\bibitem{DiscreteIsothermic96}
A.~Bobenko and U.~Pinkall.
\newblock Discrete isothermic surfaces.
\newblock {\em J. Reine Angew. Math.}, 475:187--208, 1996.

\bibitem{Bob_CMC1991}
A.~I. Bobenko.
\newblock Surfaces of constant mean curvature and integrable equations.
\newblock {\em Uspekhi Mat. Nauk}, 46(4(280)):3--42, 192, 1991.
\newblock English translation: \emph{Russian Math. Surveys} 46(4):1--45, 1991.

\bibitem{Bob_Bonnet2008}
A.~I. Bobenko.
\newblock Exploring surfaces through methods from the theory of integrable
  systems: the {B}onnet problem.
\newblock In {\em Surveys on geometry and integrable systems}, volume~51 of
  {\em Adv. Stud. Pure Math.}, pages 1--53. Math. Soc. Japan, Tokyo, 2008.

\bibitem{BE_Painleve2000}
A.~I. Bobenko and U.~Eitner.
\newblock {\em Painlev\'{e} equations in the differential geometry of
  surfaces}, volume 1753 of {\em Lecture Notes in Mathematics}.
\newblock Springer-Verlag, Berlin, 2000.

\bibitem{short-isothermic-planar}
A.~I. Bobenko, T.~Hoffmann, and A.~O. Sageman-Furnas.
\newblock Isothermic tori with one family of planar curvature lines and area
  constrained hyperbolic elastica, 2023.
\newblock arXiv:2312.14956 [math.DG].

\bibitem{ddgAMS2008}
A.~I. Bobenko and Y.~B. Suris.
\newblock {\em Discrete Differential Geometry. Integrable Structure}, volume~98
  of {\em Grad. Stud. in Math.}
\newblock AMS, Providence, RI, 2008.

\bibitem{Bonnet}
O.~Bonnet.
\newblock {\em M\'{e}moire sur la th\'{e}orie des surfaces applicables sur une
  surface don\'{e}e}, pages 72--92.
\newblock Gauthier-Villars, Paris, 1867.

\bibitem{Bryant2001}
R.~L. Bryant.
\newblock On surfaces with prescribed shape operator.
\newblock {\em Results Math.}, 40(1-4):88--121, 2001.
\newblock Dedicated to Shiing-Shen Chern on his 90th birthday.

\bibitem{BFLPP}
F.~E. Burstall, D.~Ferus, K.~Leschke, F.~Pedit, and U.~Pinkall.
\newblock {\em Conformal geometry of surfaces in {${\it S}^4$} and
  quaternions}, volume 1772 of {\em Lecture Notes in Mathematics}.
\newblock Springer-Verlag, Berlin, 2002.

\bibitem{Car1942}
E.~Cartan.
\newblock Sur les couples de surfaces applicables avec conservation des
  courbures principales.
\newblock {\em Bull. Sci. Math. (2)}, 66:55--72, 74--85, 1942.

\bibitem{Cartan3}
E.~Cartan.
\newblock Les surfaces qui admettent une seconde forme fondamentale donn\'{e}e.
\newblock {\em Bull. Sc. Math.}, 67:8--32, 1943.

\bibitem{Cartan4}
E.~Cartan.
\newblock {\em Es syst\`{e}mes diff\`{e}rentiels ext\`{e}rieurs et leurs
  applications g\`{e}ometriques}, volume 994 of {\em Actualit\`{e}s scient. et
  ind.}
\newblock Hermann, Paris, 1945.

\bibitem{Che1985}
S.~S. Chern.
\newblock Deformation of surfaces preserving principal curvatures.
\newblock In {\em Differential geometry and complex analysis}, pages 155--163.
  Springer, Berlin, 1985.

\bibitem{CohnVossen1927}
S.~Cohn-Vossen.
\newblock Zwei {S{\"a}tze} {\"u}ber die {Starrheit} der {Eifl{\"a}chen}.
\newblock {\em Nachr. Ges. Wiss. G{\"o}ttingen, Math.-Phys. Kl.},
  1927:125--134, 1927.

\bibitem{CohnVossen1929}
S.~Cohn-Vossen.
\newblock Unstarre geschlossene {Fl{\"a}chen}.
\newblock {\em Math. Ann.}, 102:10--29, 1929.

\bibitem{DDGGlimpse}
K.~Crane and M.~Wardetzky.
\newblock A glimpse into discrete differential geometry.
\newblock {\em Notices Amer. Math. Soc.}, 64(10):1153--1159, 2017.

\bibitem{Darboux1883}
G.~Darboux.
\newblock D\'etermination d'une classe particuli\`ere de surfaces \`a lignes de
  courbure planes dans un syst\`eme et isothermes.
\newblock {\em Bulletin des Sciences Math\'ematiques et Astronomiques}, 2e
  s{\'e}rie, 7(1):257--276, 1883.

\bibitem{Darboux1896}
G.~Darboux.
\newblock Surfaces isothermiques a lignes de courbure planes.
\newblock In {\em Le\c{c}ons sur la th\'eorie g\'en\'erale des surfaces et les
  applications g\'eom\'etriques du calcul infinit\'esimal}, volume~IV,
  chapter~X, pages 217--238. Gauthier-Villars, Paris, 1887--1896.

\bibitem{doCarmoBook}
M.~P. do~Carmo.
\newblock {\em Differential geometry of curves \& surfaces}.
\newblock Dover Publications, Inc., Mineola, NY, second edition, 2016.

\bibitem{DPW}
J.~Dorfmeister, F.~Pedit, and H.~Wu.
\newblock Weierstrass type representation of harmonic maps into symmetric
  spaces.
\newblock {\em Comm. Anal. Geom.}, 6(4):633--668, 1998.

\bibitem{Gromov2017}
M.~Gromov.
\newblock Geometric, algebraic, and analytic descendants of {N}ash isometric
  embedding theorems.
\newblock {\em Bull. Amer. Math. Soc. (N.S.)}, 54(2):173--245, 2017.

\bibitem{HanHongBook}
Q.~Han and J.-X. Hong.
\newblock {\em Isometric embedding of {R}iemannian manifolds in {E}uclidean
  spaces}, volume 130 of {\em Mathematical Surveys and Monographs}.
\newblock American Mathematical Society, Providence, RI, 2006.

\bibitem{Haz1887}
J.~Hazzidakis.
\newblock Biegung mit {E}rhaltung der {H}auptkr\"{u}mmungsradien.
\newblock {\em J. reine angew. Math.}, 117:42--56, 1887.

\bibitem{Hitchin1990}
N.~J. Hitchin.
\newblock Harmonic maps from a {$2$}-torus to the {$3$}-sphere.
\newblock {\em J. Differential Geom.}, 31(3):627--710, 1990.

\bibitem{Hoffman-Karcher}
D.~Hoffman and H.~Karcher.
\newblock Complete embedded minimal surfaces of finite total curvature.
\newblock In {\em Geometry V}, volume~90 of {\em Enciclodedia of Math.
  Sciences, EMS}, pages 5--93. Springer, 1997.

\bibitem{HoffmanTraizetWhite2016}
D.~Hoffman, M.~Traizet, and B.~White.
\newblock Helicoidal minimal surfaces of prescribed genus.
\newblock {\em Acta Math.}, 216(2):217--323, 2016.

\bibitem{HSFW17}
T.~Hoffmann, A.~O. Sageman-Furnas, and M.~Wardetzky.
\newblock A discrete parametrized surface theory in {${\Bbb R}^3$}.
\newblock {\em Int. Math. Res. Not.}, 2017(14):4217--4258, 2017.

\bibitem{HSFW24}
T.~Hoffmann, A.~O. Sageman-Furnas, and M.~Wardetzky.
\newblock Discrete {B}onnet pairs and conformal nets, 2024.
\newblock \emph{in preparation}.

\bibitem{JMN2018}
G.~R. Jensen, E.~Musso, and L.~Nicolodi.
\newblock Compact surfaces with no {B}onnet mate.
\newblock {\em J. Geom. Anal.}, 28(3):2644--2652, 2018.

\bibitem{KPP}
G.~Kamberov, F.~Pedit, and U.~Pinkall.
\newblock Bonnet pairs and isothermic surfaces.
\newblock {\em Duke Math. J.}, 92(3):637--644, 1998.

\bibitem{Kapouleas1990}
N.~Kapouleas.
\newblock Complete constant mean curvature surfaces in {E}uclidean three-space.
\newblock {\em Ann. of Math. (2)}, 131(2):239--330, 1990.

\bibitem{KorevaarKusner93}
N.~Korevaar and R.~Kusner.
\newblock The global structure of constant mean curvature surfaces.
\newblock {\em Invent. Math.}, 114(2):311--332, 1993.

\bibitem{LawTri}
H.~B. Lawson, Jr. and R.~d.~A. Tribuzy.
\newblock On the mean curvature function for compact surfaces.
\newblock {\em J. Differential Geometry}, 16(2):179--183, 1981.

\bibitem{PS1989}
U.~Pinkall and I.~Sterling.
\newblock On the classification of constant mean curvature tori.
\newblock {\em Ann. of Math. (2)}, 130(2):407--451, 1989.

\bibitem{POTTMANN2015145}
H.~Pottmann, M.~Eigensatz, A.~Vaxman, and J.~Wallner.
\newblock Architectural geometry.
\newblock {\em Computers \& Graphics}, 47:145--164, 2015.

\bibitem{RH1990}
I.~M. Roussos and G.~E. Hern\'{a}ndez.
\newblock On the number of distinct isometric immersions of a {R}iemannian
  surface into {${\bf R}^3$} with given mean curvature.
\newblock {\em Amer. J. Math.}, 112(1):71--85, 1990.

\bibitem{Sab2010}
I.~K. Sabitov.
\newblock A solution of the problem of {B}onnet pairs.
\newblock {\em Dokl. Akad. Nauk}, 434(2):164--167, 2010.

\bibitem{Sab2012}
I.~K. Sabitov.
\newblock Isometric surfaces with common mean curvature and the problem of
  {B}onnet pairs.
\newblock {\em Mat. Sb.}, 203(1):115--158, 2012.

\bibitem{SpivakVol5}
M.~Spivak.
\newblock {\em A comprehensive introduction to differential geometry. {V}ol.
  {V}}.
\newblock Publish or Perish, Inc., Wilmington, DE, third edition, 1999.

\bibitem{Traizet2020}
M.~Traizet.
\newblock Construction of constant mean curvature {$n$}-noids using the {DPW}
  method.
\newblock {\em J. Reine Angew. Math.}, 763:223--249, 2020.

\bibitem{whittaker_watson_1996}
E.~T. Whittaker and G.~N. Watson.
\newblock {\em A Course of Modern Analysis}.
\newblock Cambridge Mathematical Library. Cambridge University Press, 4
  edition, 1996.

\bibitem{Mathematica}
{Wolfram Research Inc}.
\newblock Mathematica, {V}ersion 12.3.1.
\newblock Champaign, IL, 2021.

\end{thebibliography}

\end{document}